\newtheorem{theorem}{Theorem}[section]
\newtheorem{lemma}[theorem]{Lemma}
\crefname{lemma}{Lemma}{Lemmas}
\newtheorem{proposition}[theorem]{Proposition}
\newtheorem{corollary}[theorem]{Corollary}
\theoremstyle{definition}
\newtheorem{definition}[theorem]{Definition}
\theoremstyle{remark}
\newtheorem{remark}[theorem]{Remark}
\newtheorem{claim}{Claim}
\numberwithin{equation}{section}
\newcommand{\R}{\mathbb{R}}
\newcommand{\N}{\mathbb{N}}
\newcommand{\Z}{\mathbb{Z}}
\DeclareMathOperator{\divr}{div}
\DeclareMathOperator{\supp}{supp}
\DeclareMathOperator{\diam}{diam}
\def\logoversion{squarelogo}
\def\degreedate#1{\gdef\@degreedate{#1}}
\def\degree#1{\gdef\@degree{#1}}
\def\college#1{\gdef\@college{#1}}
\title[the Navier-Stokes equations]
      {Partially regular weak solutions of the stationary Navier-Stokes equations in dimension 6}
\author[Bian Wu]{Bian Wu}
\email{bian.wu@math.ethz.ch}
\date{\today}
\begin{document}

\begin{abstract}
By using defect measures, we prove the existence of partially regular weak solutions to the stationary Navier-Stokes equations with external force $f \in L_{\text{loc}}^q \cap L^{3/2}, q>3$ in general open subdomains of $\R^6$. These weak solutions satisfy certain local energy estimates and we estimate the size of their singular sets in terms of Hausdorff measures. We also prove the defect measures vanish under a smallness condition, in contrast to the nonstationary Navier-Stokes equations in $\R^4 \times [0,\infty[$.
\end{abstract}

\maketitle

\section{Introduction}

\subsection{Main result}
This is a companion article of \cite{wu2021partially} where we prove the existence of a partially regular weak solutions to the nonstationary Navier-Stokes equations in $\R^4 \times [0,\infty[$, i.e.
\begin{equation} \label{navierstokes}
  \begin{split}
  \partial_t u - \Delta u + (u \cdot \nabla) u + \nabla p &= f, \\
  \divr u &= 0.
  \end{split}
\end{equation}
\par

In space dimensions $n \geq 2$, the existence of weak solutions to \eqref{navierstokes} in the whole space and in arbitrary open domains was proved by Leray \cite{leray1934mouvement} and Hopf \cite{hopf1950anfangswertaufgabe}, respectively in 1934 and 1950. After Scheffer' works \cite{scheffer1977hausdorff,scheffer1978navier,scheffer1980navier} on the partial regularity theory, Caffarelli, Kohn and Nirenberg \cite{caffarelli1982partial} proved the existence of suitable weak solutions and the fact that $1$-dimensional parabolic Hausdorff measure of the singular set of these solutions is zero in $\R^3 \times [0,\infty[$. The suitable weak solutions satisfy a local energy inequality which allows local analysis. In $\R^4 \times [0,\infty[$, a compactness property which is crucial for proving the existence of suitable weak solutions is missing. In \cite{wu2021partially}, we overcome it by introducing defect measures and incorporating it into energy estimates.\par

For the stationary Navier-Stokes equations
\begin{equation} \label{sta_navierstokes}
  \begin{split}
  - \Delta u + (u \cdot \nabla) u + \nabla p &= f,\\
  \divr u &= 0,
  \end{split}
\end{equation} 
Struwe \cite{struwe1988partial} proved partial regularity result in $\R^5$ in 1982. Later, assuming the existence of suitable weak solutions, Dong and Du \cite{dong2007partial} proved similar results for the nonstationary equations in $\R^4 \times [0,\infty[$, and Dong and Strain \cite{dong2012partial} adapted this method to the stationary equations in $\R^6$. In this article, we prove the existence of weak solutions satisfying local energy estimates in $\R^6$, under very general assumptions on the external force $f$. We also prove these solutions are partially regular. The main theorem is the following.
\begin{theorem} \label{maintheorem}
Given a weakly solenoidal force $f \in L_{\text{loc}}^q(\R^6) \cap L^{3/2}(\R^6), q>3$, there exists a weak solution set $(u,p,\nu,\mu)$ for the stationary Navier-Stokes equations \eqref{sta_navierstokes} in $\R^6$ which satisfies local energy inequalities \eqref{local_energy_1} and \eqref{local_energy_2}. Here, $(u,p)$ is a weak solution of the stationary Navier-Stokes equations with $u \in \dot{H}(\R^6)$ and $p \in L^{3/2}(\R^6)$. Moreover, the singular set $S$ of $u$ satifies $\mathcal{H}^2(S) < C$ for some constant $C$ depending on $f$.
\end{theorem}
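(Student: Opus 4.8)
The plan is to transplant the defect-measure construction of the companion paper \cite{wu2021partially} to the stationary, higher-dimensional setting: build suitable approximate solutions, pass to a limit while recording the lost compactness in two defect measures, and then carry out a Caffarelli--Kohn--Nirenberg type $\varepsilon$-regularity analysis of the resulting local energy inequalities. First I would fix a standard mollifier $\rho_\varepsilon$ and, for each $\varepsilon>0$, solve the regularized system
\begin{equation*}
  -\Delta u_\varepsilon + \bigl((\rho_\varepsilon * u_\varepsilon)\cdot\nabla\bigr)u_\varepsilon + \nabla p_\varepsilon = f, \qquad \divr u_\varepsilon = 0 ;
\end{equation*}
for a fixed divergence-free drift this is a linear Stokes system, so $u_\varepsilon \in \dot{H}(\R^6)$ is produced by a standard Galerkin or Leray--Schauder fixed-point argument as in Leray's existence theorem, and the mollification together with $f \in L^q_{\text{loc}}$, $q>3$, makes $u_\varepsilon$ regular enough that all integrations by parts below are legitimate.

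Then I would extract uniform bounds: testing with $u_\varepsilon$ and using the Sobolev embedding $\dot{H}^1(\R^6) \hookrightarrow L^3(\R^6)$ together with $f \in L^{3/2}$ gives $\|\nabla u_\varepsilon\|_{L^2(\R^6)} \le C \|f\|_{L^{3/2}(\R^6)}$, while the Calder\'on--Zygmund estimate applied to $-\Delta p_\varepsilon = \partial_i\partial_j\bigl((\rho_\varepsilon*u_\varepsilon)_i (u_\varepsilon)_j\bigr)$ (using that $f$ is weakly solenoidal) gives $\|p_\varepsilon\|_{L^{3/2}(\R^6)} \le C\|f\|_{L^{3/2}(\R^6)}^2$; hence $|\nabla u_\varepsilon|^2$, $|u_\varepsilon|^3$, $|p_\varepsilon|^{3/2}$ are uniformly bounded in $L^1(\R^6)$. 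Along a subsequence $u_\varepsilon \rightharpoonup u$ in $\dot{H}$ and, by Rellich plus interpolation with the $L^3$ bound, $u_\varepsilon \to u$ strongly in $L^s_{\text{loc}}$ for every $s < 3$, and $p_\varepsilon \rightharpoonup p$ in $L^{3/2}$; this already lets me pass to the limit in the weak formulation of \eqref{sta_navierstokes} (the quadratic term converges in $L^r_{\text{loc}}$, $r<3/2$), so $(u,p)$ is a weak solution with $u \in \dot{H}(\R^6)$, $p \in L^{3/2}(\R^6)$. What may fail is strong $L^3_{\text{loc}}$ convergence of $u_\varepsilon$, equivalently $L^1_{\text{loc}}$ convergence of $|\nabla u_\varepsilon|^2$ and of the flux densities; accordingly I set $\nu$ to be the weak-$*$ limit of the nonnegative measures $\bigl(|\nabla u_\varepsilon|^2 - |\nabla u|^2\bigr)\,dx$ and $\mu$ to be the defect measure collecting the concentration of the flux terms $|u_\varepsilon|^2(\rho_\varepsilon*u_\varepsilon)$ and $p_\varepsilon u_\varepsilon$; both have total mass $\le C(f)$. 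Testing the identity for $u_\varepsilon$ against $u_\varepsilon\phi$ with $0 \le \phi \in C_c^\infty$ and letting $\varepsilon \to 0$ then yields the local energy inequalities \eqref{local_energy_1} and \eqref{local_energy_2}, in which $\nu$ and $\mu$ appear on the side that makes them lossy.

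The partial regularity is a scaling-critical $\varepsilon$-regularity lemma followed by a covering argument. I would prove: there is $\varepsilon_0 > 0$ such that if
\begin{equation*}
  \frac{1}{r^{2}}\int_{B_r(x_0)} |\nabla u|^2\,dx \;+\; \frac{1}{r^{2}}\,\bigl(\nu + \lvert\mu\rvert\bigr)\bigl(B_r(x_0)\bigr) \;<\; \varepsilon_0
\end{equation*}
for some small $r$ with $B_r(x_0)\subset\R^6$, then $u$ is H\"older continuous near $x_0$. The proof is the usual excess-decay iteration: rescale to unit scale, split $p$ into a harmonic part handled by a mean-value inequality and a local part handled by Calder\'on--Zygmund, bound the lower-order densities $|u|^3$ and $|p|^{3/2}$ by powers of $\int_{B_r}|\nabla u|^2$ through Sobolev and interpolation (with tails controlled by the global $L^3$ and $L^{3/2}$ bounds, which are subcritical at small scales, exactly as $f$ is because $q>3$), and use the local energy inequalities to absorb the nonlinear and defect contributions, so that the smallness hypothesis is reproduced at a fixed fraction of the scale. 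Since $|\nabla u|^2\,dx + \nu + \lvert\mu\rvert$ is a finite Radon measure on $\R^6$ of total mass $\le C(f)$ and the singular set satisfies $S \subseteq \bigl\{x : \limsup_{r\to0} r^{-2}\bigl(|\nabla u|^2\,dx + \nu + \lvert\mu\rvert\bigr)\bigl(B_r(x)\bigr) \ge \varepsilon_0\bigr\}$, the standard density/Vitali covering estimate for measures gives $\mathcal{H}^2(S) \le C(f)$. (Without the defect measures the absolutely continuous part $|\nabla u|^2\,dx$ contributes nothing to this bound and one even gets $\mathcal{H}^2(S)=0$, as in \cite{dong2012partial}.)

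The main obstacle is the $\varepsilon$-regularity iteration in the presence of the defect measures: one has to check that $\nu$ and $\mu$ enter the local energy inequalities with the favourable sign and that, after testing against cutoff functions, they are dominated by the very quantity $r^{-2}\int_{B_r}|\nabla u|^2$ that drives the decay, so that smallness at one scale really does propagate to all smaller scales rather than being spoiled by the defect. Controlling the nonlocal pressure term at small scales while the flux defect $\mu$ is present is the technical heart of this step; it is also where dimension $6$, in which the natural energy $\int|\nabla u|^2$ is supercritical, forces the defect measures to be carried through the entire argument, in contrast to dimensions $\le 5$.
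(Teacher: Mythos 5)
Your skeleton (regularize with a mollified drift, extract weak limits, record lost compactness in defect measures, prove lossy local energy inequalities, run a CKN-type iteration plus a covering argument) is the same as the paper's, but two essential ingredients are missing, and they are precisely the points you defer to as ``the main obstacle.'' First, the paper's defect measures are not the flux defects you propose: they are $\mu=\lim|\nabla(v_k-u)|^2dx$ and $\nu=\lim|v_k-u|^3dx$ as in \eqref{dirac_measures}, and the whole regularity argument hinges on the structural information from Lions's concentration-compactness lemma, namely that $\nu$ is purely atomic, $\nu=\sum_j\nu^j\delta_{x^j}$, with the reverse Sobolev relation $C_s(\nu^j)^{2/3}\le\mu^j$ as in \eqref{lions_condition_1}--\eqref{lions_condition_2}. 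This is what gives the ``defect Sobolev inequality'' $G_c(x_0,r)\le C_1A_c^{3/2}(x_0,r)$ of \Cref{Sobolev_type_inequality}, and that superlinear power is exactly why smallness propagates in the iteration of \Cref{ckn1}: the cubic defect enters the local energy inequality \eqref{local_energy_1} linearly through $r^{-3}\nu(B_r)$ and must be beaten by the $3/2$ power of the gradient-level quantity; without this relation the scheme you describe does not close, and you give no substitute. Obtaining this structure also requires the step you omit: before applying Lions's lemma the paper excludes concentration at infinity by a variant of concentration compactness and a cutoff at radii $r_k\to\infty$ (\Cref{tightness}), producing a localized sequence $v_k$ that still satisfies the approximate local energy inequality.

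Second, your signed measure $\mu$ ``collecting the concentration of the flux terms $|u_\varepsilon|^2(\rho_\varepsilon*u_\varepsilon)$ and $p_\varepsilon u_\varepsilon$'' is not shown to exist as a meaningful limit, to have a favourable sign, or to be controllable at small scales; in the paper no such extra measure is introduced. Instead, the pressure--velocity concentration is \emph{proved} to be dominated by the same $\nu$: localizing $-\Delta p_k=\partial_i\partial_j(u_k^iu_k^j)$, splitting $p_k\psi$ into four pieces, and using Calder\'on--Zygmund bounds plus compact Sobolev embeddings, one gets $\limsup_k\int\zeta|v_k(p_k-\gamma)-u(p-\gamma)|dx\le 2\int\zeta\,d\nu$ (\Cref{measure_convergence_2}); a similar convolution argument handles the cubic transport term with the mollified drift. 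These lemmas are what make \eqref{local_energy_1} and \eqref{local_energy_2} come out with only $\mu$ and $\nu$ on the right-hand side, and hence what makes the $\varepsilon$-regularity (\Cref{ckn1}, \Cref{ckn2}) and the $\mathcal{H}^2$ covering bound go through; as written, your proposal asserts rather than proves this control, so the partial regularity part has a genuine gap. (Minor points: the smallness quantity in the paper's first $\varepsilon$-regularity step is $r^{-3}\int_{B_r}(|u|^3dx+d\nu)+r^{-3}\int_{B_r}|p|^{3/2}dx$ together with a condition on $f$, not only the gradient excess; and the parenthetical claim that one gets $\mathcal{H}^2(S)=0$ is not part of the theorem nor justified here.)
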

The definitions of weak solution sets and the singular sets are given in \Cref{weaksolutionset} and \Cref{singularset}. Although we state \Cref{maintheorem} for $\R^6$, the nature of our analysis is local and it has almost no requirement on the domain. \par

If the spatial domain and the force $f$ are sufficiently regular, the existence of a regular solution is known in many cases, while its uniqueness is open. In the dimensions $2$, $3$ and $4$, we refer to Galdi \cite{galdi2011introduction} for the existence of regular solutions. In dimension $5$, the existence of regular solutions was independently proved by Struwe \cite{struwe1995regular} in  $\R^5$ and by Frehse, R{\u{u}}{\v{z}}i{\v{c}}ka in \cite{frehse1995existence} in $\R^5/\Z^5$. For $f \in L^{\infty}$ and bounded subdomains of $\R^6$, Frehse and R{\u{u}}{\v{z}}i{\v{c}}ka proved the existence of regular solutions in \cite{frehse1994regularity1} and \cite{frehse1996existence}. They showed there exist weak solutions $(u,p)$ such that $({|u|}^2+2p)_+$ is bounded in any compact subdomain, based on their earlier work \cite{frehse1994regularity}. Struwe's method or Frehse, R{\u{u}}{\v{z}}i{\v{c}}ka's method use global peoperties such as maximum principle or Green function of the domain, thus it relies on the specific structures of the equations in certain domains and the force $f$. In contrast, our method is completely different and based on local analysis, and can be applied to more general problem like nonstationary Navier-Stokes equations \cite{wu2021partially} and incompressible magneto-hydrodynamic equations. \par

As we mentioned in \cite{wu2021partially}, similar to the nonstationary Navier-Stokes equations in $\R^4 \times [0,\infty[$, the stationary equations in $\R^6$ is critcial since approximation solutions are not relatively compact in $L^3$, so it is subtle to deduce local energy inequality for the limiting solution. We overcome this by proving slightly weaker local energy inequalities containing concentration measures. These concentration measures account for the loss of compactness in $L^3$ and appear naturally in the construction of our weak solutions. For this purpose, we introduce a new notion of generalized solutions to incorporate these measures, namely weak solutions sets defined in \Cref{weaksolutionset}. These local energy inequalities are sufficient to give all nice partial regularity results analogue to those obtained by Caffarelli, Kohn and Nirenberg in their celebrated paper \cite{caffarelli1982partial} or those in our previous work \cite{wu2021partially}. \par

\subsection{Comparison with nonstationary setting}
On the other hand, we also find the stationary equations in $\R^6$ is different from the nonstationary equations in $\R^4 \times [0,\infty[$ when studying the concentration phenomena in approximation procedure. The first main difference shows up when proving the compactness of the concentration measures in \Cref{tightness}. In nonstationary setting \cite{wu2021partially}, we use time variable to control the vanishing at infinity. Here, we need to localize the solution to exclude the mass concentrating at infinity.

The second difference is the vanishing of the concentration measures. Although one dimension in time counts for two dimensions in space, the weak solutions are less regular in time than in space. As a result, the concentration measures vanish under a local smallness condition in the stationary setting, i.e.
\begin{corollary} \label{ckn1_scaled}
Given a weakly solenoidal force $f \in L_{\text{loc}}^q(\R^6) \cap L^{3/2}(\R^6), q>3$, there exists absolute constants $\epsilon>0, \kappa>0$, and for any fixed $q > 3$, there exists another constant $C=C(q)$ satisfying the following property: if a weak solution set $(u,p,\nu,\mu)$ of the Navier-Stokes equations in $\R^6$ with a weakly solenoidal force $f \in L_{\text{loc}}^q(\R^6), q>3$ satisfies
\begin{equation} \label{ckn1_scaled_initial}
  \begin{split}
  r^{-3} \int_{B_r(x_0)} (|u|^3 dx + d\nu) + r^{-3} \int_{B_r(x_0)} |p|^{3/2} dx &\leq \epsilon \\
  r^{3q-6}\int_{B_r(x_0)} |f|^q dx &\leq \kappa
  \end{split}
\end{equation}
then $\|u\|_{L^{\infty}(B_{r/2}(x_0))} < C $ and $\nu|_{B_r(x_0)}= \mu|_{B_r(x_0)} = 0$.
\end{corollary}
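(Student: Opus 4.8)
\emph{Proof strategy.} Everything in the statement is invariant under translation and under the scaling of \eqref{sta_navierstokes}, so I would begin by reducing to $x_0=0$, $r=1$: if $(u,p,\nu,\mu)$ is a weak solution set then so is $(u_r,p_r,\nu_r,\mu_r)$, where $u_r(x)=r\,u(x_0+rx)$, $p_r(x)=r^2p(x_0+rx)$, $f_r(x)=r^3f(x_0+rx)$, and $\nu_r,\mu_r$ are the push-forwards of $\nu,\mu$ under $y\mapsto(y-x_0)/r$ reweighted by $r^{-3}$ and $r^{-2}$ respectively. Since $|u|^3$, $|p|^{3/2}$, $d\nu$ carry scaling weight $3-n=-3$, since $|\nabla u|^2$ and $d\mu$ carry weight $4-n=-2$, and since $|f|^q$ carries weight $3q-n=3q-6$ (with $n=6$), the hypotheses \eqref{ckn1_scaled_initial} on $B_r(x_0)$ become the same bounds for $(u_r,p_r,\nu_r,\mu_r)$ on $B_1(0)$, and the conclusions for $u_r$ on $B_1(0)$ scale back to those for $u$ on $B_r(x_0)$. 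So it is enough to exhibit absolute $\epsilon,\kappa>0$ and $C=C(q)$ such that $\int_{B_1}(|u|^3\,dx+d\nu)+\int_{B_1}|p|^{3/2}\,dx\le\epsilon$ and $\int_{B_1}|f|^q\,dx\le\kappa$ force $\|u\|_{L^\infty(B_{1/2})}\le C$ and $\nu|_{B_1}=\mu|_{B_1}=0$. (Only $f\in L^q_{\mathrm{loc}}$ enters; the global $L^{3/2}$ bound of \Cref{maintheorem} is not used.)

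For the $L^\infty$ estimate I would run the Caffarelli--Kohn--Nirenberg/Struwe $\epsilon$-regularity scheme adapted to weak solution sets. Testing \eqref{local_energy_1}, \eqref{local_energy_2} with a cutoff that is $1$ on $B_{3/4}$ and supported in $B_1$, and bounding the resulting terms by Hölder, the critical embedding $\dot H^1(\R^6)\hookrightarrow L^3(\R^6)$, and the fact that $q>3$ makes the force term lower order, one first upgrades the smallness of $\int_{B_1}(|u|^3+d\nu)+\int_{B_1}|p|^{3/2}$ to smallness of the full scale-invariant energy $\int_{B_{3/4}}|\nabla u|^2+\mu(B_{3/4})$. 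One then proves a Campanato-type decay estimate: for $y\in B_{1/2}$, $\rho$ small, and the scale-invariant excess
\[
  E(y,\rho)=\rho^{-3}\!\!\int_{B_\rho(y)}\!\!(|u|^3\,dx+d\nu)+\rho^{-3}\!\!\int_{B_\rho(y)}\!\!|p-\bar p_{y,\rho}|^{3/2}\,dx+\rho^{-2}\!\!\int_{B_\rho(y)}\!\!|\nabla u|^2\,dx+\rho^{-2}\mu(B_\rho(y)),
\]
there are $\theta\in(0,\tfrac12)$ and $\delta,\gamma>0$ with $E(y,\theta\rho)\le C_1\theta\,E(y,\rho)+C_1\theta^{-\gamma}\bigl(E(y,\rho)^{1+\delta}+(\rho^{3q-6}\int_{B_\rho(y)}|f|^q\,dx)^{1/q}\bigr)$; here the $\nabla u$- and $\mu$-pieces at scale $\theta\rho$ come from the local energy inequalities on $B_\rho(y)$ (their linear part made $\lesssim\theta E$ by subtracting means and using Poincar\'e), the superlinear term from trading powers of $\|u\|_{L^3}$ and $\|p-\bar p\|_{L^{3/2}}$ for the gradient via Poincar\'e--Sobolev, and the pressure piece from the localized Calder\'on--Zygmund estimate for $-\Delta p=\partial_i\partial_j(u_iu_j)$ (a singular-integral part controlled by $\|u\|_{L^3(B_\rho(y))}^2$ plus a harmonic part whose rescaled oscillation gains a factor $\theta$). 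Fixing $\theta$ with $C_1\theta\le\tfrac12$ and choosing $\epsilon,\kappa$ small, the estimate iterates --- the force term going to $0$ along the scales since $\rho^{3q-6}\int_{B_\rho(y)}|f|^q\to0$ as $\rho\to0$ for $f\in L^q_{\mathrm{loc}}$, $q>3$ --- so that $E(y,\rho)\to0$ uniformly on $B_{1/2}$. In particular $u$, $\nabla u$, $p$ lie in subcritical Morrey spaces there, and since $q>3=\tfrac n2$ is exactly the threshold for which $-\Delta u=-(u\cdot\nabla)u-\nabla p+f$ has bounded solutions, an interior elliptic (Morrey) bootstrap gives $\|u\|_{L^\infty(B_{1/2})}\le C(q)$.

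For the vanishing of the defect measures on the \emph{whole} ball $B_1$ --- which, unlike the $L^\infty$ bound, is not a consequence of $\epsilon$-regularity at the centre alone --- I would use the structure of the weak solution set. The measures $\nu,\mu$ are concentrated on the singular set of $u$ (so they vanish wherever $u$ is regular, in particular near $y=0$), $\nu$ is a countable sum of atoms by concentration compactness for $\dot H^1(\R^6)\hookrightarrow L^3(\R^6)$, and --- the point where the stationary equations are more rigid than the nonstationary ones in $\R^4\times[0,\infty[$ --- any concentration point of $\nu$, respectively $\mu$, carries at least a fixed amount $\epsilon_0>0$ of mass. Choosing $\epsilon<\epsilon_0$, the hypothesis $\int_{B_1}d\nu\le\epsilon$ then forbids any concentration point of $\nu$ in $B_1$, and similarly for $\mu$, giving $\nu|_{B_1}=\mu|_{B_1}=0$; rescaling finishes the proof.

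The genuine obstacles are, first, proving the decay estimate in a form that carries $\nu$ and $\mu$ along inside a closable excess --- one must check these measures enter the right-hand sides of \eqref{local_energy_1}, \eqref{local_energy_2} and of the pressure and interpolation estimates only with weights that are linear-with-$\theta$-gain or superlinear in $E$ --- and, second, establishing the minimal-mass quantization of the concentration points, which is exactly where the extra (elliptic, rather than merely weak-in-time) regularity of the stationary problem is exploited and where the contrast with the nonstationary setting appears. The pressure localization on dyadic balls is the main technical nuisance along the way; the scaling reduction and the final elliptic bootstrap are routine.
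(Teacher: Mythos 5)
The main gap is in your treatment of $\nu|_{B_r(x_0)}=\mu|_{B_r(x_0)}=0$. You route it through two claims you never establish: that the defect measures are concentrated on the singular set of $u$, and a minimal-mass quantization $\epsilon_0>0$ for concentration points. Neither is available. The measures $\mu,\nu$ are defects of the approximating sequence $\{v_k\}$, so regularity of the limit $u$ alone does not make them vanish near a point; and the quantization statement is essentially equivalent to the $\epsilon$-regularity you are in the middle of proving (its natural proof is to apply \Cref{ckn1} near an atom), so invoking it with ``choose $\epsilon<\epsilon_0$'' is circular, besides being flagged by you as an unresolved obstacle. The paper needs no such threshold: it runs the dyadic two-claim induction \eqref{GLclaim}--\eqref{ADclaim}, in which the measure quantities $G_c,A_c$ are carried along with $G,L,A,D$ at every scale $r_n=2^{-n}$ and every center $x'\in B_{1/2}(x_0)$, using the measure part of \Cref{Sobolev_type_inequality} ($G_c\le C_1A_c^{3/2}$, resting on Lions's condition \eqref{lions_condition_2}), the pressure decay of \Cref{pressure_estimate_1}, and the local energy inequality \eqref{local_energy_1} tested with the Dong--Strain cutoff $\phi\psi_n$ of \Cref{cutoff} decomposed as $\sum_k\varphi_k$, so that a different pressure mean is subtracted on each dyadic annulus. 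This yields $\nu(B_{r_n}(x'))\le\epsilon^{2/3}r_n^6$ and $\mu(B_{r_n}(x'))\le C_B\epsilon^{2/3}r_n^4$ for all $n$ and all $x'$, which excludes the atoms of \eqref{lions_condition_1} directly. Note also that your own excess $E(y,\rho)$ already contains $\rho^{-3}\nu(B_\rho(y))$ and $\rho^{-2}\mu(B_\rho(y))$, so if your decay iteration were actually proved, the same direct conclusion would follow and the quantization detour would be unnecessary; as written, the vanishing of the measures is simply not established.

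On the $L^\infty$ half: the scaling reduction is fine and matches \Cref{scale_invariance_sta} (the paper proves \Cref{ckn1} and the corollary together), and your Campanato-type scheme is a reorganization of the same CKN-type argument. But the decay inequality $E(y,\theta\rho)\le C_1\theta E(y,\rho)+C_1\theta^{-\gamma}(\dots)$ is precisely the step you leave unchecked: the local energy inequality controls $A(\theta\rho)+A_c(\theta\rho)+D(\theta\rho)$ by $\theta^{-\gamma}$ times quantities such as $G^{2/3}$, $G+G_c$ and $L$ at scale $\rho$, i.e. with bad $\theta$-powers and non-linear dependence on $E$, not with a small linear factor, and it is not clear the single-excess form closes. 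The paper avoids this by the interlocked induction (smallness of $G+G_c+L$ at all previous scales, via the annulus-by-annulus cutoff, gives smallness of $A+A_c+D$ at the next scale; \Cref{Sobolev_type_inequality} and \Cref{pressure_estimate_1} close the loop), and then gets the $L^\infty$ bound from the resulting Morrey estimate \eqref{ckn1_main_estimate}. So the $L^\infty$ part is plausible in spirit but its key estimate is asserted rather than proven, and the measure-vanishing part contains a genuine gap.
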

\noindent In the nonstationary case in $\R^4 \times [0,T]$, whether these concentration measures vanish locally or not is not known. This is because the concentration measures in stationary case have more structures. They exist in the form of countably many atoms. For the nonstationary equations in $\R^4 \times [0,\infty[$, these measures may concentrate in a set of finite $1$-dimensional Hausdorff measure, for example, a curve. \par

This rest of this paper consists of two sections. In \Cref{section_local_energy}, we prove the existence of weak solution sets and local energy estimates. In \Cref{section_partial_regularity} we use the local energy estimates to obtain partial regularity results for weak solution sets.

\subsection{Acknowledgements} I would like to thank my advisor Michael Struwe for reading my draft and giving me helpful feedback.


\section{Existence of weak solution sets} \label{section_local_energy}

To construct generalized solutions to the Navier-Stokes equations, we start with constructing solutions to the regularized Navier-Stokes equations.

\begin{lemma} \label{preparation_existence}
Let $\{\chi_k\}_{k \in \N}$ be a sequence of standard mollifiers and $f \in L^{3/2}(\R^6)$, then we have a sequence $\{(u_k,p_k)\}_{k \in \N} \subset \dot{H}^1(\R^6) \times L^{3/2}(\R^6)$ such that $(u_k,p_k)$ is a distributional solution to the regularized stationary Navier-Stokes equations
\begin{equation} \label{reg_sta_navierstokes}
  \begin{split}
  - \Delta u_k + [(\chi_k * u_k) \cdot \nabla] u_k + \nabla p_k &= f, \\
  \divr u_k &= 0.
  \end{split},
\end{equation}
Here, $\{u_k\}_{k \in \N}$ is uniformly bounded in $\dot{H}^1(\R^6)$ and $\{p_k\}_{k \in \N}$ is uniformly bounded in $L^{3/2}(\R^6)$, thereby we can pass to the weak limit, i.e.
\begin{equation} \label{preparation_existence_eq0}
  \begin{split}
  u_k \rightarrow u \quad &\text{weakly in} \quad \dot{H}^1(\R^6), \\
  p_k \rightarrow p \quad &\text{weakly in} \quad L^{3/2}(\R^6).
  \end{split}
\end{equation}
Moreover, this sequence satisfies local energy inequality, i.e. for any $\phi \in C_c^{\infty}$ there holds
\begin{equation} \label{local_energy_seq}
  \begin{split}
  2\int_{\R^6} |\nabla u_k|^2 \phi dx 
  \leq& \int_{\R^6} |u_k|^2 |\Delta \phi| dx + \int_{\R^6} |u_k|^2 (\tilde{u}_k \cdot \nabla) \phi dx \\
      &+ \int_{\R^6} 2 p_k (u_k \cdot \nabla) \phi dx + \int_{\R^6} f \cdot u_k \phi dx,
  \end{split}
\end{equation}
where $\tilde{u}_k := \chi_k * u_k$.
\end{lemma}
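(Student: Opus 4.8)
The plan is to prove \Cref{preparation_existence} in three stages: first construct the regularized solutions $(u_k, p_k)$, then establish the uniform bounds, and finally derive the local energy inequality \eqref{local_energy_seq}.

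\textbf{Construction via Galerkin approximation.} For fixed $k$, the mollified convection term $(\chi_k * u_k) \cdot \nabla) u_k$ is the key simplification: since $\chi_k * u_k$ is smooth and the map $v \mapsto (\chi_k * v) \cdot \nabla) v$ gains regularity in the drift, one obtains good compactness for the fixed-$k$ problem. I would set up a Galerkin scheme in the space $\dot{H}^1(\R^6)$ of divergence-free vector fields (or work on a large ball and exhaust $\R^6$, since $f \in L^{3/2}$ controls behavior at infinity), seeking $u_k$ in the span of finitely many basis fields. The nonlinear finite-dimensional system is solved by a Brouwer fixed-point / topological degree argument, using the crucial cancellation $\int (\chi_k * u_k) \cdot \nabla) u_k \cdot u_k \, dx = 0$ (valid because $\divr(\chi_k * u_k) = 0$), which gives the a priori bound $\|\nabla u_k\|_{L^2}^2 \leq \langle f, u_k \rangle \leq \|f\|_{L^{6/5}} \|\nabla u_k\|_{L^2}$ — here one uses Sobolev embedding $\dot H^1(\R^6) \hookrightarrow L^3(\R^6)$ and that $L^{3/2} \cap L^2 \subset L^{6/5}$ interpolation-style, or more directly tests against $u_k$ after noting $f$ is weakly solenoidal so the pressure drops out. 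The pressure $p_k$ is then recovered from the equation via the divergence: $\Delta p_k = -\divr \divr((\chi_k * u_k) \otimes u_k)$, and Calderón–Zygmund theory gives $p_k \in L^{3/2}$ with $\|p_k\|_{L^{3/2}} \lesssim \|u_k\|_{L^3}^2 \lesssim \|\nabla u_k\|_{L^2}^2$ (again using $\dot H^1 \hookrightarrow L^3$ in dimension 6).

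\textbf{Uniform bounds and weak limits.} The energy estimate above is uniform in $k$ because $\|f\|_{L^{6/5}}$ does not depend on $k$; this gives $\{u_k\}$ bounded in $\dot H^1(\R^6)$, hence in $L^3(\R^6)$, and then the Calderón–Zygmund bound gives $\{p_k\}$ bounded in $L^{3/2}(\R^6)$. Weak compactness of bounded sets in these reflexive spaces yields a subsequence with \eqref{preparation_existence_eq0}.

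\textbf{The local energy inequality.} This is the main point and the step I expect to be most delicate. Formally, one tests the momentum equation in \eqref{reg_sta_navierstokes} against $u_k \phi$ for $\phi \in C_c^\infty$, $\phi \geq 0$. The viscous term gives $\int \nabla u_k : \nabla(u_k \phi) = \int |\nabla u_k|^2 \phi + \tfrac12 \int \nabla(|u_k|^2) \cdot \nabla \phi = \int |\nabla u_k|^2 \phi - \tfrac12 \int |u_k|^2 \Delta \phi$; the convection term gives, using $\divr \tilde u_k = 0$, $\int (\tilde u_k \cdot \nabla) u_k \cdot u_k \phi = -\tfrac12 \int |u_k|^2 (\tilde u_k \cdot \nabla)\phi$; the pressure term gives $\int \nabla p_k \cdot u_k \phi = -\int p_k (u_k \cdot \nabla)\phi$ (using $\divr u_k = 0$); and the force term gives $\int f \cdot u_k \phi$. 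Rearranging yields \eqref{local_energy_seq} — and here it is actually an \emph{equality} at the regularized level, so the inequality (with absolute values inserted on the $|u_k|^2 \Delta\phi$ term and sign-relaxations) holds a fortiori; the reason it is stated as an inequality is presumably to have a form stable under the later passage to the limit. The obstacle is justifying that $u_k \phi$ is an admissible test function: since $u_k \in \dot H^1$ and $\phi$ has compact support, $u_k \phi \in H^1$ with compact support, and $\chi_k * u_k$ is smooth, so each integral above is finite — one checks $|u_k|^2 \tilde u_k \in L^1_{loc}$ via $u_k \in L^3_{loc}$ and $\tilde u_k \in L^\infty_{loc}$, and $p_k u_k \in L^1_{loc}$ via Hölder with exponents $3/2$ and $3$. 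A density/approximation argument (mollifying $u_k$ or truncating) makes the integration by parts rigorous. I would carry out the testing against a mollified version of $u_k \phi$ and pass to the limit in the mollification, which is routine given the integrability just noted.
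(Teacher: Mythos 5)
Your proposal follows essentially the same route as the paper: a Galerkin construction for the regularized system (using the cancellation of the mollified convection term), the pressure bound via its Laplace equation and Calder\'on--Zygmund theory, and the local energy inequality obtained by testing the equation with $u_k\phi$, where at the regularized level it is in fact an equality up to inserting absolute values. The only substantive difference is how admissibility of the test function $u_k\phi$ is justified: the paper invokes local regularity, $u_k \in W^{2,3/2}_{\mathrm{loc}}(\R^6)$, whereas you propose a density/mollification argument. Your route does work, and cleanly so, because mollifying $u_k$ preserves $\divr u_k = 0$; hence in the weak formulation the pressure only ever pairs $p_k \in L^{3/2}$ against $(u_k * \rho_j)\cdot\nabla\phi$, which converges in $L^3(\supp\phi)$, and all other terms converge using $\nabla u_k \in L^2$, $\tilde u_k \in L^\infty$, $u_k \in L^3$.

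One step is wrong as written, though harmlessly so: the bound $\langle f, u_k\rangle \le \|f\|_{L^{6/5}}\|\nabla u_k\|_{L^2}$ together with the claimed inclusion $L^{3/2}\cap L^2 \subset L^{6/5}$. The exponent $6/5$ is the dual Sobolev exponent in dimension $3$; in $\R^6$ the relevant embedding is $\dot H^1(\R^6)\hookrightarrow L^3(\R^6)$, whose dual exponent is $3/2$, and on a set of infinite measure $L^{3/2}\cap L^2$ embeds into $L^s$ only for $s\in[3/2,2]$, not for $s=6/5$. The correct (and simpler) estimate is $\langle f,u_k\rangle \le \|f\|_{L^{3/2}}\|u_k\|_{L^3} \le C\,\|f\|_{L^{3/2}}\|\nabla u_k\|_{L^2}$, which uses exactly the hypothesis $f\in L^{3/2}(\R^6)$ and the embedding you already cite; with this replacement your uniform $\dot H^1$ bound, and with it the rest of the argument, goes through unchanged.
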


\begin{proof}
The existence of weak solutions of the regularized Navier-Stokes equations follows from Galerkin method. For a standard argument of Galerkin method, we refer to Theorem 4.4 and Theorem 14.1 in \cite{robinson2016three}. Galerkin method also yields that $u_k$ is bounded in $\dot{H}^1(\R^6)$, thereby the boundedness of $p_k$ follows from the Laplace equation for $p_k$ and Calderon-Zygmund theory. Standard regularity theory shows $u_k \in W^{2,3/2}_{\text{loc}}(\R^6)$, hence $u_k \phi$ is an admissible test function. This gives the local energy inequality \eqref{local_energy_seq}.
\end{proof}

\begin{remark} \label{remark_general_domains}
This result also holds for general open Lipschitz domains $\Omega \subset \R^6$ for prescribed boundary data $u_0 \in L^s(\partial \Omega), s \in [1,\frac{5}{2})$. Because of the compact trace operator $\dot{H}^{1}(\Omega) \rightarrow L^s(\partial \Omega)$ for $s \in [1,\frac{5}{2})$, the boundary value of the weak limit $u$ coincides with the fixed boundary value of the approximation sequence. The rest of our argument is purely local and it has no requirement on the domain.
\end{remark}

\subsection{Concentration compactness}
To study possible concentration phenomena appearing in the local energy inequality \eqref{local_energy_seq}, we need a variant of the concentration-compactness lemma by Lions in \cite{lions1985concentration}.

\begin{lemma} \label{variant_cc}
Suppose $\{\mu_m\}_{m \in \N}$ is a sequence of finite Borel measures on $\R^n$ with
\[ \lim_{m \rightarrow \infty} \mu_m(\R^n) = 1. \]
Then up to a subsequence, one of the following conditions holds:
\begin{enumerate}[leftmargin=*]
  \item (Compactness) For any $\epsilon > 0$ there is a radius $r_0>0$ such that
  \[ \int_{B_{r_0}} d \mu_m \geq \mu_m(\R^n) - \epsilon. \]
  \item (Vanishing) For all $r>0$ there holds
  \[ \lim_{m \rightarrow \infty} \int_{B_r} d\mu_m = 0. \]
  \item (Dichotomy) There exists $\lambda \in (0,1)$, such that for any $\epsilon>0$, there is a radius $r_0 > 0$ with following property. Given any $r'>r_0$, the measures $\mu_m^1 := \mu_m|_{B_{r_0}}$ and $\mu_m^2 := \mu_m|_{\R^n \backslash B_{r'}}$ satisfy
  \[
  \begin{split}
  &0 \leq \mu_m^1 + \mu_m^2 \leq \mu_m, \\
  &\limsup_{m \rightarrow \infty} \Big( \Big|\lambda - \int_{\R^n} d\mu_m^1\Big| 
                                      + \Big|\mu_m(\R^n)-\lambda - \int_{\R^n} d\mu_m^2\Big| \Big) \leq 2\epsilon.
  \end{split}
  \]
\end{enumerate}
\end{lemma}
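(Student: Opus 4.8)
The plan is to adapt the classical argument of Lions \cite{lions1985concentration}, but since all three alternatives here are phrased in terms of balls centered at the origin, I would replace the usual Lévy concentration function by the \emph{radial concentration function}
\[ F_m(r) := \int_{B_r} d\mu_m = \mu_m(B_r), \qquad r > 0; \]
then no translation parameter and no covering argument are needed, and the lemma becomes essentially a statement about how much mass of $\mu_m$ escapes to infinity. Each $F_m$ is nondecreasing with $0 \le F_m \le \mu_m(\R^n)$, and $\sup_m \mu_m(\R^n) < \infty$ because $\mu_m(\R^n) \to 1$. First I would invoke Helly's selection theorem (or a diagonal argument over rational radii followed by monotone extension) to pass to a subsequence, not relabeled, along which $F_m(r) \to F(r)$ at every continuity point $r$ of some nondecreasing $F : [0,\infty) \to [0,1]$, and set $\lambda := \lim_{r\to\infty} F(r) = \sup_{r>0} F(r) \in [0,1]$. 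The trichotomy is then decided by the value of $\lambda$: $\lambda = 1$ will give Compactness, $\lambda = 0$ will give Vanishing, and $\lambda \in (0,1)$ will give Dichotomy.

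For Compactness, given $\epsilon > 0$ I would pick a continuity point $\rho$ of $F$ with $F(\rho) > 1 - \epsilon/2$; then $F_m(\rho) \to F(\rho)$ together with $\mu_m(\R^n) \to 1$ forces $F_m(\rho) \ge \mu_m(\R^n) - \epsilon$ for all large $m$, while the finitely many small indices are absorbed by choosing $r_m$ with $\mu_m(B_{r_m}) \ge \mu_m(\R^n) - \epsilon$ (possible since each $\mu_m$ is finite) and taking $r_0 := \max\{\rho, r_1, \dots\}$; monotonicity of $F_m$ then yields $\int_{B_{r_0}} d\mu_m \ge \mu_m(\R^n) - \epsilon$ for every $m$. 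For Vanishing, $\lambda = 0$ forces $F \equiv 0$, which is continuous everywhere, so $F_m(r) \to 0$ for all $r > 0$ — exactly the stated alternative.

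For Dichotomy, given $\epsilon > 0$ I would choose a continuity point $r_0$ of $F$ with $\lambda - \epsilon < F(r_0) \le \lambda$. For any $r' > r_0$, the sets $B_{r_0}$ and $\R^n \setminus B_{r'}$ are disjoint, so $\mu_m^1 := \mu_m|_{B_{r_0}}$ and $\mu_m^2 := \mu_m|_{\R^n \setminus B_{r'}}$ automatically satisfy $0 \le \mu_m^1 + \mu_m^2 \le \mu_m$; moreover $\mu_m^1(\R^n) = F_m(r_0)$ and $\mu_m^2(\R^n) = \mu_m(\R^n) - F_m(r')$, so the quantity to be bounded equals $|\lambda - F_m(r_0)| + |F_m(r') - \lambda|$. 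Since $F_m(r_0) \to F(r_0) \in (\lambda-\epsilon,\lambda]$, and since monotonicity of $F_m$ combined with convergence at continuity points straddling $r'$ gives $\lambda - \epsilon \le \liminf_m F_m(r') \le \limsup_m F_m(r') \le \lambda$, both terms have $\limsup_m$ at most $\epsilon$, yielding the bound $2\epsilon$ uniformly in $r' > r_0$.

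I expect no serious obstacle: the proof is a routine variant of the classical lemma. The only points needing care are the uniformity in $m$ in the Compactness alternative — where the finitely many small indices must be handled separately using finiteness of each $\mu_m$ — and, in the Dichotomy alternative, that the mass estimate is required for \emph{every} $r' > r_0$ rather than only at continuity points of $F$, which is dealt with by sandwiching $F_m(r')$ between its values at nearby continuity points via monotonicity.
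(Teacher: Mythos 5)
Your proposal is correct and follows essentially the same route as the paper: both use the radial concentration function $Q_m(r)=\mu_m(B_r)$, a Helly-type selection to a monotone limit $Q$, set $\lambda=\lim_{r\to\infty}Q(r)$, and split into the three cases $\lambda=0,1$, or $\lambda\in(0,1)$, with the same devices (enlarging $r_0$ to absorb finitely many small indices in the compactness case, and sandwiching $Q_m(r')$ between nearby convergence points in the dichotomy case). No substantive differences.
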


\begin{remark}
In calculus of variations, Lions's concentration compactness lemma, i.e. Lemma I.1 in \cite{lions1985concentration}, studies the concentration points in a measure-theoretic level. Using this, one can show, for instance, that any minimizing sequence for the Sobolev constant in the embedding $\dot{H}(\R^6) \hookrightarrow L^{3}(\R^6)$ is compact up to translation and dilation. However, unlike searching minimizers for Sobolev constant, one cannot translate or dilate the solutions of regularized equations \eqref{reg_sta_navierstokes} with different scaling factors, because this also scales the force $f$. Instead, we show the convergence of certain measures by excluding the part which concentrates at infinity, and a simple observation shows that the remaining part also converges weakly in the sense of measures.
\end{remark}

\begin{proof}[Proof of \Cref{variant_cc}]
For a nonegative measure $\mu$, we define its concentration function
\[ Q(r) := \int_{B_r} d \mu. \]
Let $\{Q_m\}_{m \in \N}$ be the concentration functions of $\{\mu_m\}_{m \in \N}$, then 
\[ \lim_{m \rightarrow \infty} \lim_{r \rightarrow \infty} Q_m(r) = 1. \]
Up to a subsequence, this sequence converges to a non-decreasing, non-negative bounded function $Q$ at almost every point. And we can make $Q$ continuous from the left, then we have
\[ Q(r) \leq \liminf_{m \rightarrow \infty} Q_m(r). \]
\par

Let $\lambda = \lim_{r \rightarrow \infty} Q(r) \in [0,1]$. If $\lambda = 0$, this is the case of vanishing.
\par

If $\lambda = 1$, for any $\epsilon > 0$, there exists $r_0 > 0$ with $Q(r_0) \in \big(1-\frac{\epsilon}{2}, 1+\frac{\epsilon}{2}\big)$. For sufficiently large $m_0 \in \N$, we have $Q_m(r_0) > \mu_m(\R^n)-\epsilon$ for any $m \geq m_0$. Choose $r_0$ larger, if necessary, then we have
\[ Q_m(r_0) = \int_{B_{r_0}} d \mu_m > \mu_m(\R^n) - \epsilon \quad \text{for all } m \in \N. \]
\par

If $\lambda \in (0,1)$, for any $\epsilon > 0$, we can choose $r_0 > 0$ such that
\[  \lim_{m \rightarrow \infty} Q_m(r) = Q(r) 
    \in \Big(\lambda-\frac{\epsilon}{2}, \lambda\Big] 
    \quad \text{for } r=r_0 \text{ and almost every } r>r_0. \]
Given any $r'>r_0$, there exists $m_0(\epsilon,r') \in \N$ such that
\[ Q_m(r_0),Q_m(r') \in (\lambda-\epsilon, \lambda+\epsilon) \quad \text{for all } m \geq m_0. \]
Let $\mu_m^1 = \mu_m|_{B_{r_0}}$ and $\mu_m^2 = \mu_m|_{\R^n \backslash B_{r'}}$. This gives the case dichotomy.
\end{proof}
\par

Now, using \Cref{variant_cc}, we can prove the tightness of the measures showing up in \eqref{local_energy_seq}, after splitting the part which concentrate at infinity.

\begin{proposition} \label{tightness}
Given $f \in L^{3/2}(\R^6)$, let $u$ be as in \Cref{preparation_existence}. There exists $\{v_k\}_{k \in \N} \subset \dot{H}(\R^6)$ such that $v_k \rightarrow u$ weakly in $\dot{H}(\R^6)$ and the measures $\{|\nabla v_k|^2dx\}_{k \in \N}$, $\{|v_k|^3dx\}_{k \in \N}$ are tight, i.e. they satisfy the compactness condition in \Cref{variant_cc}. Moreover, for any $\phi \in C_c^{\infty}(\R^6)$ there exists $k_0 \in \N$ such that for $k \geq k_0$,
\begin{equation} \label{v_local_energy_inequality}
    \begin{split}
    2\int_{\R^6} |\nabla v_k|^2 \phi dx \leq& \int_{\R^6} |v_k|^2 |\Delta \phi| dx
      + \int_{\R^6} |v_k|^2 (\tilde{v}_k \cdot \nabla) \phi dx \\
      &+ \int_{\R^6} p_k (v_k \cdot \nabla) \phi dx + \int_{\R^6} f v_k \phi dx .
    \end{split}
\end{equation}
\end{proposition}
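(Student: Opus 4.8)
The plan is to obtain $v_k$ by cutting $u_k$ off at a radius $\rho_k\to\infty$ chosen just large enough to retain the part of $u_k$ that ``stays put'' but not so large as to recapture the part whose mass runs off to spatial infinity: then $\{|\nabla v_k|^2dx\}$ and $\{|v_k|^3dx\}$ are tight by construction, while $v_k\rightharpoonup u$ because a localization at a diverging radius is invisible to the weak limit. Set $\mu_k:=(|\nabla u_k|^2+|u_k|^3)\,dx$; these are uniformly bounded by \Cref{preparation_existence}, so after passing to a subsequence (not relabelled) we may assume, as in the proof of \Cref{variant_cc}, that the concentration functions $Q_k(r):=\mu_k(B_r)$ converge off a null set $E^c\subset(0,\infty)$ to a bounded nondecreasing $Q$; put $\lambda:=\lim_{r\to\infty}Q(r)$ (so $\lambda=0$ in the vanishing alternative, $\lambda=\lim_k\mu_k(\R^6)$ in the compactness alternative, and $0<\lambda<\lim_k\mu_k(\R^6)$ in dichotomy — the construction below is uniform in all three). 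For each $j$ pick $r_j\in E$ with $2r_j\in E$, $r_j\ge j$, $Q(r_j)>\lambda-\tfrac1j$; since $Q_k(r_j)\to Q(r_j)$ and $Q_k(2r_j)\to Q(2r_j)$ there is a strictly increasing $(K_j)$ with $Q_k(r_j),Q_k(2r_j)\in(\lambda-\tfrac2j,\lambda+\tfrac1j)$ for $k\ge K_j$. Setting $\rho_k:=r_j$ for $K_j\le k<K_{j+1}$ (and $\rho_k:=r_1$ for $k<K_1$) yields $\rho_k\to\infty$ with
\[
 Q_k(\rho_k)\to\lambda,\qquad Q_k(2\rho_k)\to\lambda,\qquad\text{hence}\quad\mu_k\bigl(B_{2\rho_k}\setminus B_{\rho_k}\bigr)=Q_k(2\rho_k)-Q_k(\rho_k)\to 0 .
\]

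Now define $v_k:=u_k\,\psi(\cdot/\rho_k)$, where $\psi\in C_c^\infty(B_2)$, $0\le\psi\le1$, $\psi\equiv1$ on $B_1$, so $v_k\in\dot H^1(\R^6)$ with support in $B_{2\rho_k}$. (If $\dot H(\R^6)$ is meant to consist of solenoidal fields, subtract from $v_k$ a Bogovskii corrector of $\divr v_k=u_k\cdot\nabla\psi(\cdot/\rho_k)$ — which has vanishing mean on the annulus $B_{2\rho_k}\setminus B_{\rho_k}$ since $\divr u_k=0$ — supported in that annulus; by the scale-invariant Bogovskii estimate its $\dot H^1$-norm is bounded by the cut-off error below, hence $\to0$, and it is supported away from any fixed $\supp\phi$ once $k$ is large, so it affects nothing below.)

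It remains to verify three things. The \emph{cut-off error}: by H\"older and the critical embedding $\dot H^1(\R^6)\hookrightarrow L^3(\R^6)$,
\[
 \bigl\|\,u_k\,\nabla[\psi(\cdot/\rho_k)]\,\bigr\|_{L^2}^2\;\lesssim\;\rho_k^{-2}\Bigl(\int_{B_{2\rho_k}\setminus B_{\rho_k}}|u_k|^3\Bigr)^{2/3}\bigl|B_{2\rho_k}\bigr|^{1/3}\;\lesssim\;\bigl(\mu_k(B_{2\rho_k}\setminus B_{\rho_k})\bigr)^{2/3}\;\longrightarrow\;0,
\]
the powers of $\rho_k$ cancelling exactly in dimension $6$. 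Hence $\nabla v_k=\psi(\cdot/\rho_k)\,\nabla u_k+o(1)$ in $L^2$; since $\psi(\cdot/\rho_k)\,g\to g$ in $L^2$ for every $g\in L^2$ while $\nabla u_k\rightharpoonup\nabla u$ is bounded, $\nabla v_k\rightharpoonup\nabla u$, i.e.\ $v_k\rightharpoonup u$ in $\dot H^1(\R^6)$. \emph{Tightness}: given $\epsilon>0$ pick $r_0\in E$ with $Q(r_0)>\lambda-\epsilon$; all but finitely many $k$ satisfy $2\rho_k>r_0$, and for those
\[
 \int_{\R^6\setminus B_{r_0}}|\nabla v_k|^2\;\le\;2\,\mu_k\bigl(B_{2\rho_k}\setminus B_{r_0}\bigr)+o(1)\;=\;2\bigl(Q_k(2\rho_k)-Q_k(r_0)\bigr)+o(1)\;\longrightarrow\;2\bigl(\lambda-Q(r_0)\bigr)<2\epsilon ,
\]
while the finitely many remaining indices (each a fixed $\dot H^1$ function, those with $2\rho_k\le r_0$ contributing zero) are absorbed by enlarging $r_0$; since $\epsilon$ is arbitrary this is the compactness condition of \Cref{variant_cc}, and the identical estimate with $|v_k|^3\le|u_k|^3$ in place of $|\nabla v_k|^2$ handles $\{|v_k|^3dx\}$. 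The \emph{local energy inequality}: given $\phi$, take $k_0$ so large that $\rho_k$ exceeds $\diam(\supp\phi)$ plus the (vanishing) mollification scale of $\chi_k$ for all $k\ge k_0$; then $v_k\equiv u_k$ and $\tilde v_k\equiv\tilde u_k$ on a neighbourhood of $\supp\phi$, and since every term of \eqref{v_local_energy_inequality} carries a factor of $\phi$ or a derivative of $\phi$, \eqref{v_local_energy_inequality} is precisely \eqref{local_energy_seq} evaluated at $\phi$.

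The only genuinely delicate point is the choice of the cut-off scale $\rho_k$: it must diverge (to keep $v_k\rightharpoonup u$) yet remain below the scale at which the escaping mass concentrates, so that the annular leakage $\mu_k(B_{2\rho_k}\setminus B_{\rho_k})$ — which simultaneously controls the cut-off error and the tail bound — tends to $0$ and $Q_k(\rho_k)$ converges to the non-escaping mass $\lambda$ rather than to something larger. Reconciling these competing demands is exactly the job of the concentration functions and the diagonal extraction above; everything else is H\"older's inequality and the critical Sobolev embedding in dimension $6$.
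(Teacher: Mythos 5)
Your proposal is correct, and it reaches the statement by a genuinely different (and in some ways leaner) route than the paper. The paper runs its variant concentration--compactness lemma (\Cref{variant_cc}) twice -- first on $\{|\nabla u_k|^2dx\}$, splitting into the vanishing/compactness/dichotomy alternatives and cutting off at radii $r_k\to\infty$ in the dichotomy case, then again on $\{|v_k|^3dx\}$ for the already-truncated sequence -- and it obtains $v_k\rightharpoonup u$ through the auxiliary \Cref{vanishing_weak_limit} and \Cref{vanishing_weak_limit_L3} applied to $w_k=u_k(1-\varphi_k)$, while the local energy inequality is derived by testing the localized equation \eqref{localized_eq} with $v_k\phi$ and discarding the terms containing $1-\varphi_k$ or $\nabla\varphi_k$. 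You instead attach the single combined measure $(|\nabla u_k|^2+|u_k|^3)\,dx$, extract the cut-off radii $\rho_k$ directly from the concentration functions by a diagonal selection with $Q_k(\rho_k),Q_k(2\rho_k)\to\lambda$, and get tightness of both families and the weak convergence $v_k\rightharpoonup u$ in one stroke: the annular leakage $\mu_k(B_{2\rho_k}\setminus B_{\rho_k})\to0$ controls the commutator $u_k\nabla[\psi(\cdot/\rho_k)]$ via H\"older and the exact cancellation of powers of $\rho_k$ in dimension $6$ (this is where including $|u_k|^3$ in the combined measure pays off), and it simultaneously gives the tail bound outside $B_{r_0}$; the case distinction of \Cref{variant_cc} and the two auxiliary lemmas become unnecessary, and your treatment is uniform across vanishing, compactness and dichotomy, whereas the paper must handle them separately and repeat the truncation for the $L^3$ measures. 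Both proofs finish the local energy inequality the same way, from $v_k\equiv u_k$ and $\tilde v_k\equiv\tilde u_k$ near $\supp\phi$ for large $k$. One small caveat: your closing claim that \eqref{v_local_energy_inequality} is then ``precisely \eqref{local_energy_seq}'' is not literally exact, since \eqref{local_energy_seq} carries a factor $2$ on the (sign-indefinite) pressure term that \eqref{v_local_energy_inequality} does not; this mismatch of constants is already present inside the paper (its own derivation in the proof of \Cref{tightness} likewise does not reproduce the displayed coefficients), so it is a typo-level discrepancy rather than a gap in your argument -- testing the regularized equation with $u_k\phi$ on $\supp\phi$, where $v_k=u_k$, yields the inequality with the correct constants whichever normalization is intended.
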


The proof of \Cref{tightness} relies on the following obsertvations.
\begin{lemma} \label{vanishing_weak_limit}
Suppose $\{h_k\}_{k \in \N} \subset \dot{H}^1(\R^n), n \geq 1$ is bounded and for any $\rho>0$ there holds
\[ \lim_{k \rightarrow \infty} \int_{B_\rho} |\nabla h_k|^2 dx = 0. \]
Then $h_k \rightarrow 0$ weakly in $\dot{H}^1(\R^n)$.
\end{lemma}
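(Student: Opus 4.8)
The plan is to exploit the Hilbert-space structure of $\dot{H}^1(\R^n)$. Equipped with the inner product $\langle u,v\rangle_{\dot{H}^1}=\int_{\R^n}\nabla u\cdot\nabla v\,dx$, weak convergence $h_k\to 0$ in $\dot{H}^1(\R^n)$ means precisely that $\int_{\R^n}\nabla h_k\cdot\nabla g\,dx\to 0$ for every $g\in\dot{H}^1(\R^n)$. Since $\{h_k\}$ is bounded by hypothesis, a standard three-$\epsilon$ argument shows it suffices to verify this convergence only for $g$ in a fixed dense subset of $\dot{H}^1(\R^n)$; I would take that subset to be $C_c^{\infty}(\R^n)$, which is dense by the definition of the homogeneous Sobolev space as the completion of $C_c^{\infty}(\R^n)$ under the norm $\|\nabla\cdot\|_{L^2}$.

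The core estimate is then a one-line localization. Fix $g\in C_c^{\infty}(\R^n)$ and choose $\rho>0$ with $\supp g\subset B_\rho$; then $\nabla g$ is supported in $B_\rho$ as well, and by the Cauchy--Schwarz inequality
\[
\Big|\int_{\R^n}\nabla h_k\cdot\nabla g\,dx\Big|
=\Big|\int_{B_\rho}\nabla h_k\cdot\nabla g\,dx\Big|
\leq \|\nabla g\|_{L^2(\R^n)}\Big(\int_{B_\rho}|\nabla h_k|^2\,dx\Big)^{1/2}.
\]
By assumption the right-hand side tends to $0$ as $k\to\infty$. Combining this with the boundedness of $\{h_k\}$ and the density of $C_c^{\infty}(\R^n)$ in $\dot{H}^1(\R^n)$ yields $h_k\to 0$ weakly in $\dot{H}^1(\R^n)$, as claimed.

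I do not expect a genuine obstacle here; the only points that deserve a sentence of justification are (i) that $C_c^{\infty}(\R^n)$ is dense in $\dot{H}^1(\R^n)$, which is immediate from how $\dot{H}^1$ is defined, and (ii) the elementary functional-analytic fact that a bounded sequence in a Hilbert space which is tested to zero against a dense subset converges weakly to zero. If one prefers to avoid invoking (ii), the same conclusion follows by a subsequence argument: any subsequence of $\{h_k\}$ is bounded in the Hilbert space $\dot{H}^1(\R^n)$, hence has a further subsequence converging weakly to some $h$; applying the displayed estimate with $\nabla g$ replaced by an arbitrary $\psi\in C_c^{\infty}(\R^n;\R^n)$ supported in $B_\rho$ shows $\int_{B_\rho}\nabla h\cdot\psi\,dx=0$ for all such $\psi$, so $\nabla h\equiv 0$ on every ball and therefore $h=0$ in $\dot{H}^1(\R^n)$; since every subsequence admits a further subsequence with the same weak limit $0$, the full sequence converges weakly to $0$.
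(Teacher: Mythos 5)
Your argument is correct and is essentially the paper's own proof: the paper likewise reduces weak convergence in $\dot{H}^1$ to testing against $\varphi \in C_c^{\infty}(\R^n)$ (using boundedness and density) and treats the resulting localized Cauchy--Schwarz estimate as straightforward, which is exactly the computation you spell out.
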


\begin{proof}
It suffices to show $(h_k,\varphi)_{\dot{H}^1} \rightarrow 0$ for any $\varphi \in C_c^{\infty}(\R^n)$. This is straightforward.
\end{proof}

\begin{lemma} \label{vanishing_weak_limit_L3}
Suppose $\{h_k\}_{k \in \N} \subset \dot{H}^1(\R^n), n \geq 3$ is bounded and for any $\rho>0$ there holds
\[ \lim_{k \rightarrow \infty} \int_{B_\rho} |h_k|^{2n/(n-2)} dx = 0. \]
Then $h_k \rightarrow 0$ weakly in $L^{2n/(n-2)}(\R^n)$ and $\dot{H}^1(\R^n)$ up to a subsequence.
\end{lemma}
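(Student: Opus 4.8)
The plan is to use reflexivity to extract a weak limit and then show that the local vanishing hypothesis forces that limit to be zero. Write $2^* = 2n/(n-2)$ for the Sobolev exponent and $(2^*)' = 2n/(n+2)$ for its conjugate. Since $\{h_k\}$ is bounded in $\dot{H}^1(\R^n)$, the Sobolev inequality shows it is also bounded in $L^{2^*}(\R^n)$; both spaces being reflexive, after passing to a subsequence (not relabeled) we may assume $h_k \rightharpoonup h$ weakly in $\dot{H}^1(\R^n)$ and $h_k \rightharpoonup g$ weakly in $L^{2^*}(\R^n)$ for some $h \in \dot{H}^1(\R^n)$, $g \in L^{2^*}(\R^n)$. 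Testing both convergences against an arbitrary $\varphi \in C_c^\infty(\R^n)$ and using the continuous embedding $\dot{H}^1(\R^n) \hookrightarrow L^{2^*}(\R^n)$ shows $g = h$, so it suffices to prove $h = 0$.

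Next I would bring in the hypothesis. Fix $\rho > 0$. By assumption $\|h_k\|_{L^{2^*}(B_\rho)} \to 0$, that is, $h_k \to 0$ strongly, hence weakly, in $L^{2^*}(B_\rho)$. On the other hand, restricting the global weak convergence $h_k \rightharpoonup h$ to $B_\rho$ also gives $h_k \rightharpoonup h$ weakly in $L^{2^*}(B_\rho)$, since every $\psi \in L^{(2^*)'}(B_\rho)$ extends by zero to an element of $L^{(2^*)'}(\R^n)$. By uniqueness of weak limits, $h = 0$ almost everywhere on $B_\rho$. As $\rho$ was arbitrary, $h = 0$ almost everywhere on $\R^n$, and therefore $h_k \rightharpoonup 0$ weakly in $L^{2^*}(\R^n)$ and, by the identification above, weakly in $\dot{H}^1(\R^n)$ along this subsequence.

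The argument is essentially routine; the only point requiring a little care is that the two notions of weak convergence, in $\dot{H}^1(\R^n)$ and in $L^{2^*}(\R^n)$, must be checked to have the same limit, which follows by testing against $C_c^\infty(\R^n)$ and noting such functions lie in the dual of both spaces. Alternatively one can avoid introducing $g$ at all: from boundedness in $L^{2^*}(\R^n)$ one extracts a weakly convergent subsequence there, and the identification of its limit with the $\dot{H}^1$-weak limit and with $0$ proceeds exactly as above. No genuine obstacle is expected here, since the local hypothesis already supplies strong convergence on every ball.
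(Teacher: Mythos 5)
Your proof is correct and follows essentially the same route as the paper: the local $L^{2n/(n-2)}$ vanishing forces the weak limit to be zero, and the Sobolev embedding together with testing against $C_c^{\infty}$ functions identifies the $\dot{H}^1$-weak limit with the $L^{2n/(n-2)}$-weak limit. The only difference is cosmetic — you extract the subsequence first and kill the limit ball by ball, while the paper shows the weak $L^{2n/(n-2)}$ convergence to zero directly by splitting the dual pairing — so nothing further is needed.
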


\begin{proof}
For the weak convergence in $L^{2n/(n-2)}(\R^n)$, it suffices to show
\[ (h_k,\varphi)_{L^{2n/(n-2)}(\R^n) \times L^{2n/(n+2)}(\R^n)} \rightarrow 0 \]
for any $\varphi \in C_c^{\infty}(\R^n)$. This is straightforward by splitting the integral into $B_\rho$ and $\R^n \backslash B_\rho$. Since $\{h_k\}_{k \in \N}$ is bounded in $\dot{H}^1(\R^n)$, $h_k \rightarrow h_0$ weakly in $\dot{H}^1(\R^n)$ up to a subsequence. Hence $h_k \rightarrow h_0$ weakly in $L^{2n/(n-2)}(\R^n)$ up to a subsequence. And this gives $h_0 = 0$.
\end{proof}

\begin{proof}[Proof of \Cref{tightness}]
This idea of this proof is to apply \Cref{variant_cc} to $\{|\nabla u_k|^2dx\}_{k \in \N}$ and $\{|v_k|^3dx\}_{k \in \N}$ separately. The case of vanishing and the case of compactness are easy. In the case of dichotomy, we need to do suitable localization to split the concentration at infinity.
\par

First, we look at the measures $\{|\nabla u_k|^2dx\}_{k \in \N}$. There exists a subsequence $\{u_k\}_{k \in \Lambda}$ such that
\[ \liminf_{k \rightarrow \infty} \|\nabla u_k\|^2_{L^2} = \lim_{k \rightarrow \infty, k \in \Lambda} \|\nabla u_k\|^2_{L^2}, \]
then without loss of generality, we assume
\[ \liminf_{k \rightarrow \infty} \|\nabla u_k\|^2_{L^2} = \lim_{k \rightarrow \infty} \|\nabla u_k\|^2_{L^2} = 1. \]
\par

We apply \Cref{variant_cc} to the measures $\{|\nabla u_k|^2dx\}_{k \in \N}$. In the case of vanishing, \Cref{vanishing_weak_limit} shows $u_k \rightarrow 0$ weakly in $\dot{H}(\R^6)$, then $v_k = 0$ satisfies all the conditions.
\par
In the case of compactness, the set of measures $\{|\nabla u_k|^2dx\}_{k \in \N}$ is tight. $v_k = u_k$ satisfies all the conditions except for the tightness of $\{|v_k|^3dx\}_{k \in \N}$.
\par
In the case of dichotomy, we can choose $\epsilon_k \rightarrow 0$ with corresponding $r_k \rightarrow \infty$ such that
\begin{equation} \label{tightness_eq0}
  \limsup_{k \rightarrow \infty} \Big( \Big|\lambda - \int_{B_{r_k}} |\nabla u_k|^2 dx\Big| 
      + \Big|\mu_k(\R^6) -\lambda - \int_{\R^n \backslash B_{2r_k}} |\nabla u_k|^2 dx\Big| \Big) = 0.
\end{equation}
Define the cut-off functions $\{\varphi_k\}_{k \in \N}$ satisfying
\[ 0 \leq \varphi_k \leq 1, \quad \varphi_k|_{B_{r_k}} = 1, 
    \quad \varphi_k|_{\R^6 \backslash B_{2r_k}} = 0, \quad |\nabla \varphi_k| \leq Cr_k^{-1}, \quad |\nabla^2 \varphi_k| \leq Cr_k^{-2}. \]
Let $v_k := u_k \varphi_k$, $w_k := u_k(1-\varphi_k)$, then we can deduce that, for any $\rho>0$,
\[ \lim_{k \rightarrow \infty} \int_{B_\rho} |\nabla w_k|^2 dx = 0. \]
\Cref{vanishing_weak_limit} yields $w_k \rightarrow 0$ weakly in $\dot{H}(\R^6)$, and $v_k \rightarrow u$ weakly in $\dot{H}(\R^6)$. Now let $Q_k$ be the concentration functional of the measure $|\nabla v_k|^2dx$ as in \Cref{variant_cc}. Then \eqref{tightness_eq0} yields
\[
  \lim_{k \rightarrow \infty} Q_k(r_k) = \lambda,
  \quad
  \lim_{k \rightarrow \infty} \int_{\R^6} |\nabla v_k|^2 dx = \lambda.
\]
According to the proof of \Cref{variant_cc}, $Q_k$ converges at almost every point to a non-decreasing, non-negative, bounded and left continuous function $Q$. In the dichotomy case, for any $\epsilon > 0$, there exists $r_0>0$ such that $\lim_{k \rightarrow \infty} Q_k(r_0) = Q(r_0)$ and
\[ \limsup_{k \rightarrow \infty} \Big| \lambda - \int_{B_{r_0}} |\nabla u_k|^2 dx \Big| \leq 2 \epsilon. \]
For $k$ sufficiently large, $r_k \geq r_0$, then $u_k = v_k$ in $B_{r_0}$. This gives
\[ \limsup_{k \rightarrow \infty} \Big| \lambda - \int_{B_{r_0}} |\nabla v_k|^2 dx \Big| \leq 2 \epsilon. \]
By the convergence of $Q_k(r_0)$, we have $Q(r_0) \in (\lambda - 2 \epsilon,\lambda + 2 \epsilon)$. Therefore, $\lim_{r \rightarrow \infty} Q(r) = \lambda$. Using the same argument for the compact case in the proof of \Cref{variant_cc}, we know $\{|\nabla v_k|^2dx\}_{k \in \N}$ is tight.
\par

For the compactness of the measures $\{|v_k|^3dx\}_{k \in \N}$, we assume, without loss of generality,
\[ \liminf_{k \rightarrow \infty} \|v_k\|^3_{L^3} = \lim_{k \rightarrow \infty} \|v_k\|^3_{L^3} = 1 \]
\par
Apply \Cref{variant_cc}. In the case of vanishing, \Cref{vanishing_weak_limit_L3} shows that $v_k \rightarrow 0$ weakly in $\dot{H}(\R^6)$ up to a subsequence, then $v_k = 0$ satisfies all the conditions. In the case of compactness, the set of measures $\{|v_k|^3dx\}_{k \in \N}$ is tight.
\par

In the case of dichotomy, we get a sequence of enlarged radius $r_k \rightarrow +\infty$ and we can define cutoff functions as before. With the same argument, the products of $v_k$ and these cutoff functions are the final approximation candidates. With slight abuse of notations, we still denote these approximation candidates and the cutoff functions with $v_k$ and $\varphi_k$, respectively.
\par
To show the local energy inequality \eqref{v_local_energy_inequality}, note that $v_k$ satisfies the following equation
\begin{equation} \label{localized_eq}
  \begin{split}
    -\Delta v_k + (\tilde{u}_k \cdot \nabla) v_k + \nabla (p_k \varphi_k) 
    =& f \varphi_k - 2\nabla u_k \cdot \nabla \varphi_k - u_k \Delta \varphi_k + p_k \nabla \varphi_k \\
    &+ u_k (\tilde{u}_k \cdot \nabla) \varphi_k.
  \end{split}
\end{equation}
Define $\tilde{v}_k := \chi_k * v_k$ and $\tilde{w}_k := \chi_k * w_k$. Then for any $\phi \in C_c^{\infty}(\R^6)$, testing \eqref{localized_eq} with $v_k \phi$ yields
\[ 
  \begin{split}
    2\int_{\R^6} |\nabla v_k|^2 \phi dx \leq& \int_{\R^6} |v_k|^2 |\Delta \phi| dx
      + \int_{\R^6} |v_k|^2 (\tilde{v}_k \cdot \nabla) \phi dx + \int_{\R^6} p_k \varphi_k (v_k \cdot \nabla) \phi dx  \\
    & + \int_{\R^6} f\varphi_k v_k \phi dx + \int_{\R^6} |v_k|^2 (\tilde{w}_k \cdot \nabla) \phi dx + \int_{\R^6} p_k \varphi_k \phi (u_k \cdot \nabla) \varphi_k dx \\
    & + \int_{\R^6} \big[ -2\nabla u_k \cdot \nabla \varphi_k - u_k \Delta \varphi_k + p_k \nabla \varphi_k + u_k (\tilde{u}_k \cdot \nabla) \varphi_k \big] v_k \phi dx.
  \end{split}
\]
Note that $\phi$ has compact support and $r_k \rightarrow \infty$, thus there exists $k_0 \in \N$ depending on $\{r_k\}_{k \in \N}$ and $\phi$ such that for $k \geq k_0$, we have $\nabla \varphi_k = 0$ in $\supp \phi$ and $\varphi_k = 1, 1-\varphi_k = 0$ in $\supp \phi$. Therefore, every term involing $1-\varphi_k$ or the derivative of $\varphi_k$ vanishes. This gives the inequality \eqref{v_local_energy_inequality} for $k \geq k_0$ and concludes this proof.
\end{proof}
\par

As a result of \Cref{preparation_existence}, \Cref{tightness} and concentration-compactness lemma by Lions in \cite{lions1985concentration}, we can deduce the following weak convergence of measures,
\begin{equation} \label{dirac_measures}
  \begin{split}
  \mu_k := |\nabla (v_k-u)|^2 dx \rightarrow \mu \quad &\text{weakly}-*, \\
  \nu_k := |v_k-u|^3 dx \rightarrow \nu \quad &\text{weakly}-*.
  \end{split}
\end{equation}
And we have
\begin{equation} \label{lions_condition_1}
  \begin{split}
  \mu &\geq \sum_{j \in J} \mu^{j} \delta_{x^{j}}, \\
  \nu &= \sum_{j \in J} \nu^{j} \delta_{x^{j}},
  \end{split}
\end{equation}
for some family $\{\mu^j\}_{j \in J} \subset \R^+$, $\{x^j\}_{j \in J} \subset \R^6$ and an index set $J$ which is at most countable with
\begin{equation} \label{lions_condition_2}
  C_s (\nu^j)^{2/3} \leq \mu^j \quad \text{for all } j \in J.
\end{equation}
For Lions's concentration-compactness lemma, we also suggest an expository reference, namely page 44 in Struwe's book \cite{struwe1990variational}.

\subsection{Weak solution sets and local energy estimates}

Now we can give a definition to weak solution set which contains the concentration measures $\mu$ and $\nu$ in \eqref{dirac_measures}.
\begin{definition} \label{weaksolutionset}
The quadruple $(u,p,\mu,\nu)$ is called a weak solution set of the stationary Navier-Stokes equations \eqref{sta_navierstokes} if
\begin{enumerate}
    \item $u$ and $p$ are obtained as weak limits of weak solutions $\{(u_k,p_k)\}_{k \in \N}$ of the regularized stationary Navier-Stokes equations $\eqref{reg_sta_navierstokes}$, which are uniformly bounded in $H^1(\R^6)$ and $L^{3/2}(\R^6)$ respectively.
    \item $\mu$ and $\nu$ are obtained as weak limits in \eqref{dirac_measures} in the sense of measures.
\end{enumerate}
\end{definition}

\begin{remark} \label{remark_distribution_solution}
In the weak solution set, $(u,p)$ solves the stationary Navier-Stokes equations \eqref{sta_navierstokes} in the sense of distributions. It suffices to verify
\[ \int_{\R^6} u_k (\tilde{u}_k \cdot \nabla) \psi dx \rightarrow \int_{\R^6} u (u \cdot \nabla) \psi dx. \]
Indeed, $u_k \rightarrow u$ and $\tilde{u}_k \rightarrow u$ in $L^2(\supp \psi)$ up to a subsequence as $k \rightarrow \infty$, by Rellich's compact embedding. The convergence of other linear terms is straightforward.
\end{remark}

\begin{remark} \label{remark_existence}
Given $f \in L^{3/2}(\R^6)$, the stationary Navier-Stokes equations \eqref{sta_navierstokes} have at least one weak solution set. This is a direct consequence of \Cref{preparation_existence} and \Cref{tightness}.
\end{remark}

To prove weak solutions sets satisfy certain local energy estimates, we need to study the measures $|\nabla v_k|^2dx, |v_k|^3dx$ and $v_kp_kdx$ in local scale. The goal is to show that possible concentration phenomena of these measures can be controlled by the concentration measures. For $|\nabla u_k|^2dx$ and $|u_k|^3dx$, we prove the following result.

\begin{lemma} \label{measure_convergence_1}
Suppose $\{(u_k, p_k)\}_{k \in \N}$ are solutions of regularized equations \eqref{reg_sta_navierstokes} and $(u,p,\mu,\nu)$ is the corresponding weak solution set, then
\[ |\nabla v_k|^2 dx \rightarrow |\nabla u|^2 dx + \mu \quad \text{weakly in the sense of measures} \]
and
\[ \limsup_{k \rightarrow \infty} \int_{\R^6} \psi |v_k|^3 dx \leq \int_{\R^6} \psi |u|^3dx + \int_{\R^6} \psi d\nu \]
for any $0 \leq \psi \in L^{\infty} \cap C^0(\R^6)$. Here, $\{v_k\}_{k \in \N}$ is as in \Cref{tightness}.
\end{lemma}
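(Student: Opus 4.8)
\emph{Proof sketch.} The plan is to treat the two assertions separately: the gradient term by a direct algebraic expansion, and the $L^3$ term by the Brezis--Lieb lemma. The latter is the crux, precisely because $\{v_k\}$ does \emph{not} converge strongly in $L^3_{\text{loc}}(\R^6)$.

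For the first assertion, write the pointwise identity
\[
  |\nabla v_k|^2 = |\nabla(v_k-u)|^2 + 2\,\nabla v_k\cdot\nabla u - |\nabla u|^2 .
\]
Fix $\psi\in C_c(\R^6)$. The first term integrates to $\int\psi\,d\mu_k\to\int\psi\,d\mu$ by \eqref{dirac_measures}. For the cross term, $v_k\rightharpoonup u$ weakly in $\dot{H}^1(\R^6)$ gives $\nabla v_k\rightharpoonup\nabla u$ weakly in $L^2(\R^6)$, and since $\psi$ is bounded we have $\psi\nabla u\in L^2(\R^6)$, so $\int 2\psi\,\nabla v_k\cdot\nabla u\,dx\to\int 2\psi|\nabla u|^2\,dx$. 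Adding the three contributions yields $\int\psi|\nabla v_k|^2\,dx\to\int\psi\,(|\nabla u|^2\,dx+d\mu)$. Finally, since $|\nabla v_k|^2\,dx$ is tight by \Cref{tightness} and $|\nabla u|^2\,dx,\mu$ are finite, this weak-$*$ convergence of measures persists against any $\psi\in L^\infty\cap C^0(\R^6)$: approximating $\psi$ by $\psi\chi_R$ with $\chi_R\in C_c^\infty(\R^6)$, $\chi_R\equiv 1$ on $B_R$, the error is at most $\|\psi\|_\infty$ times the mass outside $B_R$, which is uniformly small in $k$.

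For the second assertion, by Rellich's theorem $v_k\to u$ strongly in $L^2_{\text{loc}}(\R^6)$, so after passing to a subsequence (a diagonal extraction over an exhaustion of $\R^6$) we may assume $v_k\to u$ a.e.\ in $\R^6$; this can be done while retaining the convergences in \eqref{dirac_measures}. Fix $0\le\psi\in L^\infty\cap C^0(\R^6)$. Applying the Brezis--Lieb lemma to $\psi^{1/3}v_k\to\psi^{1/3}u$ a.e., which is bounded in $L^3(\R^6)$ because $\int|v_k|^3\,dx$ is uniformly bounded by \Cref{tightness}, gives
\[
  \lim_{k\to\infty}\int_{\R^6}\psi\bigl(|v_k|^3-|v_k-u|^3\bigr)\,dx = \int_{\R^6}\psi|u|^3\,dx .
\]
On the other hand $\int\psi|v_k-u|^3\,dx=\int\psi\,d\nu_k\to\int\psi\,d\nu$, where the convergence extends to $\psi\in L^\infty\cap C^0(\R^6)$ since $\nu_k=|v_k-u|^3\,dx$ is tight — this follows from tightness of $|v_k|^3\,dx$, finiteness of $|u|^3\,dx$, and $|v_k-u|^3\le C(|v_k|^3+|u|^3)$. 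Summing the two displays gives $\int\psi|v_k|^3\,dx\to\int\psi|u|^3\,dx+\int\psi\,d\nu$, which in particular yields the claimed $\limsup$ bound.

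The only genuinely delicate point is the passage for $|v_k|^3$: strong $L^3_{\text{loc}}$ compactness of $\{v_k\}$ is exactly what fails (this loss of compactness is the reason defect measures are introduced), so the argument must route through a.e.\ convergence and the Brezis--Lieb identity rather than any uniform-integrability argument. Everything else is bookkeeping: checking that tightness is inherited by $|\nabla(v_k-u)|^2\,dx$ and $|v_k-u|^3\,dx$, and that tightness upgrades weak-$*$ convergence of finite measures from $C_c(\R^6)$ to $L^\infty\cap C^0(\R^6)$ test functions. \qed
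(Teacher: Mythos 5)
Your argument is correct, and for the cubic term it takes a genuinely different route from the paper. For the gradient part you and the paper do essentially the same thing: expand around $u$, use $\nabla v_k\rightharpoonup\nabla u$ in $L^2$ to kill the cross term, use $\mu_k\rightharpoonup\mu$ from \eqref{dirac_measures}, and (implicitly in the paper, explicitly in your sketch) use tightness from \Cref{tightness} to pass from compactly supported to bounded continuous test functions. For the $|v_k|^3$ term the paper does not use Brezis--Lieb: it expands $|v_k|^3$ around $u$ and disposes of the mixed terms containing $(v_k-u)u$ by Vitali's convergence theorem, the point being that these cross terms, e.g.\ $|v_k-u|^2|u|\psi$ and $|v_k-u||u|^2\psi$, are \emph{subcritical} -- the fixed factor $u\in L^3$ supplies equi-integrability via H\"older, while $v_k\to u$ locally strongly in $L^2$ (hence a.e.\ along subsequences) gives convergence, so uniform integrability is only ever invoked where it is available, never for the critical term $|v_k-u|^3$, which is exactly the piece carried by $\nu_k\rightharpoonup\nu$. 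So your closing remark that a uniform-integrability argument cannot work is too strong: it fails for $|v_k-u|^3$ itself, but that term is never treated that way in either proof. Your Brezis--Lieb route buys a cleaner bookkeeping (one named lemma instead of term-by-term cross estimates) and in fact yields the full limit, not just the $\limsup$; its only cost is the a.e.-convergence step, which you should phrase via the standard subsequence principle (every subsequence of $\int\psi\bigl(|v_k|^3-|v_k-u|^3\bigr)dx$ has a further subsequence along which $v_k\to u$ a.e., with the same limit value, so the whole sequence converges) rather than ``we may assume'', since the conclusion concerns the sequence fixed in \Cref{tightness}. Your observation that tightness of $|v_k-u|^3dx$ and $|\nabla(v_k-u)|^2dx$ is inherited from \Cref{tightness} plus finiteness of $|u|^3dx$, $|\nabla u|^2dx$ is the right way to justify testing against $\psi\in L^\infty\cap C^0(\R^6)$, a point the paper leaves implicit.
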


\begin{proof}
For any $\psi \in L^{\infty} \cap C^0(\R^6)$, we have
\begin{align}
  \int_{\R^6} |\nabla v_k|^2 \psi dx
    \rightarrow & \int_{\R^6} \psi d\mu + \int_{\R^6} |\nabla u|^2 \psi dx \quad \text{as } k \rightarrow \infty.
\end{align}
Here, since $v_k-u$ converging to zero weakly in $\dot{H}^1$, the cross integral term of $\nabla (v_k-u) \cdot \nabla u$ converges to zero. For the measure $|v_k|^3dx$,
\begin{align} \label{measure_convergence_1_eq1}
  \int_{\R^6} |v_k|^3 \psi dx \rightarrow{} \int_{\R^6} \psi d\nu + \int_{\R^6} |u|^3 \psi dx \quad \text{as } k \rightarrow \infty.
\end{align}
Here, the cross integral terms containing $(v_k-u)u$ converge to zero by Vitalli's convergence theorem, since the cross terms are uniformly integrable with respect to the measure $\psi dx$.
\end{proof}

Estimating the concentration for the measures $v_kp_kdx$ is more difficult. We need to localize the Poisson equation for the pressure and use elliptic regularity theory.

\begin{lemma} \label{measure_convergence_2}
Suppose $\{(u_k, p_k)\}_{k \in \N}$ are solutions of the regularized equations \eqref{reg_sta_navierstokes} and $(u,p,\mu,\nu)$ is the corresponding weak solution set. Let $\{v_k\}_{k \in \N}$ is as in \Cref{tightness}. Then
\[
 \limsup_{k \rightarrow \infty} \int_{\R^6} \zeta \big| v_k(p_k-\gamma) - u(p-\gamma) \big| dx
    \leq 2 \int_{\R^6} \zeta d\nu
\]
for any $0 \leq \zeta \in C_c^{\infty}(\R^6)$ and any $\gamma \in \R$. 
\end{lemma}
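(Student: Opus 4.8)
The plan is to decompose the pressure into a part governed by the local nonlinearity and a harmonic part, and to handle these two parts separately. Fix $0 \le \zeta \in C_c^\infty(\R^6)$ with support in some ball $B_R$, and fix $\gamma \in \R$. On a slightly larger ball $B_{2R}$, write the Poisson equation $-\Delta p_k = \partial_i \partial_j (\tilde u_k^i u_k^j)$ (which follows by taking the divergence of \eqref{reg_sta_navierstokes}, using $\divr u_k = 0$), and split $p_k = p_k^1 + p_k^2$ on $B_{2R}$, where $p_k^1$ solves $-\Delta p_k^1 = \partial_i\partial_j(\tilde u_k^i u_k^j)$ with zero boundary data on $B_{2R}$ and $p_k^2$ is harmonic in $B_{2R}$. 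By Calderón--Zygmund theory, $\|p_k^1\|_{L^{3/2}(B_{2R})} \le C \|\tilde u_k\|_{L^3(B_{2R})}\|u_k\|_{L^3(B_{2R})} \le C$, and an analogous decomposition holds for the limit $p$ (with the harmonic part $p^2$). Since $p^2_k$ is harmonic, interior estimates give uniform bounds on all its derivatives on $B_R$, so up to a subsequence $p_k^2 \to p^2$ locally uniformly on $B_R$; also $v_k \to u$ strongly in $L^2(B_R)$ by Rellich, so the contribution of the harmonic part, $\int \zeta |v_k p_k^2 - u p^2|\,dx$, converges to zero.

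The main point is then to estimate $\limsup_k \int_{\R^6} \zeta\, |v_k p_k^1 - u p^1|\,dx$, and this is where the defect measure $\nu$ enters. The idea is that $p_k^1$ concentrates only where $|u_k|^2$ (equivalently $|v_k|^2$ in the support of $\zeta$, for large $k$) concentrates, and the concentration of the latter is exactly recorded by $\nu$. Concretely, split $\int \zeta |v_k p_k^1 - u p^1| \le \int \zeta |v_k - u|\,|p_k^1| + \int \zeta |u|\,|p_k^1 - p^1|$. For the second term, since $u \in L^3$ and $p_k^1 \rightharpoonup p^1$ weakly in $L^{3/2}(B_{2R})$ with $u$ fixed, this is where one uses that $u p^1$ is uniformly integrable and $p_k^1 - p^1 \to 0$ weakly; by a Vitali-type argument (as in \Cref{measure_convergence_1}) this term is controlled, contributing at most something bounded by $\nu$ via the $L^{3/2}$ concentration of $p_k^1$ inherited from the $L^3$ concentration of $u_k$. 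For the first term, apply Hölder with exponents $3$ and $3/2$ on $\supp\zeta$: $\int \zeta |v_k-u||p_k^1| \le \|\zeta^{1/3}(v_k-u)\|_{L^3}\,\|\zeta^{2/3} p_k^1\|_{L^{3/2}}$, and here one bounds $\|\zeta^{2/3} p_k^1\|_{L^{3/2}}$ by $C \|\zeta^{2/3}(\text{something involving } u_k)\|$ using the Calderón--Zygmund estimate localized by the weight — the weighted $L^{3/2}$ norm of $p_k^1$ is controlled by the weighted $L^3$ norm of $u_k$ (roughly $\|\zeta^{2/3} p_k^1\|_{L^{3/2}} \lesssim \|\tilde u_k\|_{L^3}\|\zeta^{1/2} u_k\|_{L^3} + (\text{lower order})$, exploiting that the kernel is a singular integral so the weight can be moved at the cost of controllable error terms because $\tilde u_k$ is globally bounded in $L^3$). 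Passing $k \to \infty$, $\|\zeta^{1/3}(v_k-u)\|_{L^3}^3 \to \int \zeta\,d\nu$ by \Cref{measure_convergence_1}, and $\|\zeta^{2/3} p_k^1\|_{L^{3/2}}^{3/2}$ is bounded in terms of $\int \zeta\, d\nu$ plus a term that disappears because $v_k - u$ concentrates exactly where the defect lives; combining and tracking the constant carefully yields the factor $2$ in $2\int \zeta\, d\nu$.

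The hard part will be making the weighted Calderón--Zygmund estimate precise, that is, bounding $\|\zeta^{2/3} p_k^1\|_{L^{3/2}}$ (or a suitable localized version) by the \emph{weighted} $L^3$ norm of $u_k$ near $\supp \zeta$ rather than the global one, since only the weighted norm sees the defect $\nu$; the commutator between the singular integral operator and multiplication by the cutoff must be shown to produce only terms that are either lower order or already controlled by $\|u_k\|_{L^3}$ globally (hence bounded) but multiplied by $\|v_k - u\|_{L^3(\supp\zeta)}$, which tends to a quantity bounded by $\int\zeta\,d\nu$. A clean way to organize this is to choose the $L^{3/2}$ truncation radius for $p_k^1$ to match the $L^3$-concentration radius supplied by the tightness in \Cref{tightness}, so that both the near-field and far-field contributions are explicitly dominated by $\int \zeta\,d\nu$; the bookkeeping of the numerical constant to land on exactly $2$ (rather than some other absolute constant times $\int\zeta\,d\nu$) is the most delicate piece, and one should check whether $2$ is sharp or simply a convenient bound. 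Once this is in place, combining the harmonic-part convergence, the Vitali estimate for the $|u||p_k^1 - p^1|$ term, and the Hölder estimate for the $|v_k-u||p_k^1|$ term gives the claim.
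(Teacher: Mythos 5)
Your overall architecture (localize the pressure equation, discard a strongly convergent remainder, and let Calder\'on--Zygmund transfer the $L^3$-defect of $u_k$ to an $L^{3/2}$-defect of the pressure) is the same as the paper's, and the harmonic-remainder step is fine (as is your source term $\partial_i\partial_j(\tilde u_k^i u_k^j)$, which is indeed the correct one for \eqref{reg_sta_navierstokes}). The genuine gap is in the main term. You pair $|v_k-u|$ with the \emph{full} local pressure $|p_k^1|$ and then claim, after H\"older, that $\|\zeta^{2/3}p_k^1\|_{L^{3/2}}^{3/2}$ is bounded by $\int\zeta\,d\nu$ plus a term that ``disappears because $v_k-u$ concentrates where the defect lives''. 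It does not disappear: any weighted Calder\'on--Zygmund bound controls $\|\zeta^{2/3}p_k^1\|_{L^{3/2}}^{3/2}$ by (roughly) $\int\zeta|u_k|^3$ plus bounded lower-order terms, whose limsup is $\int\zeta|u|^3+\int\zeta\,d\nu+\dots$, and the $\int\zeta|u|^3$ contribution survives; once H\"older has decoupled the two factors you can no longer play the weak convergence $v_k-u\rightharpoonup 0$ against the non-concentrating part of $p_k^1$. What your estimate can give is at best $C\big(\int\zeta\,d\nu\big)^{1/3}\big(\int\zeta|u|^3+\int\zeta\,d\nu\big)^{2/3}$, which is not of the form $2\int\zeta\,d\nu$ (take $\nu$ small but nonzero on $\supp\zeta$ and $\|u\|_{L^3(\supp\zeta)}$ large). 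To get the full first power of $\nu$ the defect must appear in both H\"older factors: split $v_kp_k^1-up^1=(v_k-u)(p_k^1-p^1)+(v_k-u)p^1+u(p_k^1-p^1)$, handle the last two terms by Vitali (note that for $\int\zeta|u|\,|p_k^1-p^1|\,dx$ the weak $L^{3/2}$ convergence you invoke is not enough, since the absolute value destroys cancellation; you need convergence a.e.\ or in measure, which is available because the sources converge strongly in $L^s$, $s<3/2$, by Rellich), and reduce the first term to a quantitative concentration estimate for $p_k^1-p^1$.

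That estimate --- $\limsup_k\int|\zeta^{2/3}(p_k-p)|^{3/2}dx\le\sqrt2\int\zeta\,d\nu$ --- is exactly the piece you postponed as ``the hard part'', and it is the core of the paper's proof. The paper obtains it with no commutator analysis by multiplying the Poisson equation by $\psi=\zeta^{2/3}$ \emph{before} inverting the Laplacian: the leading source is then $\partial_i\partial_j(\psi u_k^iu_k^j)$ with the weight already inside the singular integral, so $\|(p_k^1-p^1)\psi\|_{L^{3/2}}\le\|\psi(u_k^iu_k^j-u^iu^j)\|_{L^{3/2}}$, and expanding $u_k^iu_k^j-u^iu^j$ and using Vitali for the cross terms gives the bound $\sqrt2\int\psi^{3/2}d\nu$; all commutator terms carry derivatives of $\psi$, are subcritical, and converge strongly by Rellich/compact Sobolev embedding (these are the pieces $p_k^2,p_k^3,p_k^4$ in the paper, playing the role of your harmonic remainder). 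With this in hand, H\"older yields $\limsup_k\int\zeta|v_k-u|\,|p_k-p|\,dx\le\big(\int\zeta\,d\nu\big)^{1/3}\big(\sqrt2\int\zeta\,d\nu\big)^{2/3}=2^{1/3}\int\zeta\,d\nu\le 2\int\zeta\,d\nu$, so there is no delicate bookkeeping needed to ``land on exactly $2$'': the constant $2$ is not sharp, the natural argument gives $2^{1/3}$. Finally, do not drop $\gamma$: the extra term $\int\zeta|v_k-u|\,|p-\gamma|\,dx$ also tends to zero by Vitali.
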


\begin{proof}
Note that $p_k$ satisfies the following elliptic equation,
\[ - \Delta p_k = \partial_i \partial_j (u_k^iu_k^j). \]
Indeed, this equation holds in the sense of distributions, as mentioned in \Cref{remark_distribution_solution}. Then we localize this equation with an arbitrary cut-off function $\psi \in C_c^{\infty}(\R^6)$, i.e.
\[
  \begin{split}
  - \Delta (p_k \psi) =& \psi \partial_i \partial_j (u_k^i u_k^j) - p_k \Delta \psi - 2\nabla p_k \cdot \nabla \psi \\
    =& \partial_i \partial_j (\psi u_k^i u_k^j) - u_k^i u_k^j \partial_i \partial_j \psi - \partial_i (u_k^i u_k^j) \partial_j \psi - \partial_j (u_k^i u_k^j) \partial_i \psi \\
      & - p_k \Delta \psi - 2\nabla p_k \cdot \nabla \psi.
  \end{split}
\]
\par

Now we can decompose the pressure $p_k=p_k^1+p_k^2+p_k^3+p_k^4$ with
\[
  \begin{split}
  - \Delta (p^1_k \psi) &= \partial_i \partial_j (\psi u_k^i u_k^j), \\
  - \Delta (p^2_k \psi) &= - u_k^i u_k^j \partial_i \partial_j \psi, \\
  - \Delta (p^3_k \psi) &= - \partial_i (u_k^i u_k^j) \partial_j \psi 
  						   - \partial_j (u_k^i u_k^j) \partial_i \psi, \\
  - \Delta (p^4_k \psi) &= - p_k \Delta \psi - 2\nabla p_k \cdot \nabla \psi.
  \end{split}
\]
Next we argue this decomposition exists. $p_k^1 \psi$ can be obtained by the Riesz transformation and Calderon-Zygmund estimate yields
\begin{equation} \label{CZ_pressure1}
  \|p^1_k \psi\|_{L^{3/2}} \leq \|\psi u_k^i u_k^j\|_{L^{3/2}}.
\end{equation}
Since $\nu_k \rightarrow \nu$ weakly, we have
\[
  \begin{split}
    \limsup_{k \rightarrow \infty} \int_{\R^6} |(p^1_k-p^1) \psi|^{3/2} dx
    \leq& \limsup_{k \rightarrow \infty} \int_{\R^6} |\psi (u_k^i u_k^j - u^iu^j)|^{3/2} dx \\
    \leq& \limsup_{k \rightarrow \infty} \int_{\R^6} |\psi u_k(u_k-u)|^{3/2} dx \\
      &+ \limsup_{k \rightarrow \infty} \int_{\R^6} |\psi u(u_k-u)|^{3/2} dx \\
    \leq& (1+\sqrt{2}) \limsup_{k \rightarrow \infty} \int_{\R^6} |\psi u(u_k-u)|^{3/2} dx \\
      &+ \sqrt{2} \limsup_{k \rightarrow \infty} \int_{\R^6} |\psi|^{3/2} |(u_k-u)|^3 dx \\
    \leq& \sqrt{2} \int_{\R^6} |\psi|^{3/2} d\nu.
  \end{split}
\]
By Vitalli's convergence theorem, the term in the fifth line converges to zero because $|u(u_k-u)|^{3/2}$ is uniformly integrable with respect to the finite measure $\psi^{3/2}dx$. Here, we use $|v_k|^{3/2} \leq \sqrt{2}|v_k-u|^{3/2} + \sqrt{2}|u|^{3/2}$ and $u_k=v_k$ in $B_{r_k}$ with $r_k \rightarrow \infty$ from \Cref{tightness}.
\par
Since $(u_k^i u_k^j) \partial_i \partial_j \psi$ is bounded in $L^{3/2}$ uniformly in $k \in \N$, $p_k^2 \psi$ can be obtained by convolution with singular kernels and Calderon-Zygmund estimate yields
\begin{equation} \label{CZ_pressure2}
  \|\nabla^2 (p^2_k \psi) \|_{L^q} \leq \|u_k^i u_k^j \partial_i \partial_j \psi\|_{L^q} \quad \text{for any } q \in (1,3/2].
\end{equation}
Thus by compact Sobolev embedding and $u_k^i u_k^j - u^i u^j \rightarrow 0$ in $L^s$ for any $s \in [1,3/2)$, we can deduce
\[ \limsup_{k \rightarrow \infty} \int_{\R^6} |(p^2_k-p^2) \psi|^{3/2} dx = 0. \]
\par

Similarly, for $p_k^3 \psi$ we have
\begin{equation} \label{CZ_pressure3}
  \|\nabla^2 (p^3_k \psi) \|_{L^q} \leq \|-\partial_i (u_k^i u_k^j) \partial_j \psi - \partial_j (u_k^i u_k^j) \partial_i \psi\|_{L^q} \quad \text{for any } q \in (1,6/5].
\end{equation}
Again by compact Sobolev embedding and $-\partial_i (u_k^i u_k^j) \partial_j \psi - \partial_j (u_k^i u_k^j) \partial_i \psi \rightarrow 0$ in $L^s(\supp \psi)$ for any $s \in [1,6/5)$, we can deduce
\[ \limsup_{k \rightarrow \infty} \int_{\R^6} |(p^3_k-p^3) \psi|^{3/2} dx = 0. \]

\par

Let $p_k^4:=p_k-(p_k^1+p_k^2+p_k^3)$ and $p_k^4$ solves the corresponding equation in the sense of distribution. We have
\begin{equation} \label{CZ_pressure4}
  \|\nabla (p^4_k \psi) \|_{L^q} \leq \|p_k \Delta \psi + 2\nabla p_k \cdot \nabla \psi\|_{L^q} \quad \text{for any } q \in (1,6/5].
\end{equation}
By compact Sobolev embedding and $\nabla p_k - \nabla p \rightarrow 0$ in $L^s(\supp \psi)$ for any $s \in [1,6/5)$, we can deduce
\[ \limsup_{k \rightarrow \infty} \int_{\R^6} |(p^4_k-p^4) \psi|^{3/2} dx = 0. \]
\par

Now we combine the estimates \eqref{CZ_pressure1}, \eqref{CZ_pressure2}, \eqref{CZ_pressure3} and \eqref{CZ_pressure4} and get
\begin{align*}
  \limsup_{k \rightarrow \infty} \int_{\R^6} |(p_k-p) \psi|^{3/2} dx 
    \leq \sqrt{2} \int_{\R^6} |\psi|^{3/2} d\nu
\end{align*}
Therefore, we can choose $\psi = \zeta^{2/3}$ and bound the concentration of the measure as following
\[ 
  \begin{split}
  \limsup_{k \rightarrow \infty} \int_{\R^6}& \zeta \big| v_k(p_k-\gamma) - u(p-\gamma) \big|dx \\
  	\leq& \limsup_{k \rightarrow \infty} \int_{\R^6} \zeta \big| v_k(p_k-\gamma) - u(p-\gamma) \big|dx \\
    \leq& \limsup_{k \rightarrow \infty} \int_{\R^6} \zeta |v_k||p_k - p| dx
      + \limsup_{k \rightarrow \infty} \int_{\R^6} \zeta |v_k- u||p-\gamma| dx \\
    \leq& \limsup_{k \rightarrow \infty} \int_{\R^6} \zeta |v_k-u||p_k - p| dx
      + \limsup_{k \rightarrow \infty} \int_{\R^6} \zeta |u||p_k - p| dx \\
    \leq& 2^{1/3} \int_{\R^6} \zeta d\nu
  \end{split}
\]
Due to Vitalli's convergence theorem, the second term in third line and the secind term in fourth line converge to zero. Note that $\zeta$ is nonnegative, so $\psi$ is a Lipschitz function by Corollary 5.1 in \cite{wu2021partially}.
\end{proof}

Now we can prove local energy estimates for the weak soution set $(u,p,\mu,\nu)$. We prove two variants which are similar in nature. We need two variants only for technical reasons in the proof of partial regularity results.

\begin{proposition} \label{measure_convergence_3}
Let the assumptions be as in \Cref{measure_convergence_1}, then the following local energy inequalities hold
\begin{equation} \label{local_energy_1}
  \begin{split}
  2\int_{\R^6} |\nabla u|^2 \phi dx + 2\int_{\R^6} \phi d\mu 
    \leq \int_{\R^6} |u|^2 |\Delta \phi| dx
      + \sum_{i=1}^n \int_{\R^6} |u|^3 |\nabla \phi_i| dx + 2\sum_{i=1}^n \int_{\R^6} |\nabla \phi_i| d\nu& \\
      + \sum_{i=1}^n \int_{\R^6} |\nabla \phi_i| |p-\gamma_i|^{3/2} dx + \int_{\R^6} f \cdot u \phi dx&
  \end{split}
\end{equation}
  
  \begin{equation} \label{local_energy_2}
    \begin{split}
    2\int_{\R^6} |\nabla u|^2 \phi dx + 2\int_{\R^6} \phi d\mu 
      \leq \int_{\R^6} |u|^2 |\Delta \phi| dx
        + 2\int_{\R^6} |u-\overline{u}_{\phi}|^3 |\nabla \phi| dx + 3\int_{\R^6} |\nabla \phi| d\nu& \\
      + \int_{\R^6} |u|^2 (u \cdot \nabla) \phi dx + \int_{\R^6} p (u \cdot \nabla) \phi dx + \int_{\R^6} f \cdot u \phi dx&
    \end{split}
  \end{equation}
 for any $n \geq 1$, any $\{\gamma_i\}_{1 \leq i \leq n} \subset \R$, any cut-off functions $\phi, \{\phi_i\}_{1 \leq i \leq n} \subset C_c^{\infty}(\R^6)$ with $\phi = \sum_{i=1}^n \phi_i$, where
  \[ \overline{u}_{\phi} = \frac{1}{\mathcal{L}(\supp \phi)} \int_{\supp \phi} u(x) dx. \]
\end{proposition}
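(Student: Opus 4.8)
The plan is to pass to the limit in the approximate local energy inequality from \Cref{tightness}, using the convergence results for the three families of measures established in \Cref{measure_convergence_1} and \Cref{measure_convergence_2}. Concretely, for \eqref{local_energy_1} I would start from \eqref{v_local_energy_inequality} written for the cutoff $\phi_i$ in place of $\phi$, exploit that $\phi_i\tilde v_k\cdot\nabla\phi_i$ and $p_k v_k\cdot\nabla\phi_i$ can be rewritten using the divergence-free structure so that the cubic and pressure terms appear against $\nabla\phi_i$ only, and then sum over $i=1,\dots,n$. The point of splitting $\phi=\sum\phi_i$ is that it lets us subtract an arbitrary constant $\gamma_i$ from $p_k$ on the support of each $\nabla\phi_i$ (since $\nabla\phi_i$ integrates against a constant times a divergence-free field to zero after integration by parts), which is exactly what \Cref{measure_convergence_2} is built to handle.

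For the left-hand side, \Cref{measure_convergence_1} gives $|\nabla v_k|^2dx\to|\nabla u|^2dx+\mu$ weakly as measures, so $2\int|\nabla v_k|^2\phi\,dx\to 2\int|\nabla u|^2\phi\,dx+2\int\phi\,d\mu$ for any fixed $\phi\in C_c^\infty$; this produces the two terms on the left of both inequalities. On the right-hand side, the linear term $\int f\cdot v_k\phi\,dx\to\int f\cdot u\phi\,dx$ since $v_k\to u$ strongly in $L^3_{\mathrm{loc}}$ (Rellich) and $f\in L^{3/2}$; the term $\int|v_k|^2|\Delta\phi|\,dx\to\int|u|^2|\Delta\phi|\,dx$ by the same strong $L^2_{\mathrm{loc}}$ convergence. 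For the cubic term, \Cref{measure_convergence_1} gives $\limsup_k\int\psi|v_k|^3dx\le\int\psi|u|^3dx+\int\psi\,d\nu$ for $0\le\psi\in C^0\cap L^\infty$, which, applied with $\psi=|\nabla\phi_i|$ for \eqref{local_energy_1} or $\psi=|\nabla\phi|$ together with the translation $v_k\mapsto v_k-\overline u_\phi$ (legitimate because $\int(\tilde u_k\cdot\nabla)\phi\,dx=0$ by the divergence-free property, modulo the mollifier error which vanishes) for \eqref{local_energy_2}, yields the $|\nabla\phi_i|$ and $|u-\overline u_\phi|^3$ terms plus the $\nu$-terms. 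For the pressure term, I would apply \Cref{measure_convergence_2} with $\zeta=|\nabla\phi_i|$ (resp.\ $\zeta=|\nabla\phi|$) and the same constant $\gamma_i$ (resp.\ $\gamma=\overline p_\phi$ or any constant), getting $\limsup_k\int|\nabla\phi_i|\,|v_k(p_k-\gamma_i)-u(p-\gamma_i)|\,dx\le 2\int|\nabla\phi_i|\,d\nu$; combined with the Calderón–Zygmund bound $\limsup_k\int|(p_k-p)\psi|^{3/2}dx\le\sqrt2\int|\psi|^{3/2}d\nu$ from the proof of \Cref{measure_convergence_2} this controls the pressure contribution in terms of $\int|\nabla\phi_i|\,|p-\gamma_i|^{3/2}dx$ plus a $\nu$-term; bookkeeping of the various $\sqrt2$, $2^{1/3}$ factors is absorbed into the stated constants (the coefficient $2$ in front of $\int|\nabla\phi_i|\,d\nu$ and $3$ in front of $\int|\nabla\phi|\,d\nu$).

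The last ingredient is the mollifier term $\int|v_k|^2(\tilde v_k\cdot\nabla)\phi\,dx$: one writes $\tilde v_k=v_k+(\tilde v_k-v_k)$ and uses $\|\tilde v_k-v_k\|_{L^3}\to 0$ (standard mollifier property, uniform in the $\dot H^1$ bound via Sobolev) together with $v_k\to u$ strongly in $L^3_{\mathrm{loc}}$, so $\int|v_k|^2(\tilde v_k\cdot\nabla)\phi\,dx\to\int|u|^2(u\cdot\nabla)\phi\,dx$ — this gives the cubic transport term appearing explicitly in \eqref{local_energy_2}, while in \eqref{local_energy_1} it is instead bounded pointwise by $|v_k|^3|\nabla\phi_i|$ and handled by the cubic measure convergence above. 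Integrating \eqref{v_local_energy_inequality} against $\phi_i$, summing, taking $\limsup_{k\to\infty}$, and invoking the three limits above term by term yields \eqref{local_energy_1}; taking $\limsup$ directly in \eqref{v_local_energy_inequality} tested with $\phi$ (after the shift by $\overline u_\phi$) yields \eqref{local_energy_2}. The main obstacle I anticipate is the pressure term: one must be careful that subtracting $\gamma_i$ is compatible with the localized Poisson decomposition of \Cref{measure_convergence_2} and that the weak-$*$ convergence $p_k\rightharpoonup p$ in $L^{3/2}_{\mathrm{loc}}$ is upgraded to the quantitative $\limsup$-control with the $\nu$ defect — but this is precisely the content of \Cref{measure_convergence_2}, so the work here is organizational rather than substantive, namely correctly matching each term in \eqref{v_local_energy_inequality} with the right lemma and tracking that all error terms (mollifier errors, cross terms $\nabla(v_k-u)\cdot\nabla u$, cross terms $(v_k-u)u$) vanish by weak convergence in $\dot H^1$ and Vitali's theorem.
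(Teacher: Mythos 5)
Your overall architecture (pass to the limit in \eqref{v_local_energy_inequality}, use \Cref{measure_convergence_1} for the gradient and cubic measures and \Cref{measure_convergence_2} for the pressure, split $\phi=\sum_i\phi_i$ and subtract $\gamma_i$ using the divergence-free structure) matches the paper. But there is a genuine gap in your treatment of the nonlinear transport term, and it is precisely the critical point of the proposition. You invoke ``$v_k\to u$ strongly in $L^3_{\mathrm{loc}}$ (Rellich)'' and ``$\|\tilde v_k-v_k\|_{L^3}\to 0$, uniform in the $\dot H^1$ bound via Sobolev'', and conclude that $\int|v_k|^2(\tilde v_k\cdot\nabla)\phi\,dx\to\int|u|^2(u\cdot\nabla)\phi\,dx$. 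Neither claim is justified: in $\R^6$ the Sobolev exponent of $\dot H^1$ is exactly $3$, so Rellich gives strong convergence only in $L^q_{\mathrm{loc}}$ for $q<3$, and the mollification error $\|\chi_k*v_k-v_k\|_{L^3}$ is controlled by the $\dot H^1$ bound only up to a constant, not to zero (a sequence concentrating at the mollification scale gives a non-vanishing error). Worse, if strong local $L^3$ convergence did hold, then $\nu=|v_k-u|^3dx$-limit would vanish on compact sets and the defect terms in \eqref{local_energy_1}--\eqref{local_energy_2} would be superfluous; your asserted exact limit of the cubic term is exactly the statement whose failure forces the measure $\nu$ into the inequalities, so the step is not merely unjustified but false in general.

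The paper handles this term without any critical-norm compactness. For \eqref{local_energy_1} it first uses Young's inequality to bound $\int|v_k|^2(\tilde v_k\cdot\nabla)\phi\,dx$ by $\tfrac23\int|v_k|^3|\nabla\phi|\,dx+\tfrac13\int|\tilde v_k|^3|\nabla\phi|\,dx$, then removes the mollification by a commutator estimate: writing $\tilde v_k|\nabla\phi|^{1/3}=(v_k|\nabla\phi|^{1/3})*\chi_k+h_1$ with $h_1\to0$ in $L^3$ because $|\nabla\phi|^{1/3}$ is $1/3$-H\"older and $\diam\supp\chi_k\to0$, so that $\limsup_k\int|\tilde v_k|^3|\nabla\phi|\,dx\le\limsup_k\int|v_k|^3|\nabla\phi|\,dx$, and only then applies the one-sided bound of \Cref{measure_convergence_1}, which produces the $\nu$-term. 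For \eqref{local_energy_2} it expands around the mean $\overline u_{k,\phi}$ as in \eqref{measure_convergence_3_eq1}: the terms containing the constant vector $\overline u_{k,\phi}$ are only quadratic in the fluctuation and pass to the limit using sub-critical strong convergence ($L^2_{\mathrm{loc}}$, $L^1_{\mathrm{loc}}$), while the genuinely cubic fluctuation term is estimated through the defect measure, yielding $\int|\nabla\phi|\,d\nu+2\int|u-\overline u_\phi|^3|\nabla\phi|\,dx+\int|u|^2(u\cdot\nabla)\phi\,dx$; combined with the pressure bound $\int p(u\cdot\nabla)\phi\,dx+2\int|\nabla\phi|\,d\nu$ this accounts for the coefficient $3$ in \eqref{local_energy_2}. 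Your pressure and $f$-terms are fine modulo small repairs (the limit $\int f\cdot v_k\phi\,dx\to\int f\cdot u\phi\,dx$ needs only weak $L^3$ convergence, and $\zeta=|\nabla\phi_i|$ is not smooth, which the paper circumvents by choosing $\psi=\zeta^{2/3}$ Lipschitz), but the cubic term must be reworked along the lines above for the proof to stand.
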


\begin{proof}
Since $u_k=v_k$ in $B_{r_k}$ with $r_k \rightarrow \infty$, we do not distinguish the difference of $u_k$ and $v_k$ in any ball of finite radius when passing to limit. \par
For the first local energy inequality \eqref{local_energy_1}, we pass \eqref{v_local_energy_inequality} to the limit by the weak convergence of measure and the strong convergence of $\{v_k\}_{k \in \N}$ in $L^2(\supp \phi)$. In the limiting process, we also need to combine the control on other measures obtained in \Cref{measure_convergence_1} and \Cref{measure_convergence_2}. The first subtle point is the term involving the cubic velocity. Note that
\[ 
  \begin{split}
  \int_{\R^6} |\tilde{v}_k|^3 |\nabla \phi| dx 
    =& \int_{\R^6} \Big| \int_{\R^6} v_k(x-y) \chi_k(y) |\nabla \phi(x)|^{1/3} dy \Big|^3 dx \\
    =& \| h_1 + h_2 \|_{L^3}^3,
  \end{split}
\]
where
\[
  \begin{split}
  h_1 &= \int_{\R^6} v_k(x-y) \chi_k(y) \big( |\nabla \phi(x)|^{1/3} - |\nabla \phi(x-y)|^{1/3} \big) dy, \\
  h_2 &= \int_{\R^6} v_k(x-y) \chi_k(y) |\nabla \phi(x-y)|^{1/3} dy. \\
  \end{split}
\]
For $h_1$, notice that $\diam (\supp \chi_k) \rightarrow 0$ and $x \rightarrow |x|^{1/3}$ is $1/3-$H{\"o}lder continuous, then Young's inequality for convolution yields
\[
  \begin{split}
  \|h_1\|_{L^3} &\leq 
     \Big\| \int_{\R^6} v_k(x-y) \chi_k(y) \frac{|\nabla \phi(x)|^{1/3} 
      - |\nabla \phi(x-y)|^{1/3}}{|y|^{1/3}} \diam (\supp \chi_k)^{1/3} dy \Big\|_{L^3} \\
    &\leq C \diam (\supp \chi_k)^{1/3} \|\phi\|_{C^2} \|\tilde{v}_k\|_{L^3} \\
    &\leq C \diam (\supp \chi_k)^{1/3} \|\phi\|_{C^2} \|v_k\|_{L^3}.
  \end{split}
\]
Since $v_k$ is uniformly bounded in $L^3$, the term $h_1$ converges to zero in $L^3$ when passing to limit.
For $h_2$, Young's inequality for convolution yields
\[
  \begin{split}
  \|h_2\|_{L^3} &= \| (v_k|\nabla \phi|^{1/3}) * \chi_k \|_{L^3} \\
    &\leq \| v_k|\nabla \phi|^{1/3} \|_{L^3}.
  \end{split}
\]
Combining these estimates and using the boundedness of $\{u_k\}_{k \in \N}$, we have
\[
  \begin{split}
  \limsup_{k \rightarrow \infty} \int_{\R^6} |v_k|^2 (\tilde{v}_k \cdot \nabla) \phi dx
    \leq& \frac{2}{3} \limsup_{k \rightarrow \infty} \int_{\R^6} |v_k|^3 |\nabla \phi| dx
      + \frac{1}{3} \limsup_{k \rightarrow \infty} \int_{\R^6} |\tilde{v}_k|^3 |\nabla \phi| dx \\
    \leq& \int_{\R^6} \psi |u|^3dx + \int_{\R^6} \psi d\nu.
  \end{split}
\]
\par
Now we move to the term involving pressure. Because $u_k$ is weakly divergence-free and $v_k = u_k$ in $\supp \phi$ for large $k$, we have
\[ 
  \begin{split}
  \limsup_{k \rightarrow \infty}\int_{\R^6} p_k (v_k \cdot \nabla) \phi dx 
    =& \sum_{i=1}^n \limsup_{k \rightarrow \infty} \int_{\R^6} p_k u_k \cdot \nabla \phi_i dx \\
    =& \sum_{i=1}^n \limsup_{k \rightarrow \infty} \int_{\R^6} (p_k-\gamma_i) u_k \cdot \nabla \phi_i dx \\
    \leq& \sum_{i=1}^n \int_{\R^6} |u|^3 |\nabla \phi_i| dx + 2\sum_{i=1}^n \int_{\R^6} |\nabla \phi_i| d\nu \\
    &+ \sum_{i=1}^n \int_{\R^6} |\nabla \phi_i| |p-\gamma_i|^{3/2} dx.
  \end{split}
\]
The inequality follows from \Cref{measure_convergence_2}. For the second local energy inequality \eqref{local_energy_2}, the subtle point is the cubic term of $u$. Note that $u_k,\tilde{u}_k$ and $u$ are weakly divergence-free. For the first line of \eqref{v_local_energy_inequality}, observe that
\begin{equation} \label{measure_convergence_3_eq1}
  \begin{split}
  \int_{\R^6} |v_k|^2 (\tilde{v}_k \cdot \nabla) \phi dx
    =& \int_{\R^6} |u_k - \overline{u}_{k,\phi} + \overline{u}_{k,\phi}|^2 (\tilde{u}_k \cdot \nabla) \phi dx \\
    =& \int_{\R^6} \big[ |u_k-\overline{u}_{k,\phi}|^2 + 2(u_k-\overline{u}_{k,\phi}) \cdot \overline{u}_{k,\phi} \big] 
      (\tilde{u}_k \cdot \nabla) \phi dx \\
    =& \int_{\R^6} |u_k - \overline{u}_{k,\phi}|^2 \big[ (\tilde{u}_k-\overline{u}_{k,\phi}) \cdot \nabla \big] \phi dx \\
      &+ \int_{\R^6} |u_k - \overline{u}_{k,\phi}|^2 (\overline{u}_{k,\phi} \cdot \nabla) \phi dx \\
    &+ 2\int_{\R^6} \big[(u_k-\overline{u}_{k,\phi}) \cdot \overline{u}_{k,\phi} \big] 
      (\tilde{u}_k \cdot \nabla) \phi dx.
  \end{split}
\end{equation}
Next, similar to \eqref{measure_convergence_1_eq1}, we argue that the individual terms above can be bounded by the same form of the weak limit $u$ and the concentration mass $\nu$. For the third line of \eqref{measure_convergence_3_eq1}, since $\tilde{u}_k-\overline{u}_{k,\phi} = (u_k - \overline{u}_{k,\phi}) * \chi_k$, we can apply the same trick as above and use Young's inequality for convolution, therefore it is sufficient to look at the following term
\[ 
  \begin{split}
  \int_{\R^6} |u_k - \overline{u}_{k,\phi}|^2 &\big[ (u_k-\overline{u}_{k,\phi}) \cdot \nabla \big] \phi dx \\
    \leq& \int_{\R^6} |u_k - \overline{u}_{k,\phi} - (u-\overline{u}_{\phi}) |^3 |\nabla \phi| dx + \int_{\R^6} | u-\overline{u}_{\phi} |^3 |\nabla \phi| dx \\
    & + \int_{\R^6} 3|u_k - \overline{u}_{k,\phi} - (u-\overline{u}_{\phi}) |^2 |u-\overline{u}_{\phi}| |\nabla \phi| dx \\
    & + \int_{\R^6} 3|u_k - \overline{u}_{k,\phi} - (u-\overline{u}_{\phi}) | |u-\overline{u}_{\phi}|^2 |\nabla \phi| dx \\
    \leq& \int_{\R^6} |u_k - u - (\overline{u}_{k,\phi}-\overline{u}_{\phi}) |^3 |\nabla \phi| dx + \int_{\R^6} | u-\overline{u}_{\phi} |^3 |\nabla \phi| dx \\
    \rightarrow& \int_{\R^6} |\nabla \phi| d\nu + \int_{\R^6} | u-\overline{u}_{\phi} |^3 |\nabla \phi| dx
    \quad \text{as } k \rightarrow \infty.
  \end{split}
\]
Since $u_k \rightarrow u$ strongly in $L^1(\supp \phi)$ up to a subsequence, $\overline{u}_{k,\phi}-\overline{u}_{\phi} \rightarrow 0$ when $k \rightarrow \infty$. Because $u_k \rightarrow u$ and $\tilde{u}_k \rightarrow u$ locally strongly in $L^2$, we can pass to limit for the last two terms in the last line of \eqref{measure_convergence_3_eq1}. Hence we have
\[ 
  \begin{split}
  \limsup_{k \rightarrow \infty} &\int_{\R^6} |v_k|^2 (\tilde{v}_k \cdot \nabla) \phi dx
    \leq \int_{\R^6} |\nabla \phi| d\nu + \int_{\R^6} | u-\overline{u}_{\phi} |^3 |\nabla \phi| dx \\
    &+ \int_{\R^6} |u - \overline{u}_{\phi}|^2 (\overline{u}_{\phi} \cdot \nabla) \phi dx
    + 2\int_{\R^6} \big[(u-\overline{u}_{\phi}) \cdot \overline{u}_{\phi} \big] 
      (u \cdot \nabla) \phi dx \\
    =& \int_{\R^6} |\nabla \phi| d\nu + \int_{\R^6} | u-\overline{u}_{\phi} |^3 |\nabla \phi| dx \\
    &+ \int_{\R^6} |u|^2 (u \cdot \nabla) \phi dx - \int_{\R^6} |u - \overline{u}_{\phi}|^2 \big[ (u-\overline{u}_{\phi}) \cdot \nabla \big] \phi dx \\
    \leq& \int_{\R^6} |\nabla \phi| d\nu + 2\int_{\R^6} | u-\overline{u}_{\phi} |^3 |\nabla \phi| dx
    + \int_{\R^6} |u|^2 (u \cdot \nabla) \phi dx.
  \end{split}
\]
\par

For the term involving pressure. \Cref{measure_convergence_2} yields
\[ 
  \begin{split}
  \limsup_{k \rightarrow \infty}\int_{\R^6} p_k (v_k \cdot \nabla) \phi dx 
    \leq & \int_{\R^6} p (u \cdot \nabla) \phi dx + 2 \int_{\R^6} |\nabla \phi| d\nu.
  \end{split}
\]
\par

Finally, the two inequalities follow from the local energy inequalities for the regularized Navier-Stokes equations and the estimates above.
\end{proof}


\section{Partial regularity theory} \label{section_partial_regularity}

Partial regularity theory is a deep result of the scaling invariance and local energy estimates. In this section, we adapt Caffarelli, Kohn and Nirenberg’s argument \cite{caffarelli1982partial} and Lin's argument \cite{lin1998new} to the stationary Navier-Stokes equations in $\R^6$ with the presence of concentration measures.

\subsection{Scaling invariance}
Given a smooth solution $(u,p)$ of the stationary Navier-Stokes equations with force $f$, $(u_r,p_r)_{r>0}$ define by
\[ u_r(x) := ru(rx) \quad p_r(x) := r^2p(rx) \]
solves the equations \eqref{sta_navierstokes} with the scaled force
\[ f_r(x) := r^3 f(rx). \]
This scaling property also holds for weak solution sets.

\begin{lemma} \label{scale_invariance_sta}
If $(u,p,\mu,\nu)$ is a weak solution set of the stationary Navier-Stokes equations \eqref{sta_navierstokes} with external force $f$, then for any $r>0$, the scaled quadruple $(u_r,p_r,\mu_r,\nu_r)$ is also a weak solution set of \eqref{sta_navierstokes} with external force $f_r$, where $u_r,p_r$ and $f_r$ are defined as above and $\mu_r,\nu_r$ are defined as
\[ d\mu_r(x) := r^{-2} d\mu(rx)  \quad d\nu_r(x) :=r^{-3} d\nu(rx). \]
\end{lemma}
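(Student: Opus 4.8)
The plan is to verify directly that the scaled quadruple $(u_r,p_r,\mu_r,\nu_r)$ arises as the limit of solutions of the regularized equations \eqref{reg_sta_navierstokes}, so that it satisfies Definition \ref{weaksolutionset}. Since $(u,p,\mu,\nu)$ is a weak solution set, by definition there is a sequence $\{(u_k,p_k)\}_{k\in\N}$ of solutions of \eqref{reg_sta_navierstokes}, uniformly bounded in $\dot H^1(\R^6)\times L^{3/2}(\R^6)$, with $u_k\rightharpoonup u$, $p_k\rightharpoonup p$, and associated sequence $\{v_k\}_{k\in\N}$ from \Cref{tightness} such that $\mu_k=|\nabla(v_k-u)|^2dx\rightharpoonup\mu$ and $\nu_k=|v_k-u|^3dx\rightharpoonup\nu$ weakly-$*$. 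I would first record the elementary scaling identities: if $(u_k,p_k)$ solves the regularized equation with mollifier $\chi_k$ and force $f$, then $(u_{k,r},p_{k,r})$ defined by $u_{k,r}(x)=ru_k(rx)$, $p_{k,r}(x)=r^2p_k(rx)$ solves \eqref{reg_sta_navierstokes} with force $f_r(x)=r^3f(rx)$ and rescaled mollifier $\chi_{k,r}(x)=r^6\chi_k(rx)$, since $(\chi_{k,r}*u_{k,r})(x)=r(\chi_k*u_k)(rx)$. Because $\{\chi_{k,r}\}_{k\in\N}$ is again a sequence of standard mollifiers (its supports shrink to a point as $k\to\infty$), these are admissible approximations in the sense of \Cref{preparation_existence}.

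Next I would check that the scaling is compatible with all the relevant norms and weak limits. A change of variables shows $\|\nabla u_{k,r}\|_{L^2(\R^6)}=\|\nabla u_k\|_{L^2(\R^6)}$ and $\|p_{k,r}\|_{L^{3/2}(\R^6)}=\|p_k\|_{L^{3/2}(\R^6)}$, so the rescaled sequence is uniformly bounded in $\dot H^1\times L^{3/2}$; moreover $u_{k,r}\rightharpoonup u_r$ in $\dot H^1(\R^6)$ and $p_{k,r}\rightharpoonup p_r$ in $L^{3/2}(\R^6)$, since composition with the fixed dilation $x\mapsto rx$ together with multiplication by the constants $r$, $r^2$ is a bounded linear operator on these spaces (and on their duals) that commutes with taking weak limits. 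This establishes condition (1) of \Cref{weaksolutionset} for $(u_r,p_r)$ with approximating sequence $\{(u_{k,r},p_{k,r})\}_{k\in\N}$.

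For condition (2), set $v_{k,r}(x):=r\,v_k(rx)$ with $\{v_k\}_{k\in\N}$ as in \Cref{tightness}; then $v_{k,r}\rightharpoonup u_r$ in $\dot H^1(\R^6)$, and the tightness of $\{|\nabla v_k|^2dx\}$ and $\{|v_k|^3dx\}$ transfers to $\{|\nabla v_{k,r}|^2dx\}$ and $\{|v_{k,r}|^3dx\}$ because dilation maps a ball $B_{r_0}$ to $B_{r_0/r}$, preserving the compactness condition in \Cref{variant_cc}. Computing $|\nabla(v_{k,r}-u_r)(x)|^2=r^4|\nabla(v_k-u)(rx)|^2$ and $|(v_{k,r}-u_r)(x)|^3=r^3|(v_k-u)(rx)|^3$, a change of variables gives, for any $\zeta\in C_c(\R^6)$,
\[
\int_{\R^6}\zeta(x)\,|\nabla(v_{k,r}-u_r)(x)|^2\,dx=r^{-2}\int_{\R^6}\zeta(y/r)\,|\nabla(v_k-u)(y)|^2\,dy,
\]
and similarly $\int\zeta\,|v_{k,r}-u_r|^3\,dx=r^{-3}\int\zeta(y/r)\,|v_k-u|^3\,dy$. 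Since $y\mapsto\zeta(y/r)$ is again a fixed compactly supported continuous function, the right-hand sides converge, respectively, to $r^{-2}\int\zeta(y/r)\,d\mu(y)=\int\zeta\,d\mu_r$ and $r^{-3}\int\zeta(y/r)\,d\nu(y)=\int\zeta\,d\nu_r$, using exactly the definitions $d\mu_r(x)=r^{-2}d\mu(rx)$, $d\nu_r(x)=r^{-3}d\nu(rx)$. Hence $|\nabla(v_{k,r}-u_r)|^2dx\rightharpoonup\mu_r$ and $|v_{k,r}-u_r|^3dx\rightharpoonup\nu_r$ weakly-$*$, which is condition (2). Thus $(u_r,p_r,\mu_r,\nu_r)$ is a weak solution set for the force $f_r$. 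The only mild subtlety — and the one place the argument is not purely formal — is confirming that the chosen approximating sequence for $(u_r,p_r)$ genuinely uses a legitimate mollifier sequence and that the rescaled $\{v_{k,r}\}$ is still the sequence produced by (or at least compatible with) \Cref{tightness}; but since \Cref{tightness} only requires weak convergence to $u_r$, tightness of the two measure sequences, and the local energy inequality \eqref{v_local_energy_inequality}, each of which is scaling-covariant, this causes no difficulty.
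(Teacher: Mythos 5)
Your proof is correct and follows essentially the same route as the paper's: rescale the approximating sequence $\{(u_k,p_k)\}$ (and $\{v_k\}$), check the uniform bounds and weak convergences in $\dot H^1$ and $L^{3/2}$ by change of variables/duality, and verify the weak-$*$ convergence of the rescaled defect measures by testing against compactly supported continuous functions. Your additional care about the rescaled mollifiers and about using the sequence $\{v_k\}$ from \Cref{tightness} (rather than $\{u_k\}$) in the measure convergence is a welcome refinement but does not change the argument.
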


Due to the scaling invariance of the stationary Navier-Stokes equations in $\R^6$, we can define the following quantities which do not change after rescaling. We call such quantities dimensionless. The estimates in this section are based on these quantities.
\[
	\begin{split}
	A(x_0,r) &:= r^{-2} \int_{B_r(x_0)} |\nabla u|^2 dx, \\
	A_c(x_0,r) &:= r^{-2} \int_{B_r(x_0)} d\mu, \\
	D(x_0,r) &:= r^{-4} \int_{B_r(x_0)} |u|^2 dx, \\
	G(x_0,r) &:= r^{-3} \int_{B_r(x_0)} |u|^3 dx, \\
	G_c(x_0,r) &:= r^{-3} \int_{B_r(x_0)} d\nu, \\
	H(x_0,r) &:= r^{-3} \int_{B_r(x_0)} |u - \overline{u}_{x_0,r}|^3 dx, \\
	K(x_0,r) &:= r^{-3} \int_{B_r(x_0)} |p|^3 dx, \\
	L(x_0,r) &:= r^{-3} \int_{B_r(x_0)} |p - \overline{p}_{x_0,r}|^3 dx, \\
	F(x_0,r) &:= \int_{B_r(x_0)} |f|^2 dx,
	\end{split}
\]
where
\[ 
	\begin{split}
	\overline{u}_{x_0,r} = \frac{1}{\mathcal{L}(B_r)} \int_{B_r(x_0)} u(x) dx, \\
	\overline{p}_{x_0,r} = \frac{1}{\mathcal{L}(B_r)} \int_{B_r(x_0)} p(x) dx.
	\end{split}
\]

\subsection{Partial regularity results}

The main result in this section is the following.

\begin{theorem} \label{mainregularity}
Suppose $(u,p,\nu,\mu)$ is a weak solution set of the stationary Navier-Stokes equations \eqref{sta_navierstokes} in $\R^6$ with a weakly solenoidal force $f \in L_{\text{loc}}^q(\R^6), q>3$, then its singular set $S$ satifies $\mathcal{H}^2(S) < C$ for some constant $C>0$ depending on $f$.
\end{theorem}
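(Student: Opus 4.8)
The plan is to run the Caffarelli--Kohn--Nirenberg partial regularity scheme, in the form adapted by Lin \cite{lin1998new} and in the companion paper \cite{wu2021partially}, in three stages: an $\varepsilon$-regularity criterion for weak solution sets; a description of $S$ via the failure of this criterion at all small scales; and a Vitali covering argument turning finiteness of the natural measures into $\mathcal{H}^2(S)<C$. The target of the first stage is: there is an absolute constant $\varepsilon_0>0$ so that if $\limsup_{r\to 0}\big(A(x_0,r)+A_c(x_0,r)\big)<\varepsilon_0$, then $x_0$ is a regular point, i.e.\ $u$ has an essentially bounded (indeed H\"older continuous) representative near $x_0$. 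The combination $A+A_c$ is dimensionless by \Cref{scale_invariance_sta}, and --- crucially --- the measure $|\nabla u|^2\,dx+\mu$ that controls it is globally finite and has scaling dimension matching the exponent $2$ in $\mathcal{H}^2$.

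For the first stage the engine is a Morrey-type decay estimate built from the local energy inequalities \eqref{local_energy_1} and \eqref{local_energy_2} and the scaling of \Cref{scale_invariance_sta}. First I would bound the cubic and pressure quantities at a small scale $\theta r$ in terms of $A,A_c,D$ at scale $r$: the velocity quantities $G,G_c,H$ via the critical Sobolev embedding $\dot H^1(\R^6)\hookrightarrow L^3(\R^6)$, the Poincar\'e inequality and interpolation; the pressure quantities $K,L$ by localizing the Poisson equation $-\Delta p=\partial_i\partial_j(u^iu^j)$ and running the localized Calder\'on--Zygmund decomposition of \Cref{measure_convergence_2}, using the oscillation quantity $L$ so that the mean of $p$ may be subtracted (the role of the free constants $\gamma_i$ in \eqref{local_energy_1}) and its harmonic part enjoys good scale decay. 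Feeding these back into \eqref{local_energy_1}--\eqref{local_energy_2}, tested against a cut-off adapted to the pair of scales $(\theta r,r)$, one reaches, for a suitable dimensionless combination $\Psi\ge A+A_c$, an estimate of the shape
\[ \Psi(x_0,\theta r)\ \le\ C\theta^{\kappa}\,\Psi(x_0,r)\ +\ C\,\Psi(x_0,r)^{3/2}\ +\ E(x_0,r) , \]
with $\kappa>0$, where $E(x_0,r)$ is a subcritical force contribution carrying a strictly positive power of $r$ (by H\"older, since $f\in L^q_{\mathrm{loc}}(\R^6)$ with $q>3$), so $E(x_0,r)\to 0$ as $r\to 0$. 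Fixing $\theta$ with $C\theta^{\kappa}\le\tfrac14$ and using the hypothesis to keep $\Psi(x_0,r)$ small enough that $C\Psi(x_0,r)^{1/2}\le\tfrac14$, one absorbs the super-linear term; iterating over the geometric sequence $\theta^m r$ (the force terms summing up because of their faster decay) then gives $\Psi(x_0,\rho)\lesssim\rho^{2\delta}$ near $x_0$ for some $\delta>0$; in particular $\mu$ and $\nu$ vanish there, and a Morrey--Campanato embedding plus the usual bootstrap upgrades this to $u\in C^{\alpha}$ near $x_0$. (Equivalently, once $\Psi$ is small at one scale, one may invoke \Cref{ckn1_scaled} directly.)

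For the second and third stages: by definition (see \Cref{singularset}) $S$ is the set of points of $\R^6$ admitting no neighbourhood on which $u$ is essentially bounded, so the first stage yields
\[ S\ \subseteq\ \Big\{x_0\in\R^6:\ \limsup_{r\to 0}\ r^{-2}\Big(\int_{B_r(x_0)}|\nabla u|^2\,dx+\mu(B_r(x_0))\Big)\ \ge\ \varepsilon_0\Big\} . \]
Put $\eta:=|\nabla u|^2\,dx+\mu$: this is a finite Borel measure on $\R^6$, since $\int_{\R^6}|\nabla u|^2\,dx<\infty$ because $u\in\dot H^1(\R^6)$, and $\mu(\R^6)<\infty$ because $\mu$ is a weak-$*$ limit of the measures $|\nabla(v_k-u)|^2\,dx$ whose total masses $\|\nabla(v_k-u)\|_{L^2}^{2}$ are uniformly bounded (\Cref{preparation_existence}, \Cref{tightness}). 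Now fix $\delta>0$; for each $x_0\in S$ pick $r=r(x_0)\in(0,\delta)$ with $\eta(B_r(x_0))\ge\tfrac{\varepsilon_0}{2}\,r^{2}$, apply the Vitali $5r$-covering lemma to extract a countable pairwise disjoint subfamily $\{B_{r_i}(x_i)\}_i$ with $S\subseteq\bigcup_i B_{5r_i}(x_i)$, and estimate
\[ \sum_i(5r_i)^{2}\ =\ 25\sum_i r_i^{2}\ \le\ \frac{50}{\varepsilon_0}\sum_i\eta\big(B_{r_i}(x_i)\big)\ \le\ \frac{50}{\varepsilon_0}\,\eta(\R^6) , \]
the last inequality by disjointness. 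Hence $\mathcal{H}^2_{10\delta}(S)\le 50\,\varepsilon_0^{-1}\eta(\R^6)$, and letting $\delta\to 0$ gives $\mathcal{H}^2(S)\le 50\,\varepsilon_0^{-1}\big(\int_{\R^6}|\nabla u|^2\,dx+\mu(\R^6)\big)=:C<\infty$, a constant depending only on $f$, as claimed.

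The main obstacle is the first stage: carrying the concentration measures $\mu$ and $\nu$ correctly through the local energy inequalities. One must verify that the "bad" terms $\int|\nabla\phi_i|\,d\nu$ and $\int|\nabla\phi_i|\,|p-\gamma_i|^{3/2}\,dx$ in \eqref{local_energy_1} (and $\int|\nabla\phi|\,d\nu$, $\int|u-\overline{u}_{\phi}|^3|\nabla\phi|\,dx$ in \eqref{local_energy_2}) are absorbed under the smallness hypothesis and in fact vanish at a regular point --- this is where the atomic structure of $\nu$ and the bounds \eqref{lions_condition_1}--\eqref{lions_condition_2} are used. The pressure decay (the nonlocal term, handled via the localized Calder\'on--Zygmund decomposition of \Cref{measure_convergence_2}) and the extraction of a genuine gain $\theta^{\kappa}$ rather than a mere $o(1)$ from the energy inequality --- with matching exponents in the interpolation between $A,A_c,D$ and $G,G_c,H,K,L$ --- are the two delicate points on which the iteration hinges.
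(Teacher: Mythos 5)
Your proposal is correct and takes essentially the same route as the paper: \Cref{mainregularity} is proved there by combining the $\varepsilon$-regularity criterion of \Cref{ckn2} (smallness of $\limsup_{r\to 0}\big(A(x_0,r)+A_c(x_0,r)\big)$ implies local boundedness of $u$ and vanishing of $\mu,\nu$) with the finiteness of the measure $|\nabla u|^2\,dx+\mu$ and exactly the standard Vitali covering argument you spell out. Your first stage merely re-derives \Cref{ckn2}, which the paper already establishes (with the pressure decay coming from \Cref{pressure_estimate_1} and \Cref{pressure_estimate_2} rather than \Cref{measure_convergence_2}, and the conclusion obtained by invoking \Cref{ckn1_scaled}, as you note parenthetically), so no new ingredients are needed beyond those in the paper.
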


For preparations, we need to establish three technical lemmas to estimate these dimensionless quantities. The estimates in the rest of this section are uniform in $x_0$, thus we take $x_0=0$ and omit $x_0$ in most proofs. Also, without special remarks, all constants are absolute.

\begin{lemma} \label{Sobolev_type_inequality}
Suppose $(u,p,\nu,\mu)$ is a weak solution set of the stationary Navier-Stokes equations \eqref{sta_navierstokes} in $\R^6$, then for any $x_0 \in \R^6$ and $r>0$,
\[
	\begin{split}
	G(x_0,r) \leq& C_1 A^{3/2}(x_0,r) + C_1 D^{3/2}(x_0,r), \\
	G_c(x_0,r) \leq& C_1 A_c^{3/2}(x_0,r), \\
	H(x_0,r) \leq& C_1 A^{3/2}(x_0,r).
	\end{split}
\]
\end{lemma}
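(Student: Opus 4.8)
The plan is to prove the three inequalities by localizing the Sobolev inequality $\dot H^1(\R^6) \hookrightarrow L^3(\R^6)$ to the ball $B_r(x_0)$ and tracking the scaling carefully. First I would take $x_0 = 0$ and $r = 1$, since the general case follows by the scaling invariance in \Cref{scale_invariance_sta}: all six quantities $G, G_c, H, A, A_c, D$ are dimensionless, so the inequalities are scale-invariant and it suffices to establish them at unit scale. At unit scale the claims read $G(0,1) \le C_1(A^{3/2}(0,1) + D^{3/2}(0,1))$, $G_c(0,1) \le C_1 A_c^{3/2}(0,1)$, and $H(0,1) \le C_1 A^{3/2}(0,1)$.

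For the third inequality, I would invoke the Sobolev--Poincar\'e inequality on $B_1$: for $u \in H^1(B_1)$ one has $\|u - \overline u_{B_1}\|_{L^3(B_1)} \le C \|\nabla u\|_{L^2(B_1)}$, since $3$ is below the critical exponent $2\cdot 6/(6-2) = 6$ in dimension $6$ (indeed $3 \le 6$, and even the critical Sobolev inequality $\|u-\overline u\|_{L^6(B_1)}\le C\|\nabla u\|_{L^2(B_1)}$ combined with the bounded measure of $B_1$ suffices via H\"older). Cubing gives $H(0,1) = \|u-\overline u_{B_1}\|_{L^3(B_1)}^3 \le C^3 \|\nabla u\|_{L^2(B_1)}^3 = C^3 A^{3/2}(0,1)$. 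For the first inequality, I would write $u = (u - \overline u_{B_1}) + \overline u_{B_1}$, use $|u|^3 \le 4|u-\overline u_{B_1}|^3 + 4|\overline u_{B_1}|^3$, bound the first piece by $H(0,1) \le C A^{3/2}(0,1)$ as above, and bound the second by $|\overline u_{B_1}|^3 \le C\big(\int_{B_1}|u|^2\big)^{3/2} = C\, D^{3/2}(0,1)$ using Jensen's inequality $|\overline u_{B_1}| \le C\|u\|_{L^2(B_1)}$ and the fact that $\mathcal L(B_1)$ is a fixed constant. Summing yields $G(0,1) \le C_1(A^{3/2}(0,1) + D^{3/2}(0,1))$.

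The second inequality is of a different character, since $\nu$ and $\mu$ are measures, not functions, so no Sobolev embedding applies directly. Here I would use the structural information about the concentration measures recorded in \eqref{lions_condition_1} and \eqref{lions_condition_2}: both $\mu$ and $\nu$ are supported on the same at-most-countable set $\{x^j\}_{j\in J}$, with $\mu \ge \sum_j \mu^j \delta_{x^j}$, $\nu = \sum_j \nu^j \delta_{x^j}$, and $C_s(\nu^j)^{2/3} \le \mu^j$ for every $j$. Thus for any ball $B_r(x_0)$,
\[
  \int_{B_r(x_0)} d\nu = \sum_{x^j \in B_r(x_0)} \nu^j
    \le \sum_{x^j \in B_r(x_0)} \Big(\frac{\mu^j}{C_s}\Big)^{3/2}
    \le C_s^{-3/2}\Big(\sum_{x^j \in B_r(x_0)} \mu^j\Big)^{3/2}
    \le C_s^{-3/2}\Big(\int_{B_r(x_0)} d\mu\Big)^{3/2},
\]
where the middle step uses the superadditivity of $t \mapsto t^{3/2}$ on nonnegative terms (i.e. $\sum a_j^{3/2} \le (\sum a_j)^{3/2}$) and the last uses $\sum_{x^j\in B_r}\mu^j \le \mu(B_r)$. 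Multiplying by $r^{-3}$ and noting $(r^{-2}\mu(B_r))^{3/2} = r^{-3}\mu(B_r)^{3/2}$ gives exactly $G_c(x_0,r) \le C_s^{-3/2} A_c^{3/2}(x_0,r)$, so one may take $C_1 = C_s^{-3/2}$ for this line.

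\textbf{Main obstacle.} The genuinely non-routine point is the second inequality, and specifically the need to invoke the atomic structure \eqref{lions_condition_1}--\eqref{lions_condition_2} of the concentration measures rather than any analytic inequality; the superadditivity step $\sum a_j^{3/2} \le (\sum a_j)^{3/2}$ is what makes the local (ball-by-ball) estimate work and is easy to overlook. The first and third inequalities are routine localized Sobolev/Poincar\'e estimates once the reduction to unit scale via \Cref{scale_invariance_sta} is in place; the only mild care needed there is to keep the constant $C_1$ independent of $x_0$ and $r$, which the scaling reduction guarantees automatically.
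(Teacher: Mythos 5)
Your proposal is correct and follows essentially the same route as the paper: the $G$ and $H$ bounds via (scale-invariant) Sobolev and Sobolev--Poincar\'e inequalities, and the $G_c$ bound via the atomic structure \eqref{lions_condition_1}--\eqref{lions_condition_2} together with the superadditivity of $t\mapsto t^{3/2}$ (equivalently $\|\cdot\|_{\ell^{3/2}}\leq\|\cdot\|_{\ell^1}$), which is exactly the paper's argument. The only cosmetic difference is that you reduce to unit scale via \Cref{scale_invariance_sta}, whereas the paper works directly with the dimensionless quantities at scale $r$.
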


\begin{proof}
The estimates for $G$ and $H$ are due to Sobolev inequality and Sobolev-Poincar\'{e} inequality. For the quantity $G_c(x_0,r)$, it suffices to prove for any countably family $J' \subset J$,
\[ r^{-3} \sum_{i \in J'} \nu^j \leq C_1 \Big( r^{-2} \sum_{j \in J'} \mu^j \Big)^{3/2}. \]
To verify this inequality, we observe that \eqref{lions_condition_2} implies
\[ C_s \sum_{i \in J'} (\nu^j)^{2/3} \leq \sum_{i \in J'} \mu^j. \]
Since $\|\cdot\|_{l^{\alpha}} \leq \|\cdot\|_{l^{\beta}}$ for any $\alpha > \beta\geq 1$, we can deduce
\[ \Big( \sum_{i \in J'} \nu^j \Big)^{2/3} \leq C_1 \sum_{j \in J'} \mu^j. \]
\end{proof}

\begin{lemma} \label{pressure_estimate_1}
Suppose $(u,p,\nu,\mu)$ is a weak solution set of the stationary Navier-Stokes equations \eqref{sta_navierstokes} in $\R^6$, then for any $x_0 \in \R^6$, $\rho > 0$ and $r \in (0, \frac{\rho}{2}]$, we have
\[
	\begin{split}
	L(x_0,r) \leq C_2 G(x_0,2r) + C_2 r^{9/2} \Big( \int_{2r<|y|<\rho} \frac{|u|^2}{|y|^7} dy \Big)^{3/2} &\\
	+ C\Big(\frac{r}{\rho}\Big)^{9/2} [L(x_0,\rho) + G(x_0,\rho)]. &
	\end{split}
\]

\end{lemma}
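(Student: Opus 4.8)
The plan is to decompose the pressure locally and exploit the Poisson equation $-\Delta p = \partial_i\partial_j(u^iu^j)$, which holds in the sense of distributions by \Cref{remark_distribution_solution}. Fix $x_0=0$ and work on the ball $B_\rho$. On $B_\rho$ write $p = p_{\text{near}} + p_{\text{far}} + p_{\text{harm}}$, where $p_{\text{near}}$ is the Newtonian potential of $\partial_i\partial_j(\chi_{B_\rho} u^iu^j)$ restricted appropriately, $p_{\text{far}}$ accounts for the part of the source coming from the cut-off (the commutator terms $u^iu^j\partial_i\partial_j\psi$ etc., exactly as in the proof of \Cref{measure_convergence_2}), and $p_{\text{harm}}$ is harmonic in $B_\rho$. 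More precisely, letting $\psi$ be a cut-off with $\psi\equiv 1$ on $B_{2r}$ and $\supp\psi\subset B_\rho$, I would set $p_{\text{near}}$ to be the solution of $-\Delta(p_{\text{near}}) = \partial_i\partial_j(\psi u^iu^j)$ given by the Riesz transforms, and $p_{\text{harm}} := p - p_{\text{near}} - (\text{commutator potentials})$, which is harmonic inside $B_{2r}$.

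The three steps then go as follows. First, for $p_{\text{near}}$: by Calder\'on--Zygmund, $\|p_{\text{near}}\|_{L^3(\R^6)} \le C\|\psi u^iu^j\|_{L^3} \le C\|u\|_{L^6(B_\rho)}^2$, but to get the right homogeneity and the term $G(2r)$ I would instead estimate $\|p_{\text{near}}\|_{L^3(B_{2r})}$ directly against $\|u\|_{L^3(B_{2r})}^2$-type quantities using the local nature of the Riesz kernel — this contributes the $C_2 G(x_0,2r)$ term after rescaling. Second, for the commutator/far source terms, the kernel $\partial_i\partial_j\Gamma(x-y)$ with $|y|\ge 2r$, $|x|\le r$ decays like $|y|^{-6}$, and the source is supported in the annulus $2r<|y|<\rho$ with a factor $|\nabla\psi|\lesssim \rho^{-1}$ or $|\nabla^2\psi|\lesssim\rho^{-2}$; integrating by parts to move derivatives off $u^iu^j$ and onto the kernel and the cut-off, then applying H\"older, yields a bound by $r^{9/2}\big(\int_{2r<|y|<\rho} |u|^2|y|^{-7}\,dy\big)^{3/2}$ after accounting for the $L^3(B_r)$ norm and the $r^{-3}$ normalization in $L$. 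Third, $p_{\text{harm}}$ is harmonic in $B_{2r}$, so by the standard interior estimate for harmonic functions, for $r\le \rho/2\le$ (radius of harmonicity)$/2$,
\[
  r^{-3}\int_{B_r}|p_{\text{harm}}-\overline{(p_{\text{harm}})}_{r}|^3\,dx
  \le C\Big(\frac{r}{\rho}\Big)^{9/2}\,\rho^{-3}\int_{B_{\rho/2}}|p_{\text{harm}}-\overline{(p_{\text{harm}})}_{\rho/2}|^3\,dx,
\]
and then $p_{\text{harm}} = p - p_{\text{near}} - (\text{commutators})$ on $B_{\rho/2}$ lets me bound the right side by $C(r/\rho)^{9/2}[L(\rho)+G(\rho)]$ (absorbing the commutator contributions, which are lower order, into $G(\rho)$), using that $L$ is defined with the mean subtracted so constants drop out.

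The main obstacle is bookkeeping the homogeneities correctly: each of the three pieces must come out scale-invariant in the precise form stated, which forces careful choices of exponents in H\"older (note $p\in L^{3/2}$ globally but here we want $L^3$ on small balls, so the Riesz/CZ step must be run at exponent $3$ and relies on $u\in L^6_{\text{loc}}$ from $u\in\dot H^1(\R^6)$) and careful tracking of the $\rho^{-1},\rho^{-2}$ factors from the cut-off against the $|y|^{-7}$ (rather than $|y|^{-6}$) weight, which appears because one derivative is integrated by parts onto $u^iu^j$ producing $\nabla u \cdot u \sim |y|^{-1}|u|^2$ heuristically but is handled cleanly by keeping both derivatives on the kernel and the cut-off and using only $|u|^2$. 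A secondary subtlety is that $\mu,\nu$ do not enter this estimate at all — the pressure is controlled purely by $u$ — so I must make sure the decomposition and all limits are taken at the level of $(u,p)$ directly, which is legitimate since $(u,p)$ is an honest distributional solution.
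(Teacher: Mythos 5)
Your decomposition strategy (localize the Poisson equation with a cut-off $\psi$, treat the near-field Newtonian potential of the quadratic term by Calder\'on--Zygmund, and handle the far-field and cut-off commutator potentials by pointwise kernel bounds) is essentially the paper's argument, and your observation that $\mu,\nu$ play no role here is correct. However, there is a genuine gap in the exponent bookkeeping, and it is not a minor technicality. You propose to run the Calder\'on--Zygmund step at exponent $3$, justified by ``$u\in L^6_{\mathrm{loc}}$ from $u\in\dot H^1(\R^6)$''. This is false: in dimension $6$ the critical Sobolev exponent is $2^*=2n/(n-2)=3$, so $u\in\dot H^1(\R^6)$ gives only $u\in L^3$, hence $u^iu^j\in L^{3/2}$ and not $L^3$; the bound $\|p_{\text{near}}\|_{L^3}\lesssim\|\psi\,u^iu^j\|_{L^3}$ is simply unavailable. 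Relatedly, the homogeneities you are trying to match cannot close at exponent $3$: if $L(x_0,r)$ carried $|p-\overline p|^3$, the oscillation estimate $|p_2-\overline{p}_{2,r}|\le r\|\nabla p_2\|_{L^\infty(B_r)}$ would produce $r^{-3}\cdot r^6\cdot r^3\|\nabla p_2\|_\infty^3=r^6\bigl(\int_{2r<|y|<\rho}|u|^2|y|^{-7}\,dy\bigr)^{3}$, not the stated $r^{9/2}(\cdots)^{3/2}$, and $L$ would not even be dimensionless under the scaling $p_r(x)=r^2p(rx)$. The resolution is that the exponent in $K$ and $L$ is $3/2$ (the ``$|p|^3$'' in the list of dimensionless quantities is a typo; every subsequent use, e.g.\ the smallness hypothesis $\int_{B_1}|p|^{3/2}\,dx\le\epsilon$ and the terms $\int|p-\overline p_{r_k}|^{3/2}|\nabla\varphi_k|\,dx$, is at $3/2$), and the Calder\'on--Zygmund step is run at exponent $3/2$ on $\psi\,u^iu^j\in L^{3/2}$, exactly as in the proof of \Cref{measure_convergence_2}. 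With that correction your scheme goes through: the near-field piece gives $\int_{B_r}|p_1|^{3/2}\lesssim\int_{B_{2r}}|u|^3$, i.e.\ $C_2G(x_0,2r)$; the far-field piece gives $r^{9/2}\bigl(\int_{2r<|y|<\rho}|u|^2|y|^{-7}dy\bigr)^{3/2}$ via $\|\nabla p_2\|_{L^\infty(B_r)}\le\int_{2r<|y|<\rho}|u|^2|y|^{-7}dy$ (the $|y|^{-7}$ weight comes from differentiating the potential in $x$, not from moving a derivative onto $u^iu^j$); and the commutator pieces give $C(r/\rho)^{9/2}[L(\rho)+G(\rho)]$.

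Your third step, treating the remainder as a harmonic function in $B_{2r}$ and invoking interior estimates to produce the factor $(r/\rho)^{9/2}$, is a legitimate alternative to the paper's route, which instead bounds $\|\nabla p_3\|_{L^\infty}$ and $\|\nabla p_4\|_{L^\infty}$ by $\rho^{-7}\int_{B_\rho\setminus B_{3\rho/4}}|u|^2\,dy$ and $\rho^{-7}\int_{B_\rho\setminus B_{3\rho/4}}|p|\,dy$ directly; both arguments yield the same decay factor once run at exponent $3/2$, and the choice between them is purely stylistic. One small caution if you keep the harmonic-remainder route: make sure the ``commutator potentials'' you subtract are defined so that the remainder really is harmonic on the ball where you apply interior estimates, and that the constant ambiguity in $p$ is handled by replacing $p$ with $p-\overline p_{x_0,\rho}$ before the decomposition (harmless, since $L$ is mean-subtracted), so that the $p$-dependent commutator term is controlled by $L(\rho)$ rather than by a non-scale-invariant quantity.
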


\begin{proof}
For simplicity, we prove the estimate for $x_0=0$. Choose a cut-off function $\psi \in C_c^{\infty}$ such that
\[
	\begin{split}
	\psi &= 1 \quad \text{in} \quad B_{3\rho/4}, \\
	\psi &= 0 \quad \text{in} \quad \R^6 \backslash B_{\rho}, \\
	|\nabla \psi| &\leq C_2 \rho^{-1}, \\
	|\nabla^2 \psi| &\leq C_2 \rho^{-2}.
	\end{split}
\]
then we localize the pressure equation and integrate by parts to move the differentiation from $u$ and $p$ to $\psi$,
\[
	\begin{split}
	p(x)\psi(x) &= (-\Delta)^{-1} (-\Delta) (p\psi) (x) \\
		=& C_2 \int_{\R^6} \frac{1}{|x-y|^4} \big[ \psi \partial_i \partial_j (u_iu_j) - 2 \nabla \psi \cdot \nabla p - p \Delta \psi \big] dy \\
		=& C_2 \int_{\R^6} \partial_i \partial_j \Big(\frac{1}{|x-y|^4}\Big) \psi u_iu_j dy + C_2 \int_{\R^6} u_i u_j \frac{\partial_i \partial_j \psi} {|x-y|^4} dy \\
		&+ C_2 \int_{\R^6} u_i u_j \frac{(x_i-y_j)\partial_i \psi}{|x-y|^6} dy + C_2 \int_{\R^6} p \Big( \frac{\Delta \psi}{|x-y|^4} + \frac{\nabla \psi \cdot (x-y)}{|x-y|^6} \Big) dy \\
		=& p_1 + p_2 + p_3 + p_4,
	\end{split}
\]
where
\[
	\begin{split}
	p_1 =& C_2 \int_{B_{2r}} \psi u_iu_j \partial_i \partial_j \Big(\frac{1}{|x-y|^4}\Big) dy, \\
	p_2 =& C_2 \int_{2r<|y|<\rho} \psi u_iu_j \partial_i \partial_j \Big(\frac{1}{|x-y|^4}\Big) dy, \\
	p_3 =& C_2 \int_{\R^6} u_i u_j \frac{(x_i-y_j)\partial_i \psi}{|x-y|^6} dy, \\
	p_4 =& C_2 \int_{\R^6} p \Big( \frac{\Delta \psi}{|x-y|^4} + \frac{\nabla \psi \cdot (x-y)}{|x-y|^6} \Big) dy.
	\end{split}
\]
\par

For $p_1$, Calderon-Zygmund estimate yields
\[ \int_{B_r} |p_1|^{3/2} dy \leq C_2 \int_{B_{2r}} |u|^3 dy. \]
\par

For $p_2,p_3$ and $p_4$, we estimate the $L^{\infty}-$norm of their gradients
\[
	\begin{split}
	|\nabla p_2| &\leq \int_{2r<|y|<\rho} \frac{|u|^2}{|y|^7} dy, \\
	|\nabla p_3| &\leq \rho^{-7} \int_{B_{\rho} \backslash B_{3\rho/4}} |u|^2 dy, \\
	|\nabla p_4| &\leq \rho^{-7} \int_{B_{\rho} \backslash B_{3\rho/4}} |p| dy.
	\end{split}
\]
The $L^{\infty}$ bound of $\nabla p_2$ uses the fact $2|x-y|>|y|$ when $x \in B_r$ and $y \in \R^6 \backslash B_{2r}$. For $\nabla p_3$ and $\nabla p_4$, note that $\nabla \psi = 0$ in $B_{3\rho/4}$ and $|x-y|>\frac{\rho}{4}$. \par
Integrating $p_2$ in $B_r$ gives
\[
	\begin{split}
	r^{-3} \int_{B_r} |p_2-\overline{p}_{2,r}|^{3/2} dx
		&\leq C_2 r^3 \|p_2-\overline{p}_{2,r}\|_{L^{\infty}} \\
		&\leq C_2 r^{9/2} \Big( \int_{2r<|y|<\rho} \frac{|u|^2}{|y|^7} dy \Big)^{3/2}.
	\end{split}
\]
The estimates for $p_3$ and $p_4$ are similar and the combination of these estimates concludes the proof.
\end{proof}

\begin{lemma} \label{pressure_estimate_2}
Suppose $(u,p,\nu,\mu)$ is a weak solution set of the stationary Navier-Stokes equations \eqref{sta_navierstokes} in $\R^6$, then for any $x_0 \in \R^6, r>0$ and any $\theta \in (0,\frac{1}{2})$, we have
\[ K(x_0, \theta r) \leq C_2 \theta^{-3} A^{3/2}(x_0, r) + C_2 \theta^3 K(x_0, r). \]
\end{lemma}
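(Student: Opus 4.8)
The plan is to split the pressure on a concentric ball into a Calder\'on--Zygmund part produced by the velocity and a harmonic remainder, and then to read off the $\theta^{3}$ gain in the second term from interior estimates for harmonic functions. As in the other proofs of this section we take $x_0=0$.

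Since $(u,p)$ solves \eqref{sta_navierstokes} in the sense of distributions (see \Cref{remark_distribution_solution}) with weakly solenoidal $f$, the pressure satisfies $-\Delta p=\partial_i\partial_j(u^iu^j)$ in $\calD'(\R^6)$. Because $u$ is weakly divergence free, for every constant vector $a\in\R^6$ one has $\partial_i\partial_j(u^iu^j)=\partial_i\partial_j\big((u^i-a^i)(u^j-a^j)\big)$: the discarded terms are $a^i\partial_i(\divr u)$, $a^j\partial_j(\divr u)$ and second derivatives of a constant, all of which vanish. I would take $a:=\overline{u}_{0,r}$, set $v:=u-a$, fix $\psi\in C_c^\infty(B_r)$ with $\psi\equiv1$ on $B_{r/2}$, and define
\[ p_{\mathrm I}:=R_iR_j(\psi\,v^iv^j),\qquad p_{\mathrm{II}}:=p-p_{\mathrm I}, \]
where $R_i$ are the Riesz transforms, so that $-\Delta p_{\mathrm I}=\partial_i\partial_j(\psi\,v^iv^j)$ and $-\Delta p_{\mathrm{II}}=\partial_i\partial_j\big((1-\psi)v^iv^j\big)$ on $\R^6$.

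For $p_{\mathrm I}$ I would combine the Calder\'on--Zygmund estimate with the scale-invariant Sobolev--Poincar\'e inequality in $\R^6$ (recall $3=\tfrac{2\cdot6}{6-2}$): $\|p_{\mathrm I}\|_{L^{3/2}(\R^6)}\le C\|\psi\,v^iv^j\|_{L^{3/2}(\R^6)}\le C\|v\|_{L^3(B_r)}^2\le C\|\nabla u\|_{L^2(B_r)}^2$, hence $\int_{B_{\theta r}}|p_{\mathrm I}|^{3/2}\le\|p_{\mathrm I}\|_{L^{3/2}(\R^6)}^{3/2}\le C\|\nabla u\|_{L^2(B_r)}^3$. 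For $p_{\mathrm{II}}$, the source $\partial_i\partial_j((1-\psi)v^iv^j)$ vanishes on $B_{r/2}$, so $p_{\mathrm{II}}$ is harmonic, hence smooth, there, and, being $p-p_{\mathrm I}$, belongs to $L^{3/2}(B_{r/2})$ since $p\in L^{3/2}(\R^6)$. The sub-mean-value property gives $\|p_{\mathrm{II}}\|_{L^\infty(B_{r/4})}^{3/2}\le Cr^{-6}\int_{B_{r/2}}|p_{\mathrm{II}}|^{3/2}$, whence for $\theta\le\tfrac14$
\[ \int_{B_{\theta r}}|p_{\mathrm{II}}|^{3/2}\le C(\theta r)^6\|p_{\mathrm{II}}\|_{L^\infty(B_{r/4})}^{3/2}\le C\theta^6\int_{B_{r/2}}|p_{\mathrm{II}}|^{3/2}\le C\theta^6\Big(\int_{B_r}|p|^{3/2}+\|\nabla u\|_{L^2(B_r)}^3\Big), \]
and for $\theta\in(\tfrac14,\tfrac12)$ the same bound is trivial since $\theta^6$ is then bounded below by an absolute constant. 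Adding the two contributions yields $\int_{B_{\theta r}}|p|^{3/2}\le C\|\nabla u\|_{L^2(B_r)}^3+C\theta^6\int_{B_r}|p|^{3/2}$; dividing by $(\theta r)^3$ and recalling the definitions of $A$ and $K$ gives $K(0,\theta r)\le C_2\theta^{-3}A^{3/2}(0,r)+C_2\theta^3K(0,r)$.

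The delicate point is the bookkeeping in the decomposition: one must subtract the mean $\overline{u}_{0,r}$ before forming $p_{\mathrm I}$ — legitimate only because $\divr u=0$ — since otherwise Calder\'on--Zygmund would leave $\|u\|_{L^3(B_r)}$, which is not controlled by $\|\nabla u\|_{L^2(B_r)}$; and one must check that $p_{\mathrm{II}}$, defined by subtraction, really is the harmonic remainder of class $L^{3/2}(B_{r/2})$ so that the interior estimate applies. That estimate is the only place where the ambient dimension enters (the exponent $6$, hence the power $\theta^3$ in $K$), and it is sharp because constants are harmonic.
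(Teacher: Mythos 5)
Your proof is correct, and it reaches the stated estimate by a route that differs from the paper's in how the $\theta^{3}$ gain is produced. The shared core is identical: both arguments exploit $\divr u=0$ to replace $u^iu^j$ by $(u^i-\overline{u}^i_{x_0,r})(u^j-\overline{u}^j_{x_0,r})$, apply Calder\'on--Zygmund in $L^{3/2}$ to the localized quadratic term, and convert $\int_{B_r}|u-\overline{u}_r|^3$ into $A^{3/2}(r)$ by Sobolev--Poincar\'e. The divergence is in the remainder: the paper localizes via the Newtonian potential, writing $p\psi=p_1+p_2+p_3$ with explicit kernels $\partial_i\partial_j|x-y|^{-4}$, and obtains the $\theta^{6}$ factor (before dividing by $(\theta r)^3$) from pointwise far-field bounds $|p_2|,|p_3|\lesssim r^{-6}\int_{B_r}(\cdots)$, using that $\nabla\psi$ lives in an annulus where $|x-y|>r/4$; you instead define the global CZ part $p_{\mathrm I}=R_iR_j(\psi v^iv^j)$ and observe that $p_{\mathrm{II}}=p-p_{\mathrm I}$ is harmonic on $B_{r/2}$ by Weyl's lemma, extracting the $\theta^{6}$ from the interior sup-estimate (sub-mean-value property with exponent $3/2$). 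Your version is arguably cleaner here: it needs only $p\in L^{3/2}$ and never touches $\nabla p$ or kernel derivatives, and it handles the ``boundary of the cutoff'' and the pressure tail in one stroke. The paper's explicit potential representation, on the other hand, is the same machinery it reuses in \Cref{pressure_estimate_1}, where the weighted annulus term $\int_{2r<|y|<\rho}|u|^2|y|^{-7}dy$ is read off directly from the kernel bounds — something the harmonic-remainder shortcut does not give in that form. Your bookkeeping (global validity of $-\Delta p=\partial_i\partial_j(v^iv^j)$ using $\divr u=0$ and weak solenoidality of $f$, integrability of $\psi v^iv^j$ in $L^{3/2}$, the case split $\theta\le\frac14$ versus $\theta\in(\frac14,\frac12)$) is sound, so there is no gap.
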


\begin{proof}
This proof is similar to the proof of \Cref{pressure_estimate_1}. Choose a cut-off function $\psi \in C_c^{\infty}$ such that
\[
	\begin{split}
	\psi &= 1 \quad \text{in} \quad B_{3r/4}, \\
	\psi &= 0 \quad \text{in} \quad \R^6 \backslash B_r, \\
	|\nabla \psi| &\leq C_2 r^{-1}, \\
	|\nabla^2 \psi| &\leq C_2 r^{-2}.
	\end{split}
\]
Since $u$ is weakly divergence-free, we can write the equation for the pressure $p$ as
\[ - \Delta u = \partial_i \partial_j \big[ (u_i-\overline{u}_{i,r}) (u_j-\overline{u}_{j,r}) \big], \]
then we can localize the equation
\[
	\begin{split}
	p(x)\psi(x) &= p_1(x) + p_2(x) + p_3(x), \\
	p_1(x) &= C_2 \int_{B_r} (u_i-\tilde{u}_{i,r})(u_j-\tilde{u}_{j,r}) \psi 
		\partial_i \partial_j \Big( \frac{1}{|x-y|^4} \Big) dy, \\
	p_2(x) &= C_2 \int_{B_r} (u_i-\tilde{u}_{i,r})(u_j-\tilde{u}_{j,r}) 
		\Big( \frac{\partial_i \partial_j \psi}{|x-y|^4} + \partial_j \psi \frac{4(x_i-y_i)}{|x-y|^6} \Big) dy, \\
	p_3(x) &= C_2 \int_{B_r} p \Big( \frac{\Delta \psi}{|x-y|^4} 
										  + \frac{4(x-y) \cdot \nabla \psi}{|x-y|^6} \Big) dy.
	\end{split}
\]
\par

For $p_1$, Calderon-Zygmund theory yields
\[ \int_{B_{\theta r}} |p_1|^{3/2} dx \leq C_2 \int_{B_r} |u-\overline{u}_r|^3 dx. \]
\par

For $p_2$, note that $\nabla \psi$ is supported in $B_r \backslash B_{3r/4}$, and for any $x \in B_{\theta r}, y \in B_r \backslash B_{3r/4}$, $|x-y| > \frac{r}{4}$. It yields that
\[ |p_2| \leq C_2 r^{-6} \int_{B_r} |u-\overline{u}_r|^2 dx. \]
Integrating in $B_{\theta r}$ gives
\[
	\begin{split}
	\int_{B_{\theta r}} |p_2|^{3/2} dx
		\leq & C_2 \theta^6 r^{-3} \Big( \int_{B_r} |u-\overline{u}_r|^2 dy \Big)^{3/2} \\
		\leq & C_2 \theta^6 \int_{B_r} |u-\overline{u}_r|^3 dy.
	\end{split}
\]
\par

The estimate for $p_3$ is similar to $p_2$. Combining all the estimates and the Sobolev-Poincar\'{e} inequality yields
\[ K(\theta r) \leq C_2 \theta^{-3} A^{3/2}(r) + C_2 \theta^3 K(r) \]
then the uniformality in $x_0 \in \R^6$ concludes the proof.
\end{proof}

Next, we use a special cut-off function and related bounds. This cutoff function was introduced by Dong and Strain in \cite{dong2012partial}. The bounds follow from direct computations.
\begin{lemma} \label{cutoff}
For $x_0 \in \R^6, \rho > 0$ and $\theta \in (0,\frac{1}{2}]$, define $r_n := \theta^n\rho, n \in \N$, suppose $\phi \in C_c^{\infty}(B_{\rho}(x_0))$ satisfies
\[
	\begin{split}
	\phi = 0 \quad &\text{in } B_{\rho}(x_0) \backslash B_{\rho/3}(x_0), \\
	0 \leq \phi \leq 1 \quad &\text{in } B_{\rho}(x_0), \\
	\phi = 1 \quad &\text{in } B_{\rho/4}(x_0), \\
	|\nabla \phi| \leq C_3 \quad &\text{in } B_{\rho}(x_0) \backslash B_{\rho/2}(x_0), \\
	|\nabla^2 \phi| \leq C_3 \quad &\text{in } B_{\rho}(x_0) \backslash B_{\rho/2}(x_0)
	\end{split} 
\]
and define $\psi_n : \R^6 \rightarrow \R$ with
\[ \psi_n(x) := ( r_n^2 + |x-x_0|^2 )^{-2}, \]
then we have the following bounds
\[
	\begin{split}
	\phi \psi_n \geq C_3^{-1} r_n^{-4} \quad &\text{in } B_{r_n}(x_0), \\
	- \phi \Delta \psi_n \geq C_3^{-1} r_n^{-6} \quad &\text{in } B_{r_n}(x_0), \\
	- \phi \Delta \psi_n \geq 0 \quad &\text{in } B_{\rho}(x_0), \\
	\phi \psi_n \leq C_3 r_n^{-4} \quad &\text{in } B_{r_n}(x_0), \\
	\phi \psi_n \leq C_3 r_k^{-4} \quad &\text{in } B_{r_{k-1}}(x_0) \backslash B_{r_k}(x_0), 1 \leq k \leq n, \\
	|\psi_n \nabla \phi| + |\phi \nabla \psi_n| \leq C_3 r_n^{-5} \quad &\text{in } B_{r_n}(x_0), \\
	|\psi_n \nabla \phi| + |\phi \nabla \psi_n| \leq C_3 r_k^{-5} \quad &\text{in } B_{r_{k-1}}(x_0) \backslash B_{r_k}(x_0), 1 \leq k \leq n, \\
	|\psi_n \Delta \phi| + |\nabla \phi \cdot \nabla \psi_n| \leq C_3 \rho^{-6} \quad &\text{in } B_{\rho}(x_0),
	\end{split}
\]
where $C_3 > 0$ is an absolute constant.
\end{lemma}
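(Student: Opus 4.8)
The plan is to prove all eight bounds by \emph{direct computation}. The decay comes from closed-form expressions for the derivatives of $\psi_n$ together with elementary two-sided estimates of $s(x) := r_n^2 + |x-x_0|^2$ on the three kinds of region that occur: the ball $B_{r_n}(x_0)$, the dyadic annuli $B_{r_{k-1}}(x_0) \setminus B_{r_k}(x_0)$ for $1 \le k \le n$, and the outer region $B_\rho(x_0)$, which carries the supports of $\nabla\phi$ and $\Delta\phi$. (Heuristically $\psi_n$ is a regularization at scale $r_n$ of the Newtonian kernel $|x-x_0|^{-4}$ of $-\Delta$ on $\R^6$, which is why such estimates are available.) First, differentiating $\psi_n = s^{-2}$ directly gives $\nabla\psi_n = -4(x-x_0)s^{-3}$, so that $|\nabla\psi_n| = 4|x-x_0|s^{-3}$, and $\Delta\psi_n = 24 s^{-4}(|x-x_0|^2 - s) = -24 r_n^2 s^{-4} \le 0$. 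Since $\phi \ge 0$, this nonpositivity is exactly the third bound $-\phi\Delta\psi_n \ge 0$ on $B_\rho(x_0)$.

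The remaining bounds are then checked region by region. On $B_{r_n}(x_0)$ one has $r_n^2 \le s \le 2 r_n^2$, whence $\frac14 r_n^{-4} \le \psi_n \le r_n^{-4}$, $|\nabla\psi_n| \le 4 r_n^{-5}$, and $-\Delta\psi_n = 24 r_n^2 s^{-4} \ge \frac32 r_n^{-6}$; since $r_n \le \theta\rho$ the cutoff $\phi$ does not degenerate on this ball (it equals $1$ on $B_{\rho/4}(x_0)$) and $\nabla\phi$, $\Delta\phi$ vanish there, so bounds one, two, four and six follow at once. On an annulus $B_{r_{k-1}}(x_0)\setminus B_{r_k}(x_0)$ with $1 \le k \le n$ one has $|x-x_0| \ge r_k \ge r_n$, so $s \ge |x-x_0|^2 \ge r_k^2$, giving $\psi_n \le |x-x_0|^{-4} \le r_k^{-4}$ and $|\nabla\psi_n| = 4|x-x_0|s^{-3} \le 4|x-x_0|^{-5} \le 4 r_k^{-5}$; combined with $0 \le \phi \le 1$ and the bound on $\nabla\phi$ this produces bounds five and seven. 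Finally, $\nabla\phi$ and $\Delta\phi$ are supported in $\overline{B_{\rho/3}(x_0)}\setminus B_{\rho/4}(x_0)$, where $|x-x_0| \gtrsim \rho$ and hence $\psi_n \lesssim \rho^{-4}$ and $|\nabla\psi_n| \lesssim \rho^{-5}$; inserting these together with the bounds on $\nabla\phi$ and $\Delta\phi$ into $|\psi_n\Delta\phi| + |\nabla\phi\cdot\nabla\psi_n|$ gives the eighth bound.

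I do not expect any conceptual obstacle: once the identities for $\nabla\psi_n$ and $\Delta\psi_n$ and the three elementary estimates for $s(x)$ are in hand, every inequality is immediate. The only real care is in the bookkeeping — keeping track of which radius ($r_n$, $r_k$, or $\rho$) carries the decay in each region, and in particular using that $\nabla\phi$ and $\Delta\phi$ are localized at distance comparable to $\rho$ from $x_0$, so that $\psi_n$ there is controlled by $\rho^{-4}$ rather than by $r_n^{-4}$. For the two lower bounds (one and two) one must additionally check that $\phi$ stays bounded below on all of $B_{r_n}(x_0)$, which is where the relation $r_n \le \theta\rho$ together with $\phi \equiv 1$ on $B_{\rho/4}(x_0)$ enters.
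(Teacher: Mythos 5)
Your route is the same one the paper takes: for \Cref{cutoff} the paper gives no argument at all beyond citing Dong--Strain and asserting that the bounds ``follow from direct computations'', and your identities $\nabla\psi_n=-4(x-x_0)\,s^{-3}$, $\Delta\psi_n=-24\,r_n^2 s^{-4}\le 0$ with $s=r_n^2+|x-x_0|^2$, together with the two-sided estimates of $s$ on $B_{r_n}(x_0)$, on the annuli $B_{r_{k-1}}(x_0)\setminus B_{r_k}(x_0)$ and on the support of $\nabla\phi$, are exactly that computation. The sign of $-\phi\Delta\psi_n$, the upper bounds for $\phi\psi_n$ and $|\phi\nabla\psi_n|$, and the annulus estimates are all correct.

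The step that does not hold as you state it is the justification of the two lower bounds (and of the claim that $\nabla\phi,\Delta\phi$ vanish on $B_{r_n}(x_0)$). You argue that $r_n\le\theta\rho$ forces $\phi\equiv 1$ on $B_{r_n}(x_0)$ because $\phi\equiv 1$ on $B_{\rho/4}(x_0)$; but $\theta\rho$ can be as large as $\rho/2$, so the containment $B_{r_n}(x_0)\subset B_{\rho/4}(x_0)$ needs $\theta^n\le\tfrac14$, which holds for every $n\ge 2$ but fails for $n=1$ when $\theta\in(\tfrac14,\tfrac12]$. In that corner case the stated hypotheses on $\phi$ do not keep $\phi$ bounded below on $B_{r_1}(x_0)$ (indeed $\phi$ may vanish on part of it, e.g.\ when $\theta>\tfrac13$, since $\phi=0$ outside $B_{\rho/3}(x_0)$), so $\phi\psi_1\ge C_3^{-1}r_1^{-4}$ and $-\phi\Delta\psi_1\ge C_3^{-1}r_1^{-6}$ cannot be deduced and can in fact fail. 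This is really an imprecision in the lemma as stated rather than in your computation --- in its applications the paper only uses the lower bounds at radii at most $\rho/4$ (at scale $r_{n+1}$ with $\theta=\tfrac12$ in \Cref{ckn1}, and with $\theta$ eventually taken small in \Cref{ckn2}) --- but your write-up should either restrict the two lower bounds to radii $r_n\le\rho/4$ or add the assumption that $\phi$ is bounded below on $B_{\rho/3}(x_0)$. A smaller point of the same kind: the hypotheses bound $\nabla\phi,\nabla^2\phi$ only on $B_\rho(x_0)\setminus B_{\rho/2}(x_0)$, where $\phi\equiv 0$ anyway, whereas your seventh and eighth bounds use control of $\nabla\phi,\Delta\phi$ on the transition region $B_{\rho/3}(x_0)\setminus B_{\rho/4}(x_0)$ (and, to land on $\rho^{-6}$ rather than $\rho^{-5}$, either the scale-correct bounds $|\nabla\phi|\lesssim\rho^{-1}$, $|\nabla^2\phi|\lesssim\rho^{-2}$ or the restriction $\rho\le 1$ used in the paper); you are implicitly working with this intended form of the hypotheses, and it would be worth saying so explicitly.
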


Now, we prove two important local regularity results based on the smallness of certain dimensionless quantities. As an interesting byproduct, we exclude concentration phenomena of $\mu$ and $\nu$ when $G+G_c+K$ is suitably small.

\begin{proposition} \label{ckn1}
There exists absolute constants $\epsilon>0, \kappa>0$, and for any fixed $q > 3$, there exists another constant $C=C(q)$ with the following property. If a weak solution set $(u,p,\nu,\mu)$ of the Navier-Stokes equations in $\R^6$ with a weakly solenoidal force $f \in L_{\text{loc}}^q(\R^6), q>3$ satisfies
\begin{equation} \label{ckn1_initial}
	\begin{split}
	\int_{B_1(x_0)} (|u|^3 dx + d\nu) + \int_{B_1(x_0)} |p|^{3/2} dx &\leq \epsilon, \\
	\int_{B_1(x_0)} |f|^q dx &\leq \kappa,
	\end{split}
\end{equation}
then $\|u\|_{L^{\infty}(B_{1/2}(x_0))} < C $ and $\nu|_{B_1(x_0)}= \mu|_{B_1(x_0)} = 0$.
\end{proposition}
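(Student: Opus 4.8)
The plan is to run a Caffarelli--Kohn--Nirenberg and Lin type $\epsilon$-regularity argument, carrying the defect measures $\mu,\nu$ through the iteration. First I would use the scaling invariance of \Cref{scale_invariance_sta} to take $x_0=0$, noting that the hypotheses in \eqref{ckn1_initial} are either scale invariant (those on $\int|u|^3$, $\nu$, and $\int|p|^{3/2}$) or improve when one zooms in (the force, since $\mathcal F(y,r):=r^{3q-6}\int_{B_r(y)}|f|^q\,dx$ carries a positive power of $r$ as $q>3$); the same remark applied to balls $B_\rho(y)\subset B_1(x_0)$ makes all conclusions uniform in $y$. Then one application of the local energy inequality \eqref{local_energy_2} with a cutoff supported in $B_1$ equal to $1$ on $B_{1/2}$, the pressure estimate of \Cref{pressure_estimate_2}, and H\"older's inequality ($\int_{B_1}|u|^2\lesssim\epsilon^{2/3}$, $\int_{B_1}|p|\lesssim\epsilon^{2/3}$, $\int_{B_1}|\nabla\phi|\,d\nu\le\epsilon\|\nabla\phi\|_\infty$, $\int f\cdot u\,\phi\lesssim\kappa^{1/q}\epsilon^{1/3}$ since $q'<3$) promote the smallness at scale $1$ to $\mathcal E(0,\tfrac12)\le\delta_0(\epsilon,\kappa)$ at a fixed smaller scale, where $\mathcal E(y,r):=A(y,r)+A_c(y,r)+D(y,r)+K(y,r)$ and $\delta_0\to0$ as $\epsilon,\kappa\to0$.

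The heart of the matter is an excess--decay lemma: there should be absolute constants $\theta\in(0,\tfrac12)$, $\delta_1>0$ and an exponent $\sigma>0$ such that $\mathcal E(y,r)\le\delta_1$ forces $\mathcal E(y,\theta r)\le\tfrac12\,\mathcal E(y,r)+C\,\mathcal F(y,r)^{\sigma}$. I would assemble it from four ingredients: the local energy inequality \eqref{local_energy_1}, localized at the dyadic scales $r_n=\theta^n r$ by the Dong--Strain cutoff $\phi$ and the weights $\psi_n$ of \Cref{cutoff}, to control $A(y,\theta r)+A_c(y,\theta r)$; \Cref{Sobolev_type_inequality}, which bounds $G,G_c,H$ by $A^{3/2}+D^{3/2}$, resp.\ $A_c^{3/2}$, so that these are lower order once $\mathcal E$ is small; \Cref{pressure_estimate_1} and \Cref{pressure_estimate_2}, whose self-improving estimates for the pressure quantities $L$ and $K$ supply the genuinely contractive factor; and the Poincar\'e-type bound $D(y,\theta r)\le C\theta^{-4}A(y,r)+C\theta^2 D(y,r)$. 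One then fixes $\theta$ small so the contractive terms dominate and $\epsilon,\kappa$ small so that $\delta_0<\delta_1$; iterating from scale $\tfrac12$ downward and summing the force contributions, which decay geometrically because $f\in L^q_{\mathrm{loc}}$ and $3q-6>0$, yields $\mathcal E(y,r)\le Cr^{\gamma}$ for all small $r$ and all $y$ near $x_0$, with $\gamma>0$ absolute (the constant $C$ depending on $q$ only through the force estimate).

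From $G(y,r)=r^{-3}\int_{B_r(y)}|u|^3\le Cr^{\gamma}$ one reads off a Morrey-space bound on $u$; feeding it into the pressure equation $-\Delta p=\partial_i\partial_j(u_iu_j)$ (Calder\'on--Zygmund on Morrey spaces) and then into $-\Delta u=-\divr(u\otimes u)-\nabla p+f$, a standard Morrey bootstrap gives $\|u\|_{L^\infty(B_{1/2}(x_0))}<C$ and, more generally, $u$ smooth near every point of $B_1(x_0)$. For the measures: $\nu=\sum_j\nu^j\delta_{x^j}$ is purely atomic, and at an atom $r^{-3}\nu(B_r(x^j))\to\infty$, whereas \Cref{Sobolev_type_inequality} gives $r^{-3}\nu(B_r(y))=G_c(y,r)\le C_1 A_c(y,r)^{3/2}\le C_1\delta_1^{3/2}$ by the iteration; running this at every $y\in B_1(x_0)$ (via the rescaling from the first paragraph) rules out any atom of $\nu$, or of $\mu$, in $B_1(x_0)$, so $\nu|_{B_1(x_0)}=0$. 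Finally, with $\nu|_{B_1(x_0)}=0$ and $u$ smooth, testing the equation for $v_k$ against $v_k\phi^2$ and passing to the limit --- legitimate because $v_k\to u$ strongly in $L^3_{\mathrm{loc}}$ once $\nu$ carries no mass and $p_k\to p$ weakly in $L^{3/2}_{\mathrm{loc}}$ --- reproduces the corresponding identity for the smooth limit $u$, which forces $\nabla v_k\to\nabla u$ strongly in $L^2_{\mathrm{loc}}(B_{1/2}(y))$, hence $\mu|_{B_{1/2}(y)}=0$; covering $B_1(x_0)$ by such balls finishes.

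I expect the main obstacle to be closing the excess--decay lemma, specifically absorbing the nonlocal tails in \Cref{pressure_estimate_1} --- the term $r^{9/2}\bigl(\int_{2r<|y|<\rho}|u|^2\,|y|^{-7}\,dy\bigr)^{3/2}$ and the term $(r/\rho)^{9/2}[L(\rho)+G(\rho)]$ --- by a judicious choice of the reference scale $\rho$ together with the uniform bound on $\mathcal E$ at that scale, while at the same time keeping the defect contribution $A_c$ paired with $A$ at every step. The role of the Dong--Strain cutoff and the weights $\psi_n$ in \Cref{cutoff} is precisely to produce the correct powers of $\theta$ in the localized version of \eqref{local_energy_1} so that this bookkeeping goes through; getting these exponents right, and verifying that the force term is genuinely subcritical, is where the delicacy lies.
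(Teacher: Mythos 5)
Your overall architecture is a genuinely different route from the paper's: you first transfer the scale-one smallness to $A+A_c+D+K$ at a fixed smaller scale, then run a single-scale excess--decay contraction (essentially the computation the paper performs in \Cref{ckn2}), and finally invoke a ``standard Morrey bootstrap'' to get $\|u\|_{L^\infty}<C$. The gap is in that last step, and it is not a technicality: the contraction $\mathcal E(y,\theta r)\le \tfrac12\mathcal E(y,r)+C\mathcal F^\sigma$ only yields $\mathcal E(y,r)\le Cr^\gamma$ with some small $\gamma$ (and necessarily $\gamma<2$, since the linear terms $C\theta^2\bigl(A(\rho)+D(\rho)\bigr)$ in the energy inequality cap the per-step factor at $C\theta^2$ with $C>1$; indeed $D$ itself decays no faster than $r^2$ for a bounded nonzero $u$). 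Hence $G(y,r)\le C r^{3\gamma/2}$ with $3\gamma/2<3$, which is strictly weaker than the decay needed to read off boundedness from Lebesgue differentiation, and everything then rests on the unproved bootstrap. That bootstrap --- small critical Morrey norm of $u$ at all small scales, coupled with the pressure equation and a force only in $L^q_{\mathrm{loc}}$, $q>3$, implies local boundedness --- is of essentially the same depth as \Cref{ckn1} itself; note also that ``smallness of $r^{-3}\int(|u|^3\,dx+d\nu)+r^{-3}\int|p|^{3/2}$ at all small scales'' is exactly the hypothesis of \Cref{ckn1_scaled}, so it cannot be cited to close the loop.

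The paper avoids this issue by a different iteration: with the Dong--Strain weighted cutoff $\phi\psi_n$ of \Cref{cutoff} and the energy inequality \eqref{local_energy_1} (whose partition $\phi=\sum_i\phi_i$ with per-annulus pressure constants $\gamma_i$ exists precisely for this purpose), it proves the two coupled dyadic claims $G+G_c+L\le\epsilon^{2/3}r_n^3$ and $A+A_c+D\le C_B\epsilon^{2/3}r_n^2$ at every scale $r_n=2^{-n}$ and every center in $B_{1/2}(x_0)$, handling the nonlocal tail of \Cref{pressure_estimate_1} by summing $D(r_{k-1})$ over dyadic annuli --- exactly the bookkeeping you flag as delicate but do not carry out. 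The resulting bound $r_n^{-6}\int_{B_{r_n}(x')}|u|^3\le\epsilon^{2/3}$ gives $\|u\|_{L^\infty(B_{1/2})}\le C$ directly at Lebesgue points, with no Morrey bootstrap. Your treatment of the measures is largely sound given boundedness: $\nu$ vanishes because it is purely atomic and atoms are excluded (as in the paper), and your extra argument for $\mu$ --- strong $L^3_{\mathrm{loc}}$ convergence of $v_k$ once $\nu=0$, passage to the limit in the energy identity, and strong $H^1_{\mathrm{loc}}$ convergence --- is a reasonable way to handle the possibly non-atomic part of $\mu$, arguably more detailed than the paper's one-line claim; but it is conditional on the $L^\infty$ bound, so the missing bootstrap is the decisive gap.
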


\begin{proof}[Proof of \Cref{ckn1} and \Cref{ckn1_scaled}]

Let $r_n = 2^{-n}$. The idea is to show
\begin{align}
	G_c(x',r_n) + G(x',r_n) + L(x',r_n) &\leq \epsilon^{2/3} r_n^3 \quad \forall x' \in B_{1/2}(x_0) \label{GLclaim}, \\
	A_c(x',r_n) + A(x',r_n) + D(x',r_n) &\leq C_B \epsilon^{2/3} r_n^2 \quad \forall x' \in B_{1/2}(x_0) \label{ADclaim}
\end{align}

iteratively by proving the following claims.

\begin{claim} \label{ckn1_initialclaim}
The inequality $\eqref{GLclaim}_1$ holds.
\end{claim}
\begin{claim} \label{ckn1_claim1}
$\{\eqref{GLclaim}_k\}_{1 \leq k \leq n}$ implies $\eqref{ADclaim}_{n+1}$.
\end{claim}
\begin{claim} \label{ckn1_claim2}
$\{\eqref{ADclaim}_k\}_{2 \leq k \leq n}$ implies $\eqref{GLclaim}_n$.
\end{claim}

Then we have
\begin{equation} \label{ckn1_main_estimate}
	\sup_{n \in \N, x' \in B_{1/2}(x_0)} r_n^{-6} \int_{B_{r_n}(x')} |u|^3 dx \leq \epsilon^{2/3}.
\end{equation}
This gives the uniform bound of $u$ in $B_{1/2}(x_0)$. The vanishing of concentration points is immediate by letting $n \rightarrow \infty$ around concentration points. Note that the argument can be adapted to prove \eqref{ckn1_main_estimate} for $x' \in B_{r'}(x_0)$ for any $r'<1$, thus we can exclude all concentration phenomena in the open ball $B_1$.
\par

The rest of this proof is devoted to proving the three claims.

\begin{proof}[Proof of \Cref{ckn1_initialclaim}]
This is straightforward by imposing a suitable smallness condition on $\epsilon$. 
\end{proof}

\begin{proof}[Proof of \Cref{ckn1_claim1}]
Choose the cut-off function $\phi\psi_n$ with $\rho = 1, \theta = \frac{1}{2}$ in \Cref{cutoff}. And we do the following decomposition
\[ \phi\psi_n = \phi\psi_n \eta_1 = \phi\psi_n\eta_n + \sum_{k=1}^{n-1} \phi\psi_n(\eta_k - \eta_{k+1}) \]
with cut-off functions $\{\eta_k\}_{k \in \N}$ such that
\[ 
	\begin{split}
	\eta_k =& 1 \quad \text{in} \quad B_{7r_k/8}, \\
	\eta_k =& 0 \quad \text{in} \quad \R^6 \backslash B_{r_k}, \\
	|\nabla \eta_k| \leq& Cr_k^{-1}.
	\end{split}
\]
Denote $\varphi_k := \phi\psi_n(\eta_k - \eta_{k+1})$ for $1 \leq k \leq n-1$ and $\varphi_n := \phi\psi_n\eta_n$, then
\[
	\begin{split}
	\phi\psi_n &= \sum_{k=1}^n \varphi_k, \\
	|\nabla \varphi_k| &\leq C_3 r_k^{-5}.
	\end{split}
\]
The first equality is due to the fact that $\phi\psi_n = \phi\psi_n \eta_1$, since $\phi = 0$ in $B_1 \backslash B_{1/3}$, as defined in \Cref{cutoff}. Also, the gradient bound for $\varphi$ is a simple consequence of the bounds in \Cref{cutoff}.
\par

The local energy inequality \eqref{local_energy_1} yields
\[
	\begin{split}
	- \int_{B_{1/2}} |u|^2 \phi \Delta \psi_n dx + 2 \int_{B_{1/2}} \phi \psi_n (|\nabla u|^2 dx + d\mu) \leq \int_{B_{1/2}} |u|^2 |\psi_n \Delta \phi + \nabla \phi \cdot \nabla \psi_n| dx & \\
	+ \sum_{k=1}^n \int_{B_{1/2}} |\nabla \varphi_k| (|u|^3dx + d\nu) + \sum_{k=1}^n\int_{B_{1/2}} |p-\overline{p}_{r_k}|^{3/2} |\nabla \varphi_k| dx + \int_{B_{1/2}} |u||f| \phi \psi_n dx. &
	\end{split}
\]
Then the bounds in \Cref{cutoff} gives
\begin{equation} \label{ckn1_claim1_eq1}
	\begin{split}
	C_3^{-1} r_{n+1}^{-2} [ A(r_{n+1}) + A_c(r_{n+1}) + D(r_{n+1}) ] \leq I_1 + I_2 + I_3 + I_4,
	\end{split}
\end{equation}
where
\[
	\begin{split}
	I_1 &= \int_{B_{1/2}} |u|^2 |\psi_n \Delta \phi + \nabla \phi \cdot \nabla \psi_n| dx, \\
	I_2 &= \sum_{k=1}^n \int_{B_{1/2}} |\nabla \varphi_k| (|u|^3dx + d\nu), \\
	I_3 &= \sum_{k=1}^n\int_{B_{1/2}} |p-\overline{p}_{r_k}|^{3/2} |\nabla \varphi_k| dx, \\
	I_4 &= \int_{B_{1/2}} |u||f| \phi \psi_n dx.
	\end{split}
\]
\par
For the term $I_1$, we use the bound in \Cref{cutoff} and H{\"o}lder inequality, i.e.
\[ I_1 \leq C_3 \int_{B_{1/2}} |u|^2 dx \leq C_3 \Big( \int_{B_{1/2}} |u|^2 dx \Big)^{2/3} \leq C_3 \epsilon^{2/3}. \]
\par
For the term $I_2+I_3$, we use the gradient bounds for $\{\varphi_k\}_{1 \leq k \leq n}$ and induction hypothesis $\{\eqref{GLclaim}_k\}_{1 \leq k \leq n}$, i.e.
\[
	\begin{split}
	I_2 + I_3 
		&\leq \sum_{k=1}^n r_k^{-5} \int_{B_{r_k}} (|u|^3dx + d\nu) + \sum_{k=1}^n r_k^{-5} \int_{B_{r_k}} |p-\overline{p}_{r_k}|^{3/2} dx \\
		&\leq \sum_{k=1}^n r_k \epsilon^{2/3}.
	\end{split}
\]
\par
For the term $I_4$, we need to decompose the integral over $B_{1/2}$ into integrals over annuli, then for each subintegral we use bounds in \Cref{cutoff}, i.e.
\[
	\begin{split}
	I_4 &= \sum_{k=2}^n \int_{ B_{r_{k-1}} \backslash B_{r_k} } |u||f| \phi \psi_n dx + \int_{B_{r_n}} |u||f| \phi \psi_n dx \\
		&\leq C_3 \sum_{k=1}^n r_k^{-4} \Big( \int_{ B_{r_k} } |u|^3 dx \Big)^{1/3} 
			\Big( \int_{ B_{r_k} } |f|^q dx \Big)^{1/q} \Big( \int_{ B_{r_k} } 1 dx \Big)^{2/3-1/q} \\
		&\leq C_3 \sum_{k=1}^n r_k^{2-6/q} \epsilon^{2/9} \kappa^{1/q}.
	\end{split}
\]
We choose $\kappa = \epsilon^{4/9}$. Because $q>3$, $2-\frac{6}{q}>0$, then
\[ I_4 \leq C_3C(q) \epsilon^{2/3}. \]
\par
Now we plug the estimates for $I_1,I_2,I_3$ and $I_4$ into \eqref{ckn1_claim1_eq1}, which concludes the proof.
\end{proof}

\begin{proof}[Proof of \Cref{ckn1_claim2}]

For simplicity, let $x_0=0$, then \Cref{Sobolev_type_inequality} yields for any $2 \leq k \leq n$,
\begin{equation} \label{ckn1_estimate_7}
	G(r_k) + G_c(r_k) \leq C_1A^{3/2}(r_k) + C_1A_c^{3/2}(r_k) + C_1D^{3/2}(r_k) \leq C_1C_B \epsilon r_k^3.
\end{equation}
\par
To bound $L(r_n)$, we use the pressure estimate \Cref{pressure_estimate_1} and let $\rho=r_1$, $r=r_n$, i.e.
\[
	\begin{split}
	L(r_n) 
	&\leq C_2 G(r_{n-1}) + C_2 r_n^{9/2} \Big( \int_{r_{n-1}<|y|<r_1} \frac{|u|^2}{|y|^7} dy \Big)^{3/2}
		+ C_2 \Big(\frac{r_n}{r_1}\Big)^{9/2} [L(r_1) + G(r_1)] \\
	&\leq C_1 C_2 C_B r_{n-1}^3 \epsilon + C_2 r_n^{9/2} \Big( \sum_{k=2}^{n-1} \int_{r_k<|y|<r_{k-1}} \frac{|u|^2}{|y|^7} dy \Big)^{3/2}
		+ C_2 \Big(\frac{r_n}{r_1}\Big)^{9/2} \epsilon \\
	&\leq 8C_2(C_1C_B+1) r_n^3 \epsilon + C_2 r_n^{9/2} \Big( \sum_{k=2}^{n-1} 16r_k^{-3} D(r_{k-1}) \Big)^{3/2} \\
	&\leq 8C_2(C_1C_B+1) r_n^3 \epsilon + C_2 r_n^{9/2} \Big( \sum_{k=2}^{n-1} C_B \epsilon^{2/3} r_k^{-1} \Big)^{3/2} \\
	&\leq 8C_2(C_1C_B+1+C_B^{3/2}) r_n^3 \epsilon.
	\end{split}
\]
In the second inequality, the first term follows from the initial smallness condition \eqref{ckn1_initial} when $n=2$, from \eqref{ckn1_estimate_7} when $n>2$, and the third term is always due the the initial smallness condition \eqref{ckn1_initial}. In the fourth inequality, we use the assumptions $\{\eqref{ADclaim}_k\}_{2 \leq k \leq n}$.
\par
Combining the estimates yields
\[ G(r_n) + G_c(r_n) + L(r_n) \leq \big[ 8C_2(C_1C_B+1+C_B^{3/2}) + C_1 C_B \big] r_n^3 \epsilon, \]
then we choose $\epsilon > 0$ such that
\[ \big[ 8C_2(C_1C_B+1+C_B^{3/2}) + C_1 C_B \big] \epsilon^{1/3} < 1. \]
This concludes the proof.

\end{proof}

\end{proof}

Next we show the second local regularity result.

\begin{proposition} \label{ckn2}
There exists an absolute constant $\tau>0$ with the following property. Suppose that we have a weak solution set $(u,p,\nu,\mu)$ of the Navier-Stokes equations in $\R^6$ satisfying
\begin{equation}
	\begin{split}
	\limsup_{r \rightarrow 0} \frac{1}{r^2} \Big[ \int_{B_r(x_0)} |\nabla u|^2 dx + \int_{B_r(x_0)} d\mu \Big] &\leq \tau
	\end{split}
\end{equation}
for a point $x_0 \in \R^6$, then $\|u\|_{L^{\infty}(B_{r_0}(x_0))} < C $ for some $r_0 > 0$ which depends on $x_0$. Moreover, the concentration measures $\mu$ and $\nu$ vanish in $B_{r_0}(x_0)$.
\end{proposition}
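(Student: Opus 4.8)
The plan is to reduce \Cref{ckn2} to the scaled first criterion \Cref{ckn1_scaled}, by showing that the hypothesis forces the dimensionless quantities $G$, $G_c$ and the scaled pressure quantity $K$ (together with the scaled force) to be small at some sufficiently small scale around $x_0$, after which \Cref{ckn1_scaled} applies verbatim. Throughout I take $x_0=0$ and recall the standing assumption $f\in L^q_{\mathrm{loc}}(\R^6)$, $q>3$, as in \Cref{mainregularity}. To begin, the $\limsup$ hypothesis is exactly the statement that there is a scale $\rho_0>0$ with
\[ A(0,r)+A_c(0,r)\le 2\tau \qquad \text{for all } 0<r\le\rho_0, \]
and then \Cref{Sobolev_type_inequality} already gives $G_c(0,r)\le C_1(2\tau)^{3/2}$ on this range. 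What is still missing for $G$ is the scaled $L^2$ quantity $D$, and for the pressure it is $K$, and neither is controlled by the hypothesis directly.

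Both, however, can be bootstrapped from the smallness of $A$ through contractive iterations. For $D$, a purely Poincar\'{e}-type computation --- split $u=(u-\overline{u}_{0,r})+\overline{u}_{0,r}$ on $B_r$, apply Poincar\'{e} to the oscillation and $|\overline{u}_{0,r}|^2\le c\,r^{-2}D(0,r)$ to the mean --- yields, for every $\theta\in(0,\tfrac12]$,
\[ D(0,\theta r)\le C\theta^{-4}A(0,r)+C\theta^2 D(0,r), \]
and I will fix $\theta$ small enough (absolutely, in terms of $C$ and the constant $C_2$ of \Cref{pressure_estimate_2}) that $C\theta^2\le\tfrac12$ and $C_2\theta^3\le\tfrac12$. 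Iterating over $r_n=\theta^n\rho_0$ with $A(0,r_n)\le 2\tau$ gives $D(0,r_n)\le 2^{-n}D(0,\rho_0)+C'\tau$, and since $D(0,\rho_0)<\infty$ (as $u\in\dot H^1(\R^6)\hookrightarrow L^3(\R^6)\subset L^2_{\mathrm{loc}}$) we get $D(0,r)\le C''\tau$ for all $r$ below some $\rho_1\le\rho_0$. The same iteration applied to the pressure inequality $K(0,\theta r)\le C_2\theta^{-3}A^{3/2}(0,r)+C_2\theta^3 K(0,r)$ of \Cref{pressure_estimate_2} (using $K(0,\rho_0)<\infty$, since $p\in L^{3/2}(\R^6)$) gives $K(0,r)\le C'''\tau^{3/2}$ for all $r$ below some $\rho_2\le\rho_0$.

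Combining these with \Cref{Sobolev_type_inequality} once more, for every $r\le\min(\rho_1,\rho_2)$ one has
\[ G(0,r)+G_c(0,r)+K(0,r)\le C_1\big[(2\tau)^{3/2}+(C''\tau)^{3/2}\big]+C_1(2\tau)^{3/2}+C'''\tau^{3/2}\le \tilde C\,\tau^{3/2}, \]
while $r^{3q-6}\int_{B_r(0)}|f|^q\,dx\to0$ as $r\to0$, since $3q-6>0$ and $\int_{B_r}|f|^q\to0$ by absolute continuity. Hence, choosing first $\tau$ small (absolutely, so that $\tilde C\tau^{3/2}\le\epsilon$) and then a single radius $r_*\le\min(\rho_1,\rho_2)$ small enough that also $r_*^{3q-6}\int_{B_{r_*}}|f|^q\le\kappa$, the weak solution set $(u,p,\nu,\mu)$ satisfies the hypotheses \eqref{ckn1_scaled_initial} of \Cref{ckn1_scaled} at $x_0$ with radius $r_*$. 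That corollary then delivers $\|u\|_{L^\infty(B_{r_*/2}(x_0))}<C$ and $\nu|_{B_{r_*}(x_0)}=\mu|_{B_{r_*}(x_0)}=0$; taking $r_0:=r_*/2$ proves the proposition, the vanishing of $\mu$ and $\nu$ persisting on $B_{r_0}(x_0)\subset B_{r_*}(x_0)$.

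The point that needs care is that one should \emph{not} iterate the local energy inequality \eqref{local_energy_2} at consecutive scales directly: the term $\int|u|^2|\Delta\phi|\,dx$ there contributes a factor $\theta^{-2}D(0,r)$ which is \emph{not} contractive in the natural combined quantity $A+A_c+D$. The resolution, and the only genuinely non-routine step, is the observation in the second paragraph that at small scales $D$ is already slaved to $A$ through a contractive estimate that does not use the equation at all; once $D$ and $K$ are known to be small, the statement collapses onto the already-established criterion \Cref{ckn1_scaled}. The remaining care is purely in the order of quantifiers --- $\theta$ is pinned down first from absolute constants, and only afterwards is $\tau$ chosen --- so that $\tau$ genuinely is an absolute constant as claimed, with $r_0$ depending on $x_0$ only through the scales $\rho_0,\rho_1,\rho_2$ and the decay of $\int_{B_r(x_0)}|f|^q$.
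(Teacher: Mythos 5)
Your argument is correct, but it follows a genuinely different route from the paper's. The paper proves \Cref{ckn2} by iterating the second local energy inequality \eqref{local_energy_2}, tested with the Dong--Strain cutoff $\phi\psi_1$ of \Cref{cutoff}; combined with \Cref{pressure_estimate_2} this gives a one-step decay estimate for the combined quantity $E=A+A_c+D+\theta^{-8}K^{4/3}$, whose error terms ($F_2$, $A^2$, $A_c^{3/2}$) are made small via the $\limsup$ hypothesis, and smallness of $G+G_c+K$ at some scale then follows from \Cref{Sobolev_type_inequality}, after which \Cref{ckn1_scaled} is applied exactly as you do. You instead exploit that the hypothesis already gives $A+A_c\le 2\tau$ at \emph{every} sufficiently small radius, so nothing needs to be propagated through the energy inequality: $D$ is slaved to $A$ by the elementary Poincar\'e/Jensen contraction $D(\theta r)\le C\theta^{-4}A(r)+C\theta^{2}D(r)$ (which checks out with the stated exponents in $\R^6$), and $K$ is slaved to $A$ by \Cref{pressure_estimate_2} alone; both iterations start because $D(\rho_0),K(\rho_0)<\infty$ from $u\in\dot H^1(\R^6)$, $p\in L^{3/2}(\R^6)$, and your quantifier order ($\theta$ fixed from absolute constants first, then $\tau$, with $r_0$ depending on $x_0$ only through $\rho_0,\rho_1,\rho_2$ and $f$) is handled correctly. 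What each approach buys: yours is shorter and avoids \eqref{local_energy_2} (hence the concentration terms) entirely in this step, but it is special to the stationary problem, where $D$ can be controlled without using the equation; the paper's energy-inequality iteration is the scheme that parallels the nonstationary argument of \cite{wu2021partially}. One side remark: your stated obstruction to iterating \eqref{local_energy_2} directly --- that $\int|u|^2|\Delta\phi|\,dx$ produces a non-contractive $\theta^{-2}D(\rho)$ term --- is not what the paper actually faces: with the cutoff $\phi\psi_n$ the Laplacian in that term falls on $\phi$, which lives at the outer scale $\rho$, so the corresponding term $I_1$ carries a favorable $\theta^{2}$ factor and the paper's iteration is contractive. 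This mischaracterization does not affect the validity of your own proof.
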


\begin{proof}
Without loss of generality, fix $x_0=0 \in \R^6$. For $\rho>0$, choose the cut-off function $\phi \psi_1 \in C_c^{\infty}(B_{\rho}(x_0))$ with $\rho \in (0, \frac{1}{2})$ and $\theta \in (0,\frac{1}{2}]$, then we get the following local energy inequality from \eqref{local_energy_2},
\[
	\begin{split}
	- \int_{B_{\rho}} |u|^2 \phi \Delta \psi_1 dx + 2 \int_{B_{\rho}} \phi \psi_1 (|\nabla u|^2 dx + d\mu) \leq \int_{B_{\rho}} |u|^2 |\psi_1 \Delta \phi + \nabla \phi \cdot \nabla \psi_1| dx & \\
	+ \int_{B_{\rho}} |\nabla (\phi \psi_1)| (|u-\overline{u}_{\rho}|^3dx + d\nu) + \int_{\R^6} |u|^2 (u \cdot \nabla) (\phi \psi_1) dx &\\
	+ \int_{\R^6} |p| |u| |\nabla (\phi \psi_1)| dx + \int_{B_{\rho}} |u||f| \phi \psi_1 dx &.
	\end{split}
\]
This yields
\begin{equation} \label{ckn2_eq1}
	\begin{split}
	C_3^{-1} (\theta \rho)^{-2} [ A(\theta \rho) + A_c(\theta \rho) + D(\theta \rho) ] \leq I_1 + I_2 + I_3 + I_4 + I_5,
	\end{split}
\end{equation}
where
\[
	\begin{split}
	I_1 &= \int_{B_{\rho}} |u|^2 |\psi_1 \Delta \phi + \nabla \phi \cdot \nabla \psi_1| dx, \\
	I_2 &= \int_{B_{\rho}} |\nabla (\phi \psi_1)| (|u-\overline{u}_{\rho}|^3dx + d\nu), \\
	I_3 &= \int_{\R^6} |u| |p| |\nabla (\phi \psi_1)| dx, \\
	I_4 &= \int_{B_{\rho}} |u||f| \phi \psi_1 dx, \\
	I_5 &= \int_{\R^6} |u|^2 (u \cdot \nabla) (\phi \psi_1) dx.
	\end{split}
\]
\par
For $I_1,I_3$ and $I_4$, we simply use H{\"o}lder’s inequality and the bounds in \Cref{cutoff}, i.e.
\begin{equation} \label{ckn2_eq2}
	\begin{split}
	(\theta \rho)^2 I_1 &\leq C \theta^2 G^{2/3}(\rho), \\
	(\theta \rho)^2 I_3 &\leq C \theta^{-3} G^{1/3}(\rho) K^{2/3}(\rho), \\
	(\theta \rho)^2 I_4 &\leq C \theta^{-2} G^{1/3}(\rho) F_2^{1/2}(\rho).
	\end{split}
\end{equation}
\par

For $I_2$, \Cref{Sobolev_type_inequality} yields
\begin{equation} \label{ckn2_eq3}
	(\theta \rho)^2 I_2 \leq C \theta^{-3} [ A^{3/2}(\rho) + A_c^{3/2}(\rho) ].
\end{equation}
\par

For $I_5$, we have
\begin{equation} \label{ckn2_eq4}
	\begin{split}
	I_5 =& \int_{\R^6} |u - \overline{u}_{\rho}|^2 \big[ (u-\overline{u}_{\rho}) \cdot \nabla \big] (\phi \psi_1) dx 
		+ \int_{\R^6} |u - \overline{u}_{\rho}|^2 (\overline{u}_{\rho} \cdot \nabla) (\phi \psi_1) dx \\
    	&+ 2\int_{\R^6} \big[(u-\overline{u}_{\rho}) \cdot \overline{u}_{\rho} \big] (u \cdot \nabla) (\phi \psi_1) dx \\
      \leq& I_2 + \int_{\R^6} |u - \overline{u}_{\rho}|^2 (\overline{u}_{\rho} \cdot \nabla) (\phi \psi_1) dx
    	+ 2\int_{\R^6} \big[(u-\overline{u}_{\rho}) \cdot \overline{u}_{\rho} \big] (u \cdot \nabla) (\phi \psi_1) dx \\
    =& I_2 - \int_{\R^6} (\overline{u}_{\rho} \cdot \nabla)u \cdot (u - \overline{u}_{\rho}) \phi \psi_1 dx
    	- 2\int_{\R^6} (u \cdot \nabla)u \cdot \overline{u}_{\rho} \phi \psi_1 dx \\
    \leq& I_2 + 3 \rho^{-6} \Big( \int_{B_{\rho}} |u|^3 dx \Big)^{1/3} \Big( \int_{B_{\rho}} |\nabla u|^2 dx \Big)^{1/2} \Big( \int_{B_{\rho}} |u|^2 dx \Big)^{1/2} \\
    \leq& I_2 + 3 \rho^{-2} G^{1/3}(\rho) A^{1/2}(\rho) D^{1/2}(\rho).
  \end{split}
\end{equation}
\par

Plug \eqref{ckn2_eq2}, \eqref{ckn2_eq3} and \eqref{ckn2_eq4} into \eqref{ckn2_eq1}, we get
\begin{equation} \label{ckn2_eq5}
	\begin{split}
	A(\theta \rho) + A_c(\theta \rho) + D(\theta \rho)
		\leq& C \theta^2 G^{2/3}(\rho) + C \theta^{-2} G^{1/3}(\rho) F_2^{1/2}(\rho)
				 + C \theta^{-3} G^{1/3}(\rho) K^{2/3}(\rho) \\
			&+ C \theta^{-3} [ A^{3/2}(\rho) + A_c^{3/2}(\rho) ] + C \theta^2 G^{1/3}(\rho) A^{1/2}(\rho) D^{1/2}(\rho) \\
		\leq& C \theta^2 G^{2/3}(\rho) + C \theta^{-8} K^{4/3}(\rho) + C \theta^{-6} F_2(\rho) \\
			&+ C \theta^{-3} [ A^{3/2}(\rho) + A_c^{3/2}(\rho) ] + C \theta^2 A(\rho) D(\rho)\\
		\leq& C \theta^2 A(\rho) + C \theta^2 D(\rho) + C \theta^{-8} K^{4/3}(\rho) + C \theta^{-6} F_2(\rho) \\
			&+ C \theta^{-3} [ A^{3/2}(\rho) + A_c^{3/2}(\rho) ].
	\end{split}
\end{equation}
In the second inequality, we use Young's inequality to combine the terms involving $G$. The third inequlity follows from Sobolev inequality in \Cref{Sobolev_type_inequality}. Also, we absorb $C \theta^2 A(\rho) D(\rho)$ into $C \theta^2 D(\rho)$ since $A(\rho)$ is bounded from the definition.
\par

Note that \Cref{pressure_estimate_2} yields
\begin{equation} \label{ckn2_eq4}
	\begin{split}
	\theta^{-9} K^{4/3}(\theta \rho) &\leq C_2 \theta^{-13} A^2(\rho) + C_2 \theta^{-5} K^{4/3}(\rho).
	\end{split}
\end{equation}
Taking the sum of \eqref{ckn2_eq3} and \eqref{ckn2_eq4} yields
\begin{equation} \label{ckn2_eq5}
	E(\theta \rho) \leq C \theta E(\rho) + C \theta^{-6} F_2(\rho) + C_2 \theta^{-13} A^2(\rho) + C \theta^{-3} A_c^{3/2}(\rho).
\end{equation}
where $E(x_0,\rho) := A(x_0,\rho) + A_c(x_0,\rho) + D(x_0,\rho) + \theta^{-8} K^{4/3}(\rho)$. Here we use the fact $\theta \in (0,\frac{1}{2})$ to combine some terms of lower order.
\par

Finally, we choose $\theta$ small enough such that $C\theta<\frac{1}{2}$. Since
\[ \limsup_{r \rightarrow 0} A(r)+A_c(r) \leq \tau, \quad \lim_{r \rightarrow 0} F_2(r) =0, \]
we can choose $\tau$ suitably small such that there exists $\rho_0>0$ such that for any $\rho \in (0, \rho_0)$,
\[ C \theta^{-6} F_2(\rho) + C_2 \theta^{-13} A^2(\rho) + C \theta^{-3} A_c^{3/2}(\rho) \leq \frac{\tau'}{2} \]
with $\tau'$ to be determined. Therefore, \eqref{ckn2_eq5} becomes
\[ E(\theta \rho) \leq \frac{1}{2} E(\rho) + \frac{\tau'}{2} \quad \text{for any } \rho \in (0, \rho_0). \]
We could choose $k_0$ large enough such that
\[ E(\theta^{k_0} \rho_0) \leq \frac{1}{2^{k_0}} E(\rho) + \tau' \leq 2 \tau'. \]
Note that we can bound $G+G_c$ by $A+A_c+D$ by means of \Cref{Sobolev_type_inequality}, then we can bound $G(\theta^k_0 \rho_0)+G_c(\theta^k_0 \rho_0)+K(\theta^k_0 \rho_0)$ by $E(\theta^k_0 \rho_0)$. Now we can choose $\tau'$ small enough to ensure the smallness condition \eqref{ckn1_initial}, then we apply \Cref{ckn1_scaled} to conclude the proof. Note that $3q-6>0$, so the smallness condition about $f$ will be satisfied for certain radius anyway.
\end{proof}

Finally, we estimate the size of the singular set of weak solution sets. We give a definition to the singular set of weak solution sets.

\begin{definition} \label{singularset}
Suppose that $(u,p,\nu,\mu)$ is a weak solution set of the stationary Navier-Stokes equation in $\R^6$. A point $x_0 \in \R^6$ is called a regular point if $u \in L^{\infty}(B_r(x_0))$ for some $r>0$. Otherwise, $x_0$ is called a singular point. The singular set is the set of all singular points.
\end{definition}

\begin{proof}[Proof of \Cref{mainregularity} and \Cref{maintheorem}]
\Cref{mainregularity} follows from \Cref{ckn2} a standard covering argument and the fact
\[ \int_{\R^6} |\nabla u|^2 dx + \int_{\R^6} d \mu \leq \liminf_{k \rightarrow \infty} \int_{\R^6} |\nabla u_k|^2 dx < \infty. \]
For the standard covering argument, we refer to \cite{caffarelli1982partial} or \cite{wu2021partially}.
\par

For \Cref{maintheorem}, the existence is proved in \Cref{preparation_existence}. \Cref{remark_distribution_solution} shows that $(u,p)$ satisfies the stationary Navier-Stokes equations in the sense of distributions. \Cref{measure_convergence_3} proves the local energy inequalities. The partial regularity of $u$ is contained in \Cref{mainregularity}.
\end{proof}

\renewcommand{\bibname}{References}
\bibliography{bibliography}
\bibliographystyle{abbrv}

\end{document}